\setlist[enumerate,1]{label={\normalfont(\roman*)}}
 \newcommand*\patchAmsMathEnvironmentForLineno[1]{
   \expandafter\let\csname old#1\expandafter\endcsname\csname #1\endcsname
   \expandafter\let\csname oldend#1\expandafter\endcsname\csname end#1\endcsname
   \renewenvironment{#1}
     {\linenomath\csname old#1\endcsname}
     {\csname oldend#1\endcsname\endlinenomath}}
 \newcommand*\patchBothAmsMathEnvironmentsForLineno[1]{
   \patchAmsMathEnvironmentForLineno{#1}
   \patchAmsMathEnvironmentForLineno{#1*}}
 \newcommand{\supp}{\mathrm{supp\,}}
\newcommand{\R}{\mathbb{R}}
\newcommand{\RN}{\mathbb{R}^N}
\newcommand{\N}{\mathbb{N}}
\newcommand{\e}{\varepsilon}
\newcommand{\rd}{\mathrm{d}}
\def\dist{\mathrm{dist}}
\def\A#1{A(\boldsymbol x,{#1})}
\theoremstyle{definition}
\newtheorem{definition}{Definition}[section]
\theoremstyle{plain}
\newtheorem{theorem}[definition]{Theorem}
\newtheorem{corollary}[definition]{Corollary}
\newtheorem{lemma}[definition]{Lemma}
\newtheorem{proposition}[definition]{Proposition}
\newtheorem{remark}[definition]{Remark}
\newtheorem*{notation}{Notation}
\numberwithin{equation}{section}
\title
	{\bf Multi-bump solutions for sublinear elliptic equations with nonsymmetric coefficients}
  \author[a]{Chengxiang Zhang\thanks{  zcx@bnu.edu.cn}}
  \author[b]{Xu Zhang\thanks{ zhangxu0725@csu.edu.cn, darkblue1121@163.com, corresponding author  }}
  \affil[a]{\footnotesize Laboratory of Mathematics and Complex Systems (Ministry of Education), School of Mathematical Sciences, 
  
  Beijing Normal University, Beijing 100875, P. R. China}
  \affil[b]{\footnotesize  School of Mathematics and Statistics, HNP-LAMA, Central South University, Changsha 410083, P. R. China}
  \date{}
\begin{document}
  \maketitle
  \begin{minipage}{16.5cm}
    {\small {\bf Abstract:}
We investigate the existence of nonnegative bump solutions to the sublinear elliptic equation 
\[
\begin{cases}
-\Delta v - K(x)v + |v|^{q-2}v = 0 & \text{in } \mathbb{R}^N, \\
v(x) \to 0 & \text{as } |x| \to \infty,
\end{cases}
\]  
where \( q \in (1,2) \), \( N \geq 2 \), and the potential \( K \in L^p_{\mathrm{loc}}(\mathbb{R}^N) \) with \( p > N/2 \) is a function without any symmetry assumptions.  
Under the condition that \( \|K - 1\|_{L^p_{\mathrm{loc}}} \) is sufficiently small, we construct infinitely many solutions with  arbitrarily many  bumps.
The construction is challenged by the sensitive interaction between bumps, whose limiting profiles have compact support. The key to ensuring their effective separation lies in obtaining sharp estimates of the support sets. Our method, based on a truncated functional space, provides precisely such control. We derive qualitative local stability estimates in region-wise maximum norms that govern the size of each bump's essential support, confining its core to a designated region and minimizing overlap. Crucially, these estimates are uniform in the number of bumps, which is the pivotal step in establishing the existence of solutions with infinitely many bumps.

    \medskip {\bf Keywords:}  sublinear elliptic equation;   infinitely many solutions; solution with infinitely many bumps.
    
    \medskip {\bf Mathematics Subject Classification:} 35J20 $\cdot$ 35J15 $\cdot$ 35J60
    }
    
    \end{minipage}

\section{Introduction and main Results}
We study the existence of nonnegative bump solutions of the following
sublinear Schr\"odinger equation
\begin{equation}\label{1.1} 
\begin{cases}
-\Delta v - K(x)v + |v|^{q-2}v = 0 & \text{in } \mathbb{R}^N, \\
v(x) \to 0 & \text{as } |x| \to \infty,
\end{cases}
\end{equation}
where $q \in (1,2)$, $N \geq 2$, and $K$ is a nonnegative function with no assumed symmetry. A distinctive feature of such sublinear elliptic
equations is that their nontrivial solutions often exhibit compact support.
This phenomenon is closely related to the study of blow-up sets in the porous
medium equation with a source term:
\begin{equation}\label{porousk}
u_t = \Delta u^m + \bar{K}(x) u^m,
\end{equation}
with $m = 1/(q-1) > 1$, where $\bar{K}(x) u^m$ acts as a spatially heterogeneous
source term. When $K \equiv 1$, this reduces to the homogeneous equation
\begin{equation}\label{porous}
u_t = \Delta u^m + u^m,
\end{equation}
whose solutions, for nonnegative compactly supported initial data, undergo
regional blow-up.
In \cite{Cortazar1998,Cortazar2002}, Cort\'azar, del Pino, and Elgueta proved
that for such initial data, the solution $u(x,t)$ of \eqref{porous} satisfies
\[
(\bar{T} - t)^{\frac{1}{m-1}} u(x,t) \to \sum_{i=1}^k w_*(|x - x_i|)
\quad \text{uniformly as } t \to \bar{T},
\]
where $\bar{T}$ is the blow-up time, $w_*$ is the unique nonnegative,
nontrivial radial solution of
\(
\Delta w_*^m + w_*^m = \frac{1}{m-1} w_*
\)
with support in a ball of radius $R^*$, and the points $x_1,\cdots,x_k \in
\mathbb{R}^N$ satisfy $\min_{i \neq j} |x_i - x_j| \geq 2R^*$.
By the change of variable
\(
w = (m-1)^{\frac{m}{m-1}} w_*^m,
\)
the profile $w$ satisfies the elliptic equation
\begin{equation}\label{1.1infty}
\begin{cases}
-\Delta v -  v + v^{q-1} = 0 & \text{in } \mathbb{R}^N, \\
v(x) \to 0 & \text{as } |x| \to \infty,
\end{cases}
\end{equation}
which coincides with \eqref{1.1} for $K \equiv 1$.
The basic properties of the nonnegative ground state\footnote[1]{ As in \cite{Zhang2025}, a ground state solution $ u $ refers to a solution that does not change sign, vanishes at infinity, and is such that the set $ \{x \mid u(x)\neq 0\} $ is connected.}  to \eqref{1.1infty}, including the uniqueness,  radial symmetry, and the structure of its
support, have been established in
\cite{Cortazar2002, ikomaNonlinearEllipticEquations2022,
guiSymmetryBlowsetPorous1995, serrinSymmetryGroundStates1999,
serrinUniquenessGroundStates2000}.
We summarize some of the  properties that will be used in this paper as follows:
\begin{lemma}\label{lemmaR*}
  Let $w$ be the nonnegative ground state of \eqref{1.1infty} with $w(0)=\max_{x\in \R^N} w(x)$. Then
 \begin{enumerate}
  \item $w\in C^2(\R^N)$, and there is $R^*>0$ such that $\supp w = \overline{B(0, R^*)}$.
  \item $w(x)=w(|x|)$ is strictly decreasing in $r=|x|$ for $r\in [0, R^*]$.
  \item  $w(R^*)=w'(R^*)=w''(R^*)=0$. Moreover,  $w(x) (R^*-|x|)^{-\frac{2}{2-q}}\to C_q$, as $|x|\to R^*$, where \[C_q :=\left(\frac{(2-q)^2}{2q}\right)^\frac{1}{2-q}.\]
  \item $w$ is the unique nonnegative ground state solution to 
  \eqref{1.1infty} up to translations in $\R^N$, and it is
  {\it nondegenerate} in the sense that if there exists
  $\psi\in H^1(B(0, R^*))$ satisfying $\int_{B(0, R^*)} w^{q-2}\psi^2<+\infty$
and 
\[
-\Delta \psi -\psi + (q-1) w^{q-2} \psi=0 \mbox{ in } B(0,R^*),
\]
then $\psi=\nabla w\cdot \lambda$  for some   $\lambda\in\R^N$ in $B(0,R^*)$ .
\item $w$ is the unique positive solution to the boundary value problem
\[
\begin{cases}
-\Delta u - u + u^{q-1} = 0 & \text{in } B(0, R^*), \\
u = 0 & \text{on } \partial B(0, R^*).
\end{cases}
\]
 \end{enumerate}
\end{lemma}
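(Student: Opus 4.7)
The lemma collects properties of the limiting ground state that are already essentially available in the literature, so my approach is an item-by-item verification against known results, using the radial ODE
\[
-w''-\frac{N-1}{r}w'-w+w^{q-1}=0,\qquad r\in(0,R^*),
\]
as the common workhorse.

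For items (i)–(ii) I would combine the ``dead-core'' phenomenon for sublinear nonlinearities with moving-plane / Schwarz symmetrization. The strong maximum principle fails for $-\Delta-\mathrm{Id}+(\cdot)^{q-1}$ when $q<2$, forcing any nonnegative ground state to vanish on an open set. Combined with the symmetry results in \cite{guiSymmetryBlowsetPorous1995,serrinSymmetryGroundStates1999,Cortazar2002}, this yields, after translating the maximum to $0$, that $\supp w=\overline{B(0,R^*)}$ for some $R^*>0$, that $w$ is radial, and that $r\mapsto w(r)$ is strictly decreasing on $[0,R^*]$. Interior $C^2$ regularity is routine elliptic bootstrap, and smoothness across $\partial B(0,R^*)$ will follow from (iii).

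For item (iii), the key is a local ODE analysis at $r=R^*$. Since $w(R^*)=0$ and $q-1<1$, the nonlinear term $w^{q-1}$ dominates the linear term $w$ near the boundary, while the first-order drift $\tfrac{N-1}{r}w'$ is subdominant provided the decay power exceeds $2$. Inserting the ansatz $w(r)=C(R^*-r)^{\alpha}(1+o(1))$ into the leading balance $w''=w^{q-1}+\text{lower order}$ matches exponents at $\alpha=2/(2-q)$ and matches coefficients at
\[
C^{2-q}=\frac{1}{\alpha(\alpha-1)}=\frac{(2-q)^2}{2q},
\]
which is precisely $C=C_q$. Since $\alpha>2$ for $q\in(1,2)$, one obtains $w'(R^*)=w''(R^*)=0$, confirming the $C^2$ matching across $\partial B(0,R^*)$.

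For items (iv)–(v), uniqueness up to translation reduces by (ii) to uniqueness on $[0,R^*]$, which is the classical Serrin--Tang result for sublinear radial Dirichlet problems \cite{serrinUniquenessGroundStates2000}; the same ODE uniqueness combined with Gidas--Ni--Nirenberg moving planes on the ball yields (v). The nondegeneracy in (iv) is the delicate point: the linearization $-\Delta-1+(q-1)w^{q-2}$ has a coefficient blowing up at $\partial B(0,R^*)$, and the weighted hypothesis $\int w^{q-2}\psi^2<\infty$ singles out the correct energy space in which the kernel is spanned by $\partial_{x_1}w,\dots,\partial_{x_N}w$. This is the main obstacle of the proof, and it is where I would rely most heavily on the recent treatments in \cite{ikomaNonlinearEllipticEquations2022,Zhang2025}, checking carefully that their functional setup matches the statement here.
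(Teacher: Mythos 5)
Your proposal is consistent with how the paper treats this lemma: Lemma~\ref{lemmaR*} is stated without proof, with a pointer to the cited literature \cite{Cortazar2002,ikomaNonlinearEllipticEquations2022,guiSymmetryBlowsetPorous1995,serrinSymmetryGroundStates1999,serrinUniquenessGroundStates2000,Zhang2025}, and your references and the division of labor among them match. Your ODE matching in item (iii) checks out: with $w(r)\sim C(R^*-r)^\alpha$ the dominant balance $w''\sim w^{q-1}$ (the terms $w$ and $\tfrac{N-1}{r}w'$ being of higher order once $\alpha>2$) forces $\alpha(2-q)=2$ and $C^{2-q}=\bigl(\alpha(\alpha-1)\bigr)^{-1}=\tfrac{(2-q)^2}{2q}$, i.e.\ $C=C_q$. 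One small logical slip: the \emph{failure} of the strong maximum principle merely \emph{permits} a dead core, it does not by itself \emph{force} compact support; that conclusion requires a separate comparison or energy argument for decaying solutions, which is exactly what the cited works supply, so this does not affect the soundness of the literature-based plan.
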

Analogously, for a solution   $u(x,t)$ of \eqref{porousk}, if the rescaled quantity
$(\bar{T} - t)^{\frac{1}{m-1}} u(x,t)$ converges as $t \to \bar{T}$, its limit
must be a solution of \eqref{1.1} (up to the above scaling transformation).
Consequently, the compact support of the self-similar blow-up profiles for the
parabolic problem \eqref{porousk} corresponds precisely to the compactly
supported solutions of the elliptic equation~\eqref{1.1}. This correspondence motivates the construction
of multibump solutions to \eqref{1.1} as a first step toward understanding
general blow-up sets.

Equation \eqref{1.1} serves as the sublinear analogue of the superlinear scalar
field equation
\begin{equation}\label{super}
-\Delta u + a(x)u = |u|^{p-2}u, \quad u \in H^1(\mathbb{R}^N),
\end{equation}
with a positive potential $a$. The existence and multiplicity of solutions to \eqref{super} have been
extensively studied since the pioneering work of Berestycki and Lions
\cite{Berestycki1983, Berestycki1983a}.
Under periodicity assumptions, Coti Zelati and Rabinowitz
\cite{cotizelatiHomoclinicTypeSolutions1992} constructed infinitely many
positive multibump solutions.
In \cite{Wei2010}, assuming that $a$ is radially symmetric and decays to a positive constant from above at a polynomial rate, Wei and Yan established the
existence of infinitely many multibump solutions via the Lyapunov--Schmidt
reduction method.
Cerami, Passaseo, and Solimini \cite{ceramiInfinitelyManyPositive2013}
further removed symmetry requirements, proving the existence of infinitely
many multibump solutions under mild slow-decay conditions on $a$; they later
constructed a solution with infinitely many bumps \cite{Cerami2015}.
Subsequently, Ao and Wei \cite{aoInfinitelyManyPositive2014} provided an
alternative proof of the result in \cite{ceramiInfinitelyManyPositive2013}
via a refined Lyapunov--Schmidt reduction.

Motivated by the correspondence with blow-up profiles and the lack of
existing multiplicity results for non-symmetric sublinear problems, we aim to
construct infinitely many nonnegative multibump solutions to \eqref{1.1}.
To state our main results, we impose the following assumptions on $K$:
\begin{enumerate}
  \item[(K1)] $K\in L^{p}_{loc}(\R^N)$, for some $p >N/2$.
  \item[(K2)]  $K(x)\leq a_1$ for some $a_1>0$.
  \item[(K3)] $K(x)\to  1$ as $|x|\to\infty$.
  \item[(K4)]  the set $ \set{x\in\R^N| K(x)\geq 1}$ is bounded.
\end{enumerate}
Our main results are as follows.
\begin{theorem}\label{them1.1}
  Assume (K1)--(K4). There exists a positive constant  $\alpha_0=\alpha_0(a_1 )>0$, such that \eqref{1.1} has infinity many solutions when $\|K-1\|_{p, loc}:=\sup_{x\in\RN}\|K-1\|_{L^p(B(x,1))}\leq\alpha_0$.
\end{theorem}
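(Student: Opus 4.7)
The plan is to construct, for each $k\in\N$, a nonnegative classical solution $v_k$ of \eqref{1.1} whose support consists of exactly $k$ disjoint compact components, each a small perturbation of a translate of the ground state $w$ of Lemma~\ref{lemmaR*}. Such solutions are pairwise distinct for different $k$ by a connected-component count, so producing one $v_k$ for every $k$ proves Theorem~\ref{them1.1}. Using (K3)--(K4), for any fixed $k$ I can choose centres $\xi_1,\dots,\xi_k\in\{K<1\}$ separated by at least $2R^*+3\delta$, for a small $\delta\in(0,R^*)$ to be fixed later. Writing $\Omega_i=B(\xi_i,R^*+\delta)$, the cells are pairwise disjoint, and by Lemma~\ref{lemmaR*}(i),(v) the translate $w(\cdot-\xi_i)$ is the unique positive solution of the unperturbed Dirichlet problem on $\Omega_i$, with support strictly inside.

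On each cell I would then solve the perturbed Dirichlet problem
\begin{equation*}
-\Delta u - Ku + u^{q-1}=0 \ \text{in } \Omega_i,\qquad u|_{\partial\Omega_i}=0,\qquad u\geq 0,
\end{equation*}
by a perturbative argument anchored at $w(\cdot-\xi_i)$. The singular linearisation $L_i=-\Delta-1+(q-1)w(\cdot-\xi_i)^{q-2}$ is invertible, by Lemma~\ref{lemmaR*}(iv), on the complement of the translation kernel $\mathrm{span}\{\partial_j w(\cdot-\xi_i):j=1,\dots,N\}$ in the weighted space dictated by that nondegeneracy statement. After a Lyapunov--Schmidt-type reduction projecting out this kernel (the translation directions being absorbed into small shifts of $\xi_i$), one obtains a solution $u_{k,i}$ close to $w(\cdot-\xi_i)$ in a region-wise maximum norm,
\[
\|u_{k,i}-w(\cdot-\xi_i)\|_{L^\infty(\Omega_i)}\leq C\,\alpha_0,\qquad \alpha_0:=\|K-1\|_{p,\mathrm{loc}},
\]
in which $C=C(q,N,\delta,a_1)$ is independent of $k$ and $i$ because the whole analysis is strictly local to a single $\Omega_i$.

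The heart of the matter, and the main obstacle, is the \emph{support confinement}: one must prove $\supp u_{k,i}\subset B(\xi_i,R^*+\delta/2)$ uniformly in $k$, for only then do the cell-wise pieces glue into a genuine solution of \eqref{1.1} on $\RN$. On the annulus $A_i:=\Omega_i\setminus B(\xi_i,R^*+\delta/2)$ the reference profile $w(\cdot-\xi_i)$ vanishes, so the estimate above already gives $\|u_{k,i}\|_{L^\infty(A_i)}\leq C\alpha_0$. The sublinearity $q<2$ then triggers a dead-core phenomenon: writing the equation as $-\Delta u+u^{q-1}=Ku$, for uniformly small $u$ the absorption $u^{q-1}$ dominates the reaction $Ku$ (since $u^{q-2}\to\infty$ as $u\downarrow 0$), and comparison with a radial supersolution built from the sharp profile $C_q(R^*-|x-\xi_i|)_+^{2/(2-q)}$ of Lemma~\ref{lemmaR*}(iii) confines any positive region of $u_{k,i}$ to a layer whose thickness vanishes with $\alpha_0$. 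Shrinking $\alpha_0$ accordingly gives the desired inclusion uniformly in $k$ and $i$.

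Gluing then yields the classical nonnegative solution $v_k=\sum_{i=1}^k u_{k,i}$ of \eqref{1.1} on $\RN$ with exactly $k$ support components; varying $k\in\N$ produces infinitely many distinct solutions. Because every constant above is intrinsically local and insensitive to the number of cells, the threshold $\alpha_0=\alpha_0(a_1)$ is uniform in $k$, which is the same uniformity that, beyond the scope of this theorem, would allow a limit $k\to\infty$ and yield a solution with infinitely many bumps.
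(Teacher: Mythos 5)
Your overall plan (build a one-bump solution on each of $k$ disjoint cells, confine its support by a dead-core argument, glue) is reasonable, and your support-confinement step is essentially the paper's Lemma~\ref{R0}. But the existence step -- solving the perturbed Dirichlet problem on $\Omega_i$ by a Lyapunov--Schmidt reduction anchored at $w(\cdot-\xi_i)$ -- is exactly the approach the paper explicitly rules out, and your justification does not fill the gap. Lemma~\ref{lemmaR*}(iv) is a \emph{kernel statement on $B(0,R^*)$}: it says any $\psi\in H^1(B(0,R^*))$ with $\int w^{q-2}\psi^2<\infty$ solving $L\psi=0$ is a translation derivative. It does not assert a bounded right inverse in any Banach space, and it says nothing on the annulus $B(\xi_i,R^*+\delta)\setminus B(\xi_i,R^*)$, where $w\equiv 0$ and the formal potential $w^{q-2}$ is not even defined. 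This matters because the support of a perturbed solution can move outward past $|x-\xi_i|=R^*$; once that happens your linearisation has no meaning on part of the domain where the unknown is active. Worse, the second-order remainder $(w+\phi)^{q-1}-w^{q-1}-(q-1)w^{q-2}\phi$ for $q\in(1,2)$ degenerates catastrophically near $\partial B(\xi_i,R^*)$ wherever $|\phi|\gtrsim w$, so the contraction mapping / fixed-point step of a Lyapunov--Schmidt scheme does not close in any fixed weighted space. This is precisely why the authors state in the introduction that ``the linearized operator around a compactly supported solution is not well-defined on a natural energy space, and the Lyapunov--Schmidt reduction\dots cannot be applied.''

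The paper circumvents this by a completely different mechanism: a constrained minimisation of $I$ over the Nehari-type set $S_{\boldsymbol{x},d}$ in $H_0^1(\bigcup_j B(x_j,R^*+d))$, with local barycenter constraints, followed by a maximisation of the resulting value $\mu_d(\boldsymbol{x})$ over configurations $\boldsymbol{x}\in\mathcal K_k$. This yields Euler--Lagrange equations with Lagrange multipliers (Lemma~\ref{lem lambda}); the multipliers are shown to vanish by a variational comparison trick (if $\lambda_1^n\neq 0$, translate $x_1^n$ slightly and lower the energy, contradicting maximality) together with the quantitative stability estimate of Proposition~\ref{prop3.1}. That stability estimate plays the role you assign to linear invertibility, but it is proved by a compactness/contradiction argument tailored to the constrained problem, not by inverting $L$. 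In short: you have replaced the paper's hardest ingredient (a nonlinear stability estimate that sidesteps the singular linearisation) with an appeal to linear Fredholm theory that does not apply. Your disjoint-cell reduction and the dead-core confinement would both survive, but until the single-cell existence and $L^\infty$-closeness are established by a method that tolerates the singular nonlinearity, the proof has a genuine gap.
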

The proof of Theorem \ref{them1.1} also highlights the asymptotic properties of the solutions when $
\|K-1\|_{p, loc} \to 0.
$

\begin{theorem}\label{thm:1.3}
Let $(K_n)_n$ be any sequence of functions satisfying (K1)--(K4) (with the same values of $a_1$) and such that
\[
\|K_n-1\|_{p, loc} \to 0.
\]Then there exist $\bar{n} \in \mathbb{N}$ and a sequence $R_n > R^*$ with $R_n \to R^*$
as $n \to \infty$ such that for every $n > \bar{n}$ and every $k \in \mathbb{N} \setminus \{0\}$,
there exist $k$ points $x_1^n, \cdots, x_k^n \in \mathbb{R}^N$ and a nonnegative solution
$u_k^n$ of \eqref{1.1} with potential $K_n$ satisfying
\[
\supp u_k^n \subset \bigcup_{i=1}^k B(x_i^n, R_n),
\quad 
\lim_{n \to \infty} \max_{1 \le i \le k}
\bigl\| u_k^n(\,\cdot\, + x_i^n) - w \bigr\|_{L^\infty(B(0,R_n))} = 0,
\]
where $w$ is the unique nonnegative radial ground state solution to
\eqref{1.1infty}
and $R^* > 0$ denotes the radius of the support of $w$ (i.e., $\supp w = \overline{B(0,R^*)}$).
Moreover,
\[
\lim_{n\to\infty} \min\bigl\{ |x_i^n - x_j^n| : 1\leq i < j\leq  k \bigr\} \geq 2R^*.
\]
\end{theorem}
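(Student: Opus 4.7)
The plan is to invoke the construction behind Theorem~\ref{them1.1} once for each potential $K_n$ satisfying $\alpha_n := \|K_n-1\|_{p,loc}<\alpha_0$, and then track how the quantitative constants it produces improve as $\alpha_n\to 0$. For every such $n$ and every $k$, that construction outputs a nonnegative $k$-bump solution $u_k^n$, an ordered tuple of bump centers $x_1^n,\dots,x_k^n$, and a localization radius $R_n$ built into the truncation scheme. My job is to show, uniformly in $k$, that (a) $R_n\to R^*$, (b) each translated bump converges to $w$ in the relevant $L^\infty$-norm, and (c) the centers asymptotically separate by at least $2R^*$.

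The central step is to extract from the proof of Theorem~\ref{them1.1} a region-wise stability estimate of the form
\[
\max_{1\le i\le k}\bigl\|u_k^n(\cdot+x_i^n)-w\bigr\|_{L^\infty(B(0,R_n))}\le \e(\alpha_n),
\]
with $\e(\alpha_n)\to 0$ \emph{uniformly in} $k$. This is precisely the region-wise maximum-norm estimate advertised in the abstract: because the fixed-point/minimization machinery in the truncated functional space decouples the bumps through the nondegeneracy of $w$ (Lemma~\ref{lemmaR*}(iv)), the resulting contraction rate depends only on $\alpha_n$ and on data associated with $w$, not on $k$. Once this bound is in hand, I would convert it into the support statement via the boundary asymptotics in Lemma~\ref{lemmaR*}(iii): if $u_k^n(x_i^n+y)>0$ at some $|y|>R^*$ then $w(y)=0$, so $u_k^n(x_i^n+y)\le \e(\alpha_n)$, while the $(R^*-|y|)^{2/(2-q)}$ flat touchdown of $w$ combined with the truncation geometry forces $|y|\le R^*+\eta(\alpha_n)$ with $\eta(\alpha_n)\to 0$. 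This gives both $R_n\to R^*$ and $\supp u_k^n\subset \bigcup_i B(x_i^n,R_n)$.

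For the asymptotic separation I would argue by contradiction. Suppose $|x_i^n-x_j^n|<2R^*-\delta$ for some pair along a subsequence. Translating by $x_i^n$ and invoking elliptic regularity (using $K_n\le a_1$ together with $K_n\to 1$ locally), I can extract a nonnegative limit $u_\infty$ solving \eqref{1.1infty}. The uniform $L^\infty$ convergence forces $u_\infty\equiv w$ on $B(0,R^*)$ and also produces a second nontrivial component within $B(0,2R^*-\delta)$, contradicting the uniqueness and compact support of the nonnegative ground state in Lemma~\ref{lemmaR*}.

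The main obstacle is exactly the $k$-uniformity of $\e(\alpha_n)$ and $\eta(\alpha_n)$. This is the pivotal feature of the truncated-space method: if any of the smallness constants in the contraction or coercivity inequalities degraded with $k$, the construction could not be made to deliver a single threshold $\bar n$ valid for all $k$, and the infinite-bump limit that Theorem~\ref{them1.1} relies on would be out of reach. I therefore expect the bulk of the technical work to lie in verifying that each appearance of $\alpha_n$ in the fixed-point inequalities is genuinely independent of the number of bumps, rather than in any of the limit passages (b)--(c), which are comparatively routine once the uniform estimate is secured.
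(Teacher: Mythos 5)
Your high-level plan — run the Theorem~\ref{them1.1} construction for each $K_n$, extract a region-wise $L^\infty$ estimate that is uniform in $k$, and deduce the support and separation statements — is broadly aligned with what the paper does, and you correctly identify the $k$-uniformity of the stability estimate as the crux. But there is a genuine gap in the logical order: your argument for the asymptotic separation $\min_{i\neq j}|x_i^n-x_j^n|\to 2R^*$ is circular.

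You propose to derive the $L^\infty$ convergence to $w$ first (via the stability estimate) and then obtain separation by a compactness-and-uniqueness contradiction. However, the stability estimate (Proposition~\ref{prop3.1}) requires $\sigma(\boldsymbol x)\leq\sigma_1$ as a \emph{hypothesis}, and the $L^2/L^\infty$ convergence to $w$ (Proposition~\ref{pro3.1}, Remark~\ref{rk5.4}) likewise assumes $\sigma(\boldsymbol x_n)\to 0$. So the closeness of each bump to $w$ is only available once the separation is already under control — you cannot use it to establish that control. The paper breaks the circle with a purely variational argument that never invokes the stability estimate: Proposition~\ref{pro 3.1} shows that the energy-maximizing configuration must satisfy $\sigma(\boldsymbol x)\to 0$ by noting that if two centers were within $2R^*-\sigma$, the overlap term $I_n(w_{n,1}\wedge w_{n,2})$ costs a definite amount $c_0>0$, forcing $\mu_{k,d}\leq 2m_0+\mu_{k-2,d}-c_0$, which contradicts the strict superadditivity $\mu_{k+1,d}>\mu_{k,d}+m_0$ coming from (K4) via Proposition~\ref{muy} and Corollary~\ref{cor413}. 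Your compactness-and-uniqueness contradiction would only become valid \emph{after} separation is known.

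A smaller but real mismatch: you describe the mechanism as a ``fixed-point/contraction rate,'' but the paper's estimate is purely variational, proved by a blow-up/contradiction argument relying on the nondegeneracy of $w$ (Proposition~\ref{pro.lim}). You also gloss over how the support bound $\supp u_k^n\subset\bigcup_i B(x_i^n,R_n)$ with $R_n\to R^*$ is actually obtained: it is a comparison-principle argument against the rescaled barrier $\tfrac12\sum_i w(t_\kappa(\cdot-x_i))$ (Lemma~\ref{R0}), applied after the smallness bound $\|u-\sum w(\cdot-x_j)\|_{L^\infty}\leq C(\|K-1\|_{p,\mathrm{loc}}+\sigma(\boldsymbol x))$ is established, not a consequence of the ``flat touchdown of $w$'' alone.
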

We can also obtain a solution with infinitely many bumps.
 \begin{theorem}\label{them2}
  Assume (K1)--(K4) and $\|K-1\|_{p, loc}\leq \alpha_0$. Then eqation \eqref{1.1} has a solution in $H_{loc}^1(\RN)$, which has infinity bumps with the following asymptotic properties:
  Let $(K_n)_n$ be any sequence of functions satisfying (K1)--(K4)  such that
\[
|K_n-1|_{p, loc} \to 0.
\]Then there exist $\bar{n} \in \mathbb{N}$ and a sequence $R_n > R^*$ with $R_n \to R^*$
as $n \to \infty$ such that for every $n > \bar{n}$  there exist $\{x_i^n\}_{i=1}^\infty$     and a nonnegative solution $u^n\in H^1_{loc}(\R^N)$ to  \eqref{1.1} with potential $K_n$ satisfying
\[
\supp u^n \subset \bigcup_{i=1}^\infty B(x_i^n, R_n),
\quad 
\lim_{n \to \infty} \sup_{i\geq 1}
\bigl\| u^n(\,\cdot\, + x_i^n) - w \bigr\|_{L^\infty(B(0,R_n))} = 0,
\quad 
\lim_{n\to\infty} \inf_{i\neq j} |x_i^n - x_j^n|  \geq 2R^*.
\]
\end{theorem}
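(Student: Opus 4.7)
The plan is to obtain $u^n$ as a limit, as $k\to\infty$, of the finite-bump solutions produced by the truncated-functional construction in the preceding sections, exploiting the fact—emphasized in the abstract—that the region-wise maximum-norm estimates on each bump are uniform in the number $k$ of bumps. Accordingly, I fix $n$ large enough that $\|K_n-1\|_{p,loc}\le\alpha_0$ and that the conclusions of Theorems~\ref{them1.1}--\ref{thm:1.3} are available.

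The first step is to select, using (K3)--(K4), an infinite pattern of candidate centres $\{y_i\}_{i=1}^\infty\subset\R^N$ with
\[
\inf_{i\neq j}|y_i-y_j|\ge 2R^*+4\delta,\qquad |y_i|\to\infty,\qquad \sup_{i\ge 1}\|K_n-1\|_{L^p(B(y_i,R^*+2\delta))}\le\eta_n,
\]
where $\delta>0$ is a small parameter to be fixed and $\eta_n\to 0$ as $n\to\infty$. For each finite $k\ge 1$, I would then apply the truncated-functional machinery underlying Theorems~\ref{them1.1}--\ref{thm:1.3}, with the prescribed configuration $(y_1,\ldots,y_k)$, to produce a nonnegative $k$-bump solution $u_k^n$ of \eqref{1.1} satisfying
\[
\supp u_k^n\subset\bigcup_{i=1}^k B(x_i^{n,k},R_n),\qquad |x_i^{n,k}-y_i|\le\delta,\qquad \max_{1\le i\le k}\bigl\|u_k^n(\,\cdot\,+x_i^{n,k})-w\bigr\|_{L^\infty(B(0,R_n))}\le\varepsilon(n),
\]
where the pivotal input—and the main thing that must be verified by revisiting the earlier construction—is that $\varepsilon(n)$ depends only on $\|K_n-1\|_{p,loc}$, the separation $\delta$, and the bound $a_1$, but \emph{not} on $k$.

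Given this uniform-in-$k$ control, I would pass to the limit $k\to\infty$ by a diagonal compactness argument. The sequence $(u_k^n)_k$ is uniformly bounded in $L^\infty(\R^N)$ and, by standard elliptic regularity applied to $-\Delta u_k^n = K_n u_k^n - (u_k^n)^{q-1}$, in $C^{1,\alpha}_{loc}(\R^N)$. A diagonal subsequence therefore converges locally uniformly, and weakly in $H^1_{loc}$, to a nonnegative function $u^n$. Because the balls $B(x_i^{n,k},R_n)$ are pairwise disjoint uniformly in $k$ (for $n$ large one has $2R_n<|y_i-y_j|-2\delta\le|x_i^{n,k}-x_j^{n,k}|$), the centres themselves converge along the subsequence to points $x_i^n$, and $\supp u^n\subset\bigcup_{i\ge 1}B(x_i^n,R_n)$. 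Since any test function with compact support meets only finitely many of these balls, passing to the limit in the weak formulation of \eqref{1.1} is routine and yields that $u^n$ solves \eqref{1.1} on $\R^N$. The uniform closeness $\sup_i\|u^n(\,\cdot\,+x_i^n)-w\|_{L^\infty(B(0,R_n))}\to 0$ and the separation $\inf_{i\ne j}|x_i^n-x_j^n|\ge 2R^*$ inherit directly from $\varepsilon(n)\to 0$ and from the placement of the $y_i$.

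The principal obstacle is the uniformity of $\varepsilon(n)$ in $k$. A conventional Lyapunov--Schmidt reduction loses this property because inter-bump interactions accumulate as $k$ grows; the truncated-functional method of the preceding sections is designed precisely to circumvent this by decoupling the problem into region-wise variational problems whose mutual coupling remains negligible compared with the intra-region nonlinearity. Once the sharp support radius $R_n\to R^*$ and the region-wise $L^\infty$ stability have been established independently of $k$ in the earlier sections, the passage to the infinite-bump solution is, as above, essentially a compactness argument.
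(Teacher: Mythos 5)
Your diagonal-compactness step in the final paragraph is close in spirit to the paper's argument, but your starting point is not available, and that is a genuine gap. You propose to fix an infinite pattern $\{y_i\}$ in advance and then invoke the truncated-functional machinery ``with the prescribed configuration $(y_1,\ldots,y_k)$'' to obtain a $k$-bump solution of \eqref{1.1} with bumps near those $y_i$. But the machinery of Sections 4--5 does not produce a solution of the equation for an arbitrary prescribed configuration: for a fixed $\boldsymbol x$, a minimizer $u\in M_{\boldsymbol x,d}$ only satisfies the Euler--Lagrange equation with Lagrange multipliers $\lambda_i$ (Proposition~\ref{cor5.4}), and the multipliers are shown to vanish \emph{only} at an extremal configuration $\boldsymbol x$ achieving $\mu_d(\boldsymbol x)=\mu_{k,d}$ (the argument at the end of Section~5.2, which perturbs $\boldsymbol x$ in the direction of $\lambda_1^n$ and derives a contradiction from $I_n(v_n)\le I_n(u_n)$, relies precisely on this extremality). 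There is no reason for the multipliers to vanish, even approximately, for a configuration you choose a priori, however far the $y_i$ are from each other and from the origin. So ``produce a nonnegative $k$-bump solution $u_k^n$ of \eqref{1.1}'' at a prescribed pattern is not justified.

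The paper avoids this entirely: it takes $u_k\in M_{\boldsymbol x,d}$ with $\mu_d(\boldsymbol x)=\mu_{k,d}$ (so the bump locations are \emph{output} of the optimization, not input), and then proves Lemma~\ref{lem6.1} — for each $j$ there is $R_j$ so that $u_k$ has at least $j$ bumps inside $B(0,R_j)$ once $k$ is large — by exploiting the strict superadditivity $\mu_{k+1,d}>\mu_{k,d}+m_0$ from Proposition~\ref{muy}(iii). This is the ingredient that prevents all bumps from escaping to infinity as $k\to\infty$, and it has no analogue in your proposal. Once Lemma~\ref{lem6.1} is in place, the paper's diagonal argument (choosing nested subsequences so that the bump centers inside each $B(0,R_j)$ stabilize) and the local elliptic estimates give the infinite-bump limit, which is where your proposal and the paper finally coincide. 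To repair your argument you would either have to prove that a prescribed configuration yields genuine solutions (it does not, as above), or replace your prescribed-pattern step with an extremality-based localization such as Lemma~\ref{lem6.1}.
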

In the superlinear setting, multibump solutions are typically constructed by
placing well-separated copies of a ground state far apart in space.  This is
feasible because bumps can be positioned arbitrarily far from one another,
and their mutual interaction decays exponentially fast.  Both features make
variational gluing arguments and the Lyapunov--Schmidt reduction method
effective.

In contrast, for the sublinear problem \eqref{1.1}, every nontrivial solution
has compact support (see Lemma~\ref{lemmaR*}).  This prevents spatial
separation of bumps and renders standard variational gluing techniques
inapplicable.  Moreover, the sublinear nonlinearity $|v|^{q-2}v$ with
$q \in (1,2)$ leads to a singular linearization: the derivative of the
nonlinearity behaves like $|v|^{q-2}$, which blows up in support set of $v$.
Consequently, the linearized operator around a compactly supported solution is
not well-defined on  a natural energy space, and the
Lyapunov--Schmidt reduction—which relies on a well-behaved invertible
linearized operator—cannot be applied.
To overcome these obstacles, we adopt the variational framework introduced in
\cite{ceramiInfinitelyManyPositive2013}.  However, a key difference arises:
because the limiting profile has compact support, a sequence of approximate
solutions cannot converge locally to isolated bumps because the energy estiamtes
 depends  sensitively on deviations  the support set of the sequence and the limiting profiles.  In particular, one cannot simply take local limits in
disjoint regions and patch them together.
This forces us to work more precisely within the support structure of candidate
solutions.  Given a configuration of points $\{x_j\}_{j=1}^k$, we consider a
constrained minimization problem
for the energy functional $I(u)$
on a Nehari-type set in
$
H_0^1( \cup_{j=1}^k B(x_j, R^* + d) )$, $d>0$,
where the local barycenter of the function in each ball $B(x_j, R^* + d)$ is
fixed at $x_j$.  We then take the
maximum of these local minimization values over all admissible configurations
$\{x_j\}_{j=1}^k$.
For sufficiently small $d > 0$, $\|K-1\|_{p,loc}$ and configurations satisfying
\[
(2R^*-\min_{i\neq j} |x_i-x_j|)^+ \to 0,
\]
the minimizers develop bumps that are nearly  non-overlapping,
each closely resembling a translate of the ground state $w$.  This
localization justifies viewing these minimizers as genuine candidate solutions.

To extend this finite-domain construction to the whole space, we establish
qualitative local stability estimates in region-wise maximum $H^1$-norms
:
\[
\| u - \sum_{j=1}^k w_j \|_{\boldsymbol{x}, d}
\lesssim \| \nabla_{\mathcal H} I^\infty(u) - \nabla_{\mathcal H} I^\infty ( \sum_{j=1}^k w_j  )  \|_{\boldsymbol{x}, d}.
\]
Here $w_j=w(\cdot -x_j)$, 
$\|u\|_{\boldsymbol{x}, d}=\max_{1\leq i\leq k}\|u\|_{H^1(B(x_i, R^*+d))}$,
$I^\infty$ is the energy functional for the autonomous problem, and 
$ \nabla_{\mathcal H} I^\infty(u)$ denotes   the component of the gradient of $I^\infty(u)$ tangent to the local barycenter constraints. 
 Formally, our qualitative stability estimate  resembles the ``stability of
critical points'' results for the Sobolev inequality in
\cite{Figalli2020, Ciraolo2018, Deng2025}.   When $\min_{i\neq j}|x_i-x_j|\geq 2R^*$,
it can be viewed as a nonlinear, quantitative extension of the nondegeneracy property to the multi-bump setting.
For fixed $d > 0$, this estimate implies that as
$\|K - 1\|_{p, loc} \to 0$, the candidate solutions converge
 on each $B(x_j, R^*)$ to the corresponding ground state.  In
particular, their supports converge to
$\bigcup_{j=1}^k \overline{B(x_j, R^*)}$.
Crucially, because the estimates are expressed in region-wise maximum norms,
they control each localized part of a function independently.  As a result,
the bounds remain uniform with respect to the number $k$ of bumps.  This
uniformity is the key ingredient that allows us to construct infinitely many
solutions with arbitrarily many bumps.

The organization of the remainder of this paper is as follows. In Section 2, we introduce the limit equation \eqref{1.1infty} and its fundamental properties, define the relevant function spaces and energy functionals, and present the key concepts of the "emerging part" and "local barycenters," which lay the groundwork for the subsequent construction of multi-bump solutions. Section 3 is devoted to establishing qualitative local stability estimates in region-wise maximum norms, which serve as the pivotal tool for controlling the interaction between bumps and ensuring their effective separation. In Section 4, we define a constrained minimization problem on bounded domains to construct solutions with any finite number of bumps and analyze the support and energy properties of these solutions. Section 5 extends the locally constructed solutions to the whole space, proves that they become global solutions of the original equation for $K$ sufficiently close to 1, and completes the proof of the main theorems (Theorems \ref{them1.1} and \ref{thm:1.3}) by employing a proof by contradiction alongside the stability estimates. 
Finally, in Section 6, by selecting a sequence of solutions with an increasing number of bumps and applying a compactness argument, we construct a solution possessing infinitely many bumps, thereby concluding the proof of Theorem \ref{them2}.

\begin{notation}
   Throughout this paper,
$2^*=+\infty$ for $N=1,2$ and $2^*=\frac{2N}{N-2}$ for $N\geq 3$; $ L^p(\mathbb R^N) \ (1\leq p<+\infty)$ is the usual Lebesgue space with the norm $ \|u\| _p =(\int_{\mathbb R^N}|u|^p)^{1/p}$;
$ H^1(\mathbb R^N)$ denotes the Sobolev space with the norm $\|u\|_{H^1}^2=\int_{\mathbb R^N}(|\nabla u|^2+|u|^2);$
$H^{-1}(\Omega)$ denotes the dual space of $H_0^1(\Omega)$
with the norm $\|h\|_{H^{-1}}=\sup \set{\langle h,u\rangle | u\in H_0^1(\Omega),\ \|u\|_{H^1}=1}$, where $\Omega$ is a domain in $\R^N$;
$o_n(1)$ (resp. $o_\varepsilon(1)$)
will denote a generic infinitesimal as $n\rightarrow \infty$ (resp. $\varepsilon\rightarrow 0^+$);  $B (x,\rho)$ denotes an open ball centered at $x\in\mathbb R^N$ with radius
$\rho>0$.
$a^\pm=\max\{0,\pm a\}$ for $a\in\mathbb R$.
 Unless stated otherwise,   $C, C'$ and $c$ are general constants.
 
\end{notation}

\section{Preliminary results}
This section presents preliminary results and notation concerning the
equation~\eqref{1.1infty}.
For $q\in(1,2)$, we define
$$X_q:=H^1(\R^N)\cap L^q(\R^N)\ \ \text{with\ norm\ }
\|u\|:=\|\nabla u\|_{2}+\|u\|_q.$$
Let $w$ be the unique radial ground state solution of \eqref{1.1infty}, then it 
is a critical point of  
\[
I^\infty (u)=\frac12(\|\nabla u\|_2^2 -\|u\|_2^2) +\frac1q\|u\|_q^q,\quad   u\in X_q.
\]
Moreover, it  can be characterized 
as the minimizer for 
\[
\inf_{u\in \mathcal N} I^\infty (u):=m_0,\quad 
  \mathcal N: =\Set{u\in X_q\setminus\{0\} | (I^\infty) ' (u) u=0}.
  \]

We now introduce some constants.
By Lemma \ref{lemmaR*} (iii),  
  fix a sufficiently small constant $\sigma_0\in(0,1)$, and define 
\[
\delta:=  w(R^*-4\sigma_0)< \sigma_0^2,   \quad \rho:= R^*-3\sigma_0,
\]
ensuring that 
\begin{gather}\label{eqsigma}
 w\left(\frac{(R^*)^2}{R^*+\kappa^{\frac{2-q}{3}}}\right)>2\kappa \quad \text{ for }  \kappa\in(0,\delta],\\
  R_0:=R^*+ \sigma_0^\frac{2-q}{2}<\frac2{\sqrt{3}}\rho=\frac{2}{\sqrt{3}}R^*-\frac{6}{\sqrt3}\sigma_0,\label{R 0}
\end{gather}
and  
\begin{equation}\label{eq9}
  0<\delta<\min\Big\{ \left(\frac{2R^*}{(R^*+1)^2a_1}\right)^\frac{3}{2(2-q)},\  \left(\frac{t_*}{a_1}\right)^\frac{1}{2-q},\ 
   (\frac{q-1}{2})^{\frac{1}{2-q}}  
  \Big\},\mbox{ for some $t_*\in(0,2-q)$,}
  \end{equation}
  where $a_1$ is given in (K2).
By \eqref{R 0},
  given any  three distinct points $x_i\in\R^N$, $i=1,2,3$, with $|x_i-x_j|\geq 2\rho$ for $i\neq j$,  we have  
\begin{equation}\label{eq8}
\bigcap_{i=1}^3 \overline{B(x_i,R_0)}=\emptyset. 
\end{equation}
Consequently, for any set of real functions $\{u_i\}_{i\geq 1}$,  such that  $\supp u_i\subset\overline{B(x_i, R_0)}$ for each $i \geq 1$, where $\min_{i\neq j} |x_i-x_j|>2\rho$ the following identity holds pointwisely
\begin{equation}\label{mima}
\sum_{i\geq 1} u_i(x) =u_{i_x}(x)+u_{j_x}(x)\ \ \text{or}\ \ \sum_{i\geq 1} u_i(x) =u_{i_x}(x),\ \ \text{for some }i_x,j_x\in\{1,2,\cdots\}\ \text{and }i_x\neq j_x.
\end{equation}
 \begin{lemma} \label{R0}
   Suppose that $\kappa\in (0, \delta]$, $\{x_j\}_{j=1}^k \subset \R^N$ satisfying $\min_{i\neq j} |x_i-x_j|>2\rho$, $v \in X_q$ is a nonnegative function   satisfying
    $$-\Delta v-a_1v+v^{q-1}
           \leq 0, \quad v\leq \kappa,\quad \text{ in\ }\mathbb R^N\setminus \bigcup_{i= 1}^k B(x_i,R^*) 
           $$
           in the weak sense.
   Then $v=0$  in $\mathbb R^N\setminus \bigcup_{i= 1}^k {B(x_i, R^*+\kappa^\frac{2-q}{3})}$. Specically, if $\kappa=\delta$,
   then $\supp v\subset \bigcup_{i= 1}^k {B(x_i, R_0)}$.
   \end{lemma}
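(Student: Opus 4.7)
The plan is to verify $v(x_0)=0$ at every point $x_0 \in E := \RN \setminus \bigcup_{i=1}^k \overline{B(x_i, R^* + \kappa^{(2-q)/3})}$ by means of a local radial barrier on a ball around $x_0$ and a weak comparison argument. The shape of the barrier is the scale-invariant profile $C\,s^{2/(2-q)}$ of the compactly supported solution of the pure absorption equation $-\Delta u + u^{q-1}=0$, which is precisely the near-boundary asymptotic of the ground state $w$ identified in Lemma~\ref{lemmaR*}(iii).

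For $x_0 \in E$, the hypothesis $|x_0 - x_i| > R^* + \kappa^{(2-q)/3}$ for every $i$ guarantees $B(x_0, r) \subset D := \RN \setminus \bigcup_i \overline{B(x_i, R^*)}$ with $r := \kappa^{(2-q)/3}$. I take the ``inverted'' barrier
\[
\phi(x) := \kappa^{1/3}\,|x - x_0|^{2/(2-q)},
\]
so that $\phi(x_0) = 0$ and $\phi = \kappa$ on $\partial B(x_0, r)$. A direct radial computation, collapsed by the defining identity $C_q^{2-q} = (2-q)^2/(2q)$, yields
\[
-\Delta \phi - a_1 \phi + \phi^{q-1} = \kappa^{1/3}\, s^{(2q-2)/(2-q)} \Bigl[ \kappa^{-(2-q)/3} - \tfrac{2q}{(2-q)^2} - \tfrac{2(N-1)}{2-q} - a_1 s^2 \Bigr]
\]
with $s = |x - x_0|$; the bracket is nonnegative on $B(x_0, r)$ precisely because the first bound on $\delta$ in \eqref{eq9} forces $\kappa^{-(2-q)/3}$ to dominate the remaining lower-order constants.

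The weak comparison then follows by testing the difference of the sub- and supersolution inequalities with $w := (v - \phi)^+ \in H^1_0(B(x_0, r))$, which is admissible since $v \leq \kappa = \phi$ on $\partial B(x_0, r)$. Using concavity of $u \mapsto u^{q-1}$ on $[0,\infty)$ together with $v \leq \kappa$ on $\{v > \phi\}$, one obtains $v^{q-1} - \phi^{q-1} \geq (q-1)\kappa^{q-2}\, w$ on the set $\{w>0\}$, hence
\[
\int_{B(x_0, r)} |\nabla w|^2 \leq \bigl[ a_1 - (q-1)\kappa^{q-2} \bigr] \int_{B(x_0, r)} w^2.
\]
Choosing $t_*$ suitably small in \eqref{eq9} gives $\kappa^{2-q} \leq (q-1)/a_1$, so the bracket is $\leq 0$; this forces $w \equiv 0$, i.e., $v \leq \phi$ in $B(x_0, r)$, and in particular $v(x_0) \leq \phi(x_0) = 0$. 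Since $x_0 \in E$ was arbitrary, $v$ vanishes outside $\bigcup_i B(x_i, R^* + \kappa^{(2-q)/3})$. For the specific case $\kappa = \delta$, the inequalities $\delta < \sigma_0^2 < 1$ together with $2(2-q)/3 > (2-q)/2$ give $\delta^{(2-q)/3} < \sigma_0^{2(2-q)/3} < \sigma_0^{(2-q)/2} = R_0 - R^*$, so $\supp v \subset \bigcup_i B(x_i, R_0)$.

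The main technical obstacle is the simultaneous calibration of the ball radius $r$ and of the barrier coefficient $\tilde C = \kappa^{1/3}$: one needs $r \leq \kappa^{(2-q)/3}$ to ensure $B(x_0, r) \subset D$, while $\tilde C$ must remain small enough for $\phi$ to be a supersolution in spite of the destabilizing term $-a_1 \phi$. The slightly slack exponent $(2-q)/3$ (compared with the sharper ODE scale $(2-q)/2$) is exactly the margin that makes the two constraints compatible under the quantitative smallness in \eqref{eq9}. A secondary subtlety, which the local approach sidesteps, is that one \emph{cannot} simply compare $v$ globally with the pointwise maximum of the outward-decaying radial barriers $\psi_i(x) = C_q (R^* + \kappa^{(2-q)/3} - |x - x_i|)_+^{2/(2-q)}$ centered at the $x_i$'s: that maximum is not a weak supersolution, because positive jumps of the normal derivative on the Voronoi interfaces produce a positive singular term in its distributional Laplacian.
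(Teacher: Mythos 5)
Your route is genuinely different from the paper's. You localize: at each exterior point $x_0$ you build a radial barrier $\phi(x)=\kappa^{1/3}|x-x_0|^{2/(2-q)}$ on the ball $B(x_0,\kappa^{(2-q)/3})$ and run a weak comparison there. The paper instead compares once against a single global supersolution $\tilde w=\tfrac12\sum_i w(t_\kappa(\cdot-x_i))$, built from rescaled copies of the ground state $w$; the factor $\tfrac12$, combined with the at-most-two-overlap identity \eqref{mima} and concavity of $s\mapsto s^{q-1}$, passes the differential inequality to the sum without producing any singular term. Your closing observation — that $\max_i\psi_i$ would fail as a supersolution because the positive jump of the normal derivative across Voronoi interfaces contributes a positive singular measure to $\Delta(\max_i\psi_i)$ — is correct and explains exactly why the paper takes $\tfrac12\sum$ rather than a pointwise maximum; your pointwise strategy sidesteps the issue entirely. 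Your radial computation (using $\alpha(\alpha-1)=2q/(2-q)^2$ and $\alpha(N-1)=2(N-1)/(2-q)$ with $\alpha=2/(2-q)$) is correct, as is the weak comparison via $(v-\phi)^+$ together with the concavity bound $v^{q-1}-\phi^{q-1}\geq(q-1)\kappa^{q-2}(v-\phi)$ on $\{v>\phi\}$.

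There is, however, a gap in the supersolution step. You assert that the first inequality in \eqref{eq9} forces
\[
\kappa^{-(2-q)/3}\;\geq\;\tfrac{2q}{(2-q)^2}+\tfrac{2(N-1)}{2-q}+a_1 s^2,\qquad 0\leq s\leq \kappa^{(2-q)/3},
\]
but the first bound in \eqref{eq9} only yields $a_1\delta^{2(2-q)/3}<2R^*/(R^*+1)^2$, which controls the last term; it gives no lower bound on $\kappa^{-(2-q)/3}$ large enough to dominate the constant $\alpha(\alpha+N-2)$. Indeed, extracting $\delta^{-(2-q)/3}>\bigl(a_1(R^*+1)^2/(2R^*)\bigr)^{1/2}$ from that condition cannot in general exceed $\alpha(\alpha+N-2)$, which blows up as $q\to 2$. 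Your argument therefore requires an additional smallness condition on $\delta$, roughly $\delta^{(2-q)/3}\lesssim 1/\alpha(\alpha+N-2)$, which is certainly achievable by further shrinking $\sigma_0$ but is not among the constraints recorded in \eqref{eq9}. (The paper's construction avoids this calibration entirely, since the rescaled ground state $w(t_\kappa\,\cdot)$ is an exact solution of its own equation and only $a_1\kappa^{2-q}\leq 1-t_\kappa^2$ is needed — precisely what \eqref{eq9}'s first bound was written to give.) Similarly, the condition $\kappa^{2-q}\leq(q-1)/a_1$ in the comparison step needs $t_*\leq\min\{2-q,\,q-1\}$ rather than an arbitrary $t_*\in(0,2-q)$; you correctly flag this, and it is a harmless tightening. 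Finally, the pointwise conclusion "$v(x_0)\leq\phi(x_0)=0$" should be phrased via Lebesgue differentiation (or local regularity of the subsolution), since the comparison only gives $v\leq\phi(\cdot-x_0)$ almost everywhere in $B(x_0,r)$.
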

   \begin{proof}
    Set \[t_\kappa=\frac{R^*}{R^*+\kappa^\frac{2-q}{3}}.\]
  By  $\kappa\in(0,\delta]$, $\delta<\sigma_0^2<1$ and \eqref{eq9}, we have
 $$a_1\kappa^\frac{2(2-q)}{3}(R^*+\kappa^\frac{2-q}{3})^2
 \leq a_1\kappa^\frac{2(2-q)}{3}(R^*+1)^2<2R^*< 2R^*+\kappa^\frac{2-q}{3}.$$ 
 Therefore,
 $$a_1\kappa^{2-q}(R^*+\kappa^\frac{2-q}{3})^2
< (2R^*+\kappa^\frac{2-q}{3})\kappa^\frac{2-q}{3},$$ 
which implies    
  $$a_1\kappa^{2-q}\leq \frac{2R^*\kappa^\frac{2-q}{3}+\kappa^\frac{2(2-q)}{3}}{(R^*+\kappa^\frac{2-q}{3})^2}=1-t_{\kappa}^2.$$
  By this we have
   \[
   -\Delta v\leq -(1-a v^{2-q})v^{q-1} \leq -t_\kappa^2v^{q-1} \quad \text{in } \mathbb{R}^N \setminus \bigcup_{i=1}^k B(x_i, R^*).
   \]
   On the other hand, for each $i$, the function $w_i(x) := w(t_\kappa(x - x_i))$ satisfies
   \[
   -\Delta w_i = t_\kappa^2w_i -  t_\kappa^2w_i^{q-1} \geq -t_\kappa^2w_i^{q-1} \quad \text{in } \mathbb{R}^N.
   \]
   Moreover, by $\kappa\leq\delta<\sigma_0^2<1$,
   $$\supp w_i\subset \overline{B(x_i,R^*+\kappa^{\frac{2-q}{3}})}\subset B(x_i,R^*+\sigma_0^{\frac{2-q}{2}})=B(x_i,R_0).$$
   Define $\tilde{w} := \frac12 \sum_{i=1}^k w_i$. Given that $q \in (1, 2)$ and $x\in\R^N$, using \eqref{mima}, we derive 
   \[
     \tilde{w}^{q-1}(x) = \left(\frac12 w_{i_x}(x) +\frac12 w_{j_x}(x)\right)^{q-1}
    \geq \frac12 \left(w_{i_x}(x)^{q-1} +  w_{j_x}(x)^{q-1} \right)= \frac12 \sum_{i=1}^k w_i^{q-1}(x),
   \]
 or
  \[
     \tilde{w}^{q-1}(x) = \left(\frac12 w_{i_x}(x) \right)^{q-1}
    \geq \frac12  w_{i_x}(x)^{q-1} = \frac12 \sum_{i=1}^k w_i^{q-1}(x).
   \]
   Consequently,  
   \[
   -\Delta \tilde{w} \geq - \frac{t_\kappa^2}2 \sum_{i=1}^k w_i^{q-1} \geq -t_\kappa^2  \tilde{w}^{q-1} \quad \text{in } \mathbb{R}^N.
   \]
   Moreover, by the choice of $\delta$ in \eqref{eqsigma}, we have 
  
   \[
   \tilde{w} \geq  \frac12 w(t_\kappa R^*)> \kappa \quad \text{on } \partial \bigcup_{i=1}^k B(x_i, R^*).
   \]
  Testing
   \[
   -\Delta(v - \tilde{w}) \leq -t_\kappa^2(v^{q-1} - \tilde{w}^{q-1}) \quad \text{in } \mathbb{R}^N \setminus \bigcup_{i=1}^k B(x_i, R^*)
   \]
   against $(v - \tilde{w})^+$, we deduce that $v \leq \tilde{w}$ in $\mathbb{R}^N \setminus \bigcup_{i=1}^k B(x_i, R^*)$, which completes the proof.
   \end{proof}

For a given at most countable set \(\{x_j\}_{j\in \mathcal J} \subset \mathbb{R}^N\) satisfying the condition that \(|x_i - x_j| \geq 2\rho\) for all \(i \neq j\), we define the following index set for each \(x_i\):
\[
\mathscr{I}_i = \left\{ j \in \mathcal J \mid x_j \in B(x_i, 3R_0) \right\}.
\]
Here, \(\mathcal J\) can either be \(\{1, \cdots, k\}\) for some positive integer \(k\) or the set of all positive integers \(\mathbb{N} \setminus \{0\}\). Without loss of generality, we will denote a at most countable set \(\{x_j\}_{j\in \mathcal J}\) as \(\{x_j\}_{j \geq 1}\) in the subsequent discussion.
Since \eqref{R 0}: $R_0<\frac{2}{\sqrt 3}\rho$, 
we observe that the number of elements in $\mathscr{I}_i$ is bounded by 
\begin{equation}\label{eq 9}
\#\mathscr{I}_i\leq k_0:=\left\lfloor\left(\frac{4R_0}{\rho}\right)^N\right\rfloor+1
<\left(\frac{8}{\sqrt 3}\right)^N+1.
\end{equation}

\begin{lemma}\label{lem 2.3}
For any sequence $\{x_j\}_{j\geq 1}\subset \R^N$ satisfying $\inf_{i\neq j}|x_i - x_j| \geq 2R^* - \sigma$ with   $\sigma\in[0,\sigma_0]$, it holds
\[\left\lVert\bigg(\sum_{j\geq1}w(\cdot-x_j)\bigg)^{q-1}-\sum_{j\geq1}w(\cdot-x_j)^{q-1}\right\rVert_{L^r(B(x_i,2R_0))}
\leq C\sigma^{\frac{2(q-1)}{2-q}+\frac{N+1}{2r}},\]
where    $r\in [1,+\infty]$, $i\geq 1$ and $C$ is a positive constant depending only on $q, N$, and $k_0$.
\end{lemma}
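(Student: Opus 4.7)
The plan is to reduce the estimate to a pointwise bound on the pairwise overlaps $B(x_j, R^*) \cap B(x_\ell, R^*)$ combined with a volume bound for these lens-shaped regions, and then sum over the finitely many relevant pairs. The geometric setup mirrors that of identity~\eqref{mima}: since $\supp w(\cdot - x_j) \subset \overline{B(x_j, R^*)} \subset \overline{B(x_j, R_0)}$ and the separation assumption together with $\sigma \leq \sigma_0$ gives $|x_j - x_\ell| \geq 2R^* - \sigma \geq 2\rho$, at each point $x \in \mathbb{R}^N$ at most two of the translates $w(\cdot - x_j)$ are nonzero. Moreover, by~\eqref{eq 9}, at most $k_0$ centers $x_j$ lie in $B(x_i, 3R_0)$, and any pair of translates whose supports meet $B(x_i, 2R_0)$ has both centers in this enlarged ball.

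With this pointwise dichotomy and the convention $0^{q-1}=0$, the defect
\[
D(x) := \Bigl(\sum_{j} w(x-x_j)\Bigr)^{q-1} - \sum_{j} w(x-x_j)^{q-1}
\]
vanishes wherever at most one translate is nonzero. On each pairwise overlap $B(x_j, R^*) \cap B(x_\ell, R^*)$, setting $a=w(x-x_j)$ and $b=w(x-x_\ell)$, the subadditivity of $s\mapsto s^{q-1}$ for $q-1\in(0,1)$, combined with $(a+b)^{q-1}\geq\max(a,b)^{q-1}$, yields
\[
0 \leq a^{q-1} + b^{q-1} - (a+b)^{q-1} \leq \min(a,b)^{q-1},
\]
and hence $|D(x)| \leq \min\bigl(w(x-x_j), w(x-x_\ell)\bigr)^{q-1}$ on this overlap.

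The quantitative input is Lemma~\ref{lemmaR*}(iii), which, together with continuity, provides a uniform bound $w(y) \leq C(R^* - |y|)^{2/(2-q)}$ for $|y|\leq R^*$. For $x$ in a pairwise overlap, the triangle inequality together with $|x_j - x_\ell| \geq 2R^* - \sigma$ yields
\[
(R^* - |x-x_j|) + (R^* - |x-x_\ell|) \leq \sigma,
\]
so both nonnegative quantities are bounded by $\sigma$. Consequently $|D(x)|\leq C\sigma^{2(q-1)/(2-q)}$ throughout the overlap. A standard spherical-cap calculation shows that two balls of radius $R^*$ with centers at distance $\geq 2R^*-\sigma$ have intersection of Lebesgue measure at most $C\sigma^{(N+1)/2}$.

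Combining the pointwise bound with the volume estimate, on each relevant pair
\[
\|D\|_{L^r(B(x_j, R^*)\cap B(x_\ell, R^*))} \leq C\,\sigma^{\frac{2(q-1)}{2-q}+\frac{N+1}{2r}},
\]
with the convention that the second exponent is $0$ when $r=\infty$. Since at every point only one such pair contributes to $D$, integrating $|D|^r$ over $B(x_i,2R_0)$ amounts to summing these local integrals over the at most $O(k_0^2)$ pairs whose supports meet $B(x_i,2R_0)$; the combinatorial factor is absorbed into $C=C(q,N,k_0)$ (and for $r=\infty$ one simply takes the maximum). The main conceptual step is the boundary-decay bound $w(y)\lesssim\sigma^{2/(2-q)}$ inside the overlap, which depends crucially on the sharp asymptotic in Lemma~\ref{lemmaR*}(iii); the rest is the subadditivity inequality and the routine cap volume computation.
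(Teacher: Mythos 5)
Your proof is correct and follows essentially the same route as the paper's: identify the support of the defect as the union of the pairwise lenses $\overline{B(x_j,R^*)\cap B(x_\ell,R^*)}$, use the boundary asymptotics from Lemma~\ref{lemmaR*}(iii) to get the pointwise bound $|D|\lesssim\sigma^{2(q-1)/(2-q)}$ (via the elementary triangle-inequality observation $(R^*-|x-x_j|)+(R^*-|x-x_\ell|)\le\sigma$), bound the lens volume by $C\sigma^{(N+1)/2}$, and sum over the finitely many pairs with both centers in $B(x_i,3R_0)$. The only cosmetic difference is that you carefully write out the subadditivity inequality and count $O(k_0^2)$ relevant pairs, whereas the paper asserts the pointwise and measure bounds directly and quotes a cruder pair count, but the constant still depends only on $q,N,k_0$ in both cases.
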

\begin{proof} 
Since each $w(\cdot - x_j)$ is supported in $B(x_j, R^*)$, and  by \eqref{eq8}, the support of the function
$$h:=\Big(\sum_{j\geq1}w(\cdot-x_j)\Big)^{q-1}-\sum_{j\geq1}w(\cdot-x_j)^{q-1} $$
is the union of
\[
A_{j\ell}:= \overline{B(x_j,R^*)\cap B(x_\ell,R^*)}, \ \ j\neq \ell.
\]
By Lemma \ref{lemmaR*} (iii) and $|x_j-x_\ell|\geq 2R^*-\sigma$ for $j\neq \ell$, we conclude that $\sup_{A_{j\ell}} |h|\leq C_{q,N}  \sigma^\frac{2(q-1)}{2-q}$ and  the measure of each $A_{j\ell}$ is less than 
$C_{q,N}\sigma^\frac{N+1}2$ for some   constant $C_{q,N}>1$ depending only on $q, N$.
Therefore,
the conclusion holds for $r=+\infty$. For $r\in[1,+\infty)$
and each $j\neq \ell$,
\[ \int_{A_{j\ell}}|h|^r
\leq C_{q,N}^{r+1} \sigma^{\frac{2r(q-1)}{2-q}+\frac{N+1}{2}}. 
\]
By \eqref{eq 9}  
we know that $B(x_i,2R_0)$ intersects   at most  $k_0-1$ distinct sets $A_{j\ell}$. Therefore, we conclude the proof.
\end{proof}

 \begin{lemma}\label{lem2.2}
Let   $ \{x_j^n\}_{j\geq 1}$  be a sequence of point  sets  satisfying
$\liminf_{n\to+\infty}\inf_{i\neq j}|x_i^n-x_j^n| \geq 2R^*$, and let $u_n \in H_{loc}^1(\R^N)$
satisfying  $ u_{n}(\cdot + x_{i_n}^n) \rightharpoonup v$ weakly in \(H^1(B(0, R^* + d))\)   for some $d>0$ and $v\in H^1(B(0, R^*+d))$.
Assume that    $u_n =0$ in $B(x_{i_n}^n, R^*+d)\setminus \bigcup_{j\in \mathscr I_{i_n}^n} \overline{B(x^n_j, R_n)}$
where \(R_n > 0\), \(R_n \to R^*\) as \(n \to \infty\), and \(\mathscr{I}_{i_n}^n = \{ j \mid x_j^n \in B(x_{i_n}^n, 3R_0) \}\).  
Then
$v|_{B(0, R^*)} \in H_0^1(B(0, R^*))$.
\end{lemma}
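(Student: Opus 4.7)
The plan is to pass to a subsequence that stabilizes the local geometry around $x_{i_n}^n$, identify the limiting essential support of $V_n := u_n(\,\cdot\,+x_{i_n}^n)$ as a configuration of (at most) mutually tangent closed balls of radius $R^*$, and then deduce that $v|_{B(0,R^*)}$ has zero trace on $\partial B(0,R^*)$ via an \emph{exterior} trace argument: the exterior trace vanishes off a null set, and for an $H^1$ function the interior and exterior traces across a smooth interior hypersurface coincide.

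Since $\#\mathscr{I}_{i_n}^n\leq k_0$ by \eqref{eq 9}, after a first subsequence I can assume this cardinality is a fixed integer $m\leq k_0$, enumerate the elements $j_0=i_n,j_1,\dots,j_{m-1}$, set $y_l^n:=x_{j_l}^n-x_{i_n}^n$ (so $y_0^n=0$ and $|y_l^n|\leq 3R_0$), and then extract a further subsequence with $y_l^n\to y_l^\infty$, where $|y_l^\infty|\geq 2R^*$ for $l\geq 1$ by the liminf separation hypothesis. The pointwise input is that $V_n$ vanishes on $B(0,R^*+d)\setminus\bigcup_l\overline{B(y_l^n,R_n)}$. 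By Rellich, $V_n\to v$ strongly in $L^2_{\mathrm{loc}}(B(0,R^*+d))$. Define the open set
\[
G:=B(0,R^*+d)\setminus\Big(\overline{B(0,R^*)}\cup\bigcup_{l\geq 1}\overline{B(y_l^\infty,R^*)}\Big).
\]
For any compact $K\subset G$, the positive distance of $K$ from the closed ball configuration, together with $R_n\to R^*$ and $y_l^n\to y_l^\infty$, forces $V_n\equiv 0$ on $K$ for $n$ large; exhausting $G$ by such compacts gives $v\equiv 0$ a.e.\ on $G$.

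The decisive step is the trace argument. Because $|y_l^\infty|\geq 2R^*$, each $\overline{B(y_l^\infty,R^*)}$ is either disjoint from or externally tangent to $\overline{B(0,R^*)}$, so
\[
\partial B(0,R^*)\cap\bigcup_{l\geq 1}\overline{B(y_l^\infty,R^*)}
\]
consists of at most $m-1$ tangent points and thus has zero $(N-1)$-dimensional Hausdorff measure. At every other $x\in\partial B(0,R^*)$, a small outer neighborhood (in $\{|y|>R^*\}$) lies inside $G$, so $v\equiv 0$ there; hence the exterior trace of $v$ on $\partial B(0,R^*)$ is zero a.e. Since $v\in H^1(B(0,R^*+d))$ and $\partial B(0,R^*)$ is a smooth interior hypersurface, the interior and exterior traces agree a.e., so the interior trace of $v|_{B(0,R^*)}$ vanishes a.e., which is exactly $v|_{B(0,R^*)}\in H_0^1(B(0,R^*))$. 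The conclusion descends to the full sequence because the weak limit $v$ is pre-specified and subsequence-independent.

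The main obstacle is genuine tangency (when $|y_l^\infty|=2R^*$), in which case $v$ can legitimately be nonzero immediately outside $B(0,R^*)$ on the neighboring bump, so one cannot produce $v|_{B(0,R^*)}\in H_0^1$ by simply extending by zero to a globally $H^1$ function. The exterior-trace approach handles this cleanly precisely because the tangency locus is $(N-1)$-dimensionally negligible.
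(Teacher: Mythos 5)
Your proof is correct, and it takes a genuinely different route from the paper's. The paper's argument rescales $u_n$ by $R^*/R_n$ so that the supports become a union of balls of radius exactly $R^*$, then invokes the boundedness and weak continuity of the trace operator $T:H^1(B(0,R^*+d/2))\to H^{1/2}(\partial B(0,R^*))$; it then localizes the trace away from a small set $S_\sigma$ near the tangency points, uses the explicit measure estimate $|S_\sigma|\lesssim\sigma^{(N-1)/2}$, pairs $T\widetilde u_n$ against a test function, and sends $n\to\infty$ followed by $\sigma\to 0$. You skip the rescaling entirely: by passing to a subsequence that stabilizes the local geometry, you identify the limiting essential support explicitly as a configuration of pairwise non-overlapping (at most tangent) closed balls, observe that $v$ vanishes a.e.\ outside this configuration, and then conclude via the exterior-trace argument together with the standard fact that the interior and exterior traces of an $H^1$ function across a smooth interior hypersurface coincide a.e. Your route buys a conceptually cleaner handling of the tangency points (the locus is explicitly a finite set, hence $(N-1)$-negligible for $N\geq 2$, with no quantitative $\sigma$-estimate required) and sidesteps the slightly delicate verification that the rescaling preserves the weak $H^1$ limit; the paper's route buys a more quantitative, estimate-driven argument that does not require extracting a geometry-stabilizing subsequence or invoking abstract trace-matching. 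Both proofs are sound.
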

\begin{proof}
   Assume without loss of generality that  $x_{i_n}^n=0$.
  To show  $v|_{B(0, R^*)} \in H_0^1(B(0, R^*))$, it suffices to show the trace 
  $Tv=0$,  where $T:H^1(B(0, R^*+d))\to H^{1/2}(\partial B(0, R^*))$
    is
    the trace operator.
    
  Set 
  $\widetilde u_n (x)= u_n(\frac{R_n}{R^*}x)$, and $\widetilde x_j^n = \frac{R^*}{R_n} x_j^n$, we know 
  $\widetilde u_n (x) \rightharpoonup v$ in $H^1(B(0, R^*+d/2))$ 
 and 
 $\widetilde u_n =0$ in $B(0, R^*+d/2)\setminus \cup_{j\in\mathscr I_{i_n}^n}  \overline{B(\widetilde x^n_j, R^*)}$.
Since $T$ is a bounded linear operator,
 $T\widetilde u_n \rightharpoonup Tv$ in $H^{1/2}(\partial B(0, R^*))$.
By \eqref{eq 9}, we have the fact that $\# \mathscr{I}_{i_n}^n\leq k_0$. So we can assume without loss of generality that 
$\{\lim_{n\to\infty} \widetilde x_j^n \mid j\in \mathscr I_{i_n}^n\}=\{x_0,x_1,\cdots,x_{k-1}\}$ with $x_0=0$ and $k\leq k_0$. Since
\[\liminf_{n\to+\infty}\inf_{i\neq j}|\widetilde x_i^n-\widetilde x_j^n|=\liminf_{n\to+\infty}\frac{R^*}{R_n}\inf_{i\neq j}|x_i^n-x_j^n| \geq 2R^*,\]
we have $|x_j|\geq 2R^*$, $j=1,\cdots,k-1$.

Let $\sigma>0$ be any fixed small constant. 
Set $S_\sigma=\partial B(0, R^*)\cap \left(\bigcup_{j=1}^{k-1} B(x_j, R^*+\sigma)\right)$. By $|x_j|\geq 2R^*$ for $j=1,\cdots,k-1$, the $N-1$ dimensional Hausdorff measure for $S_\sigma$ is bounded by $C\sigma^\frac{N-1}2$.
Moreover,  it holds
$\widetilde u_n=0$ in $B(0, R^*+d/2)\setminus \left(\overline{B(0,R^*)}\cup \bigcup_{j=1}^{k-1}B(x_j,R^*+\sigma/2)\right)$ for large $n$. And hence,  for large $n$,
\begin{equation}\label{eq 10}
\|T\widetilde u_n\|_{L^2(\partial B(0,R^*)\setminus S_\sigma)}\leq C(\sigma)
\|\widetilde u_n\|_{H^1\left(B(0, R^*+d/2)\setminus(\overline{ B(0,R^*)}\cup \bigcup_{j=1}^{k-1}B(x_j,R^*+\sigma/2))\right)}=0.
\end{equation}
On the other hand, there are positive constants $C, C'$ indendent of $\sigma$ such that, for all $n$,
\begin{equation}\label{eq 11}
\|T\widetilde u_n\|_{L^2(\partial B(0,R^*))}\leq C
\|\widetilde u_n\|_{H^1( B(0,R^*))}\leq C'.
\end{equation} 
 Then for any $\psi\in C^\infty(\partial B(0, R^*))$, we have by \eqref{eq 10} and \eqref{eq 11} that
 \[
 \int_{\partial B(0, R^*)} |T\widetilde u_n \psi| = \int_{S_\sigma} |T\widetilde u_n \psi |
 \leq  \|T\widetilde u_n\|_{L^2(\partial B(0, R^*))} \|\psi\|_{L^\infty(\partial B(0, R^*))}  |S_\sigma|^\frac12 \leq C'\|\psi\|_{L^\infty(\partial B(0, R^*))}\sigma^{\frac{N-1}4}.
 \]
Therefore, for any $\psi\in C^\infty(\partial B(0, R^*))$ \[\int_{\partial B(0, R^*)} Tv \psi =\lim_{n\to+\infty} \int_{\partial B(0, R^*)} T\widetilde u_n \psi \leq C'\|\psi\|_{L^\infty(\partial B(0, R^*))}\sigma^{\frac{N-1}4}.
\] 
Recalling that $\sigma>0$ could be any fixed constant and the constant $C'$ is indendent of $\sigma$, we get $\int_{\partial B(0, R^*)} Tv \psi=0$ and hence \(Tv = 0\) in \(L^2(\partial B(0, R^*))\). Therefore, \(v|_{B(0, R^*)} \in H_0^1(B(0, R^*))\).  
\end{proof}
Let $\delta$, $\sigma_0$ and $\rho$ be constants defined at the beginning of this section.
Adopting the notion from  \cite{ceramiInfinitelyManyPositive2013}, we define the concept of an emerging function as follows:

For all functions \(u \in H^1_{loc}(\R^N)\), we set
\[u^{\delta}(x):= {(u - \delta)}^{+}(x),\quad u_{\delta}(x):= u(x) - u^{\delta}(x),\]
and we call \(u^{\delta}\) the {\it emerging part} of \(u\) above \(\delta\) and \(u_{\delta}\) the  {\it submerged part} of \(u\) under \(\delta\).
We say $u\in H^1_{loc}(\R^N)$ is {\it emerging around $\{x_i\}_{i\geq 1}$} if
$$u^\delta=\sum_{i\geq 1} u_i^\delta,\ \ \text{where } u_i^\delta\geq (\not\equiv)0,\ u_i^\delta\in H_0^1(B(x_i,\rho)),\ B(x_i,\rho)\cap B(x_j,\rho)= \emptyset,\ i\neq j.$$
In the following proofs, the emergence of \(u\) around \(\{x_j\}_{j \geq 1}\) implies that the points \(\{x_j\}\) are pairwise separated, satisfying \( |x_i - x_j| \geq 2\rho = 2R^* - 6\sigma_0 \) for any \( i \neq j \).

Given a function $u \in H^1_{loc}(\mathbb{R}^N)$ that emerges around the points $\{x_i\}_{i\geq 1}$, 
we define the notion of the {\it local barycenters} of $u$ as
\[{\beta}_{i}(u) = \frac{1}{{\| u_{i}^{\delta} \|}_{2}^{2}}\int_{\R^N}( x {-} x_{i} ){( u_{i}^{\delta}(x) )}^{2}dx,\quad  i\geq 1.\]
We analyze candidate solutions with prescribed local barycenters, which introduces Lagrange multipliers into the framework. The following lemma establishes an initial estimate for these multipliers:
\begin{lemma}\label{lem2-3}
  Assume that $u \in H^1_{loc}(\mathbb{R}^N)$  emerges around the points $\{x_j\}_{j\geq 1}$, and for some $h\in H^{-1}(B(x_i, \rho))$ and $\lambda \in \R^N$, the equation
  \[
 \begin{aligned}
   -\Delta u -u +|u|^{q-2}u=\lambda\cdot (x-x_i-\beta_i(u)) u_{i}^{\delta}+h \end{aligned} 
  \] holds weakly in  $B(x_i, \rho)$.
Then there exist constants $\e_0\in(0,1)$ and $ C>0$ such that
\begin{equation}\label{lamb}
  |\lambda|\leq C\left(\|u(\cdot+x_i)-w \|_{L^2(B(0,\rho))} +\|h\|_{H^{-1}(B(x_i,\rho))}\right)
\end{equation}
provided that
$\|u(\cdot+x_i)-w\|_{L^2(B(0,\rho))}\leq \e_0$.
Furthermore, if 
$v \in H^1_{loc}(\mathbb{R}^N)$  emerges around the points $\{x_j\}_{j\geq 1}$, and for some $\bar h\in H^{-1}(B(x_i, \rho))$ and $\bar\lambda \in \R^N$, the equation
\[
   -\Delta v -v +|v|^{q-2}v=\bar \lambda\cdot (x-x_i-\beta_i(v)) v_{i}^{\delta}+\bar h
\] is satisfied,
then
\begin{equation}\label{lambdabar}
  |\lambda-\bar\lambda|\leq C\left( (1+|\bar\lambda|)\|u-v\|_{L^2(B(x_i,\rho))}   +\|h-\bar h\|_{H^{-1}(B(x_i,\rho))}\right)
\end{equation}
provided that $\|v(\cdot+x_i)-w\|_{L^2(B(0,\rho))}\leq \e_0$ and $\inf_{B(x_i,\rho)} v\geq \delta/2$. Here $\e_0>0$ and $ C>0$ are independent of $u$ and $v$.
\end{lemma}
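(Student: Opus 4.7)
The plan is to extract the Lagrange multipliers $\lambda,\bar\lambda$ by testing the equations against the natural ``dual'' vector fields $\varphi_k=(x-x_i-\beta)_k u_i^\delta$, $k=1,\dots,N$, associated with the barycenter constraint, and comparing with the analogous identity for $w$ (where no Lagrange multiplier is present). Since $w\in C^2(\R^N)$ satisfies $-\Delta w-w+w^{q-1}=0$ in all of $\R^N$ (Lemma \ref{lemmaR*}), testing this identity against $y_k w^\delta$ (with $y=x-x_i$) produces zero, while by radial symmetry of $w^\delta$ the ``mass'' matrix
\[
(M_w)_{jk}:=\int y_j y_k (w^\delta)^2\,dy=\tfrac1N\Big(\int|y|^2 (w^\delta)^2\Big)\delta_{jk}
\]
is a positive multiple of the identity. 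The corresponding matrix $(M_u)_{jk}=\int(y-\beta)_j(y-\beta)_k(\tilde u^\delta)^2$ is thus a small perturbation of $M_w$ once $\|\tilde u-w\|_{L^2}$ is small and so is uniformly invertible; inverting it will give the bound on $\lambda$.

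For the first estimate I would shift via $\tilde u(y)=u(y+x_i)$, test the $\tilde u$-equation against $\varphi_k=(y-\beta)_k\tilde u^\delta$ and the $w$-equation against the same $\varphi_k$, then subtract to obtain
\[
\int\nabla(\tilde u-w)\cdot\nabla\varphi_k-\int(\tilde u-w)\varphi_k+\int(\tilde u^{q-1}-w^{q-1})\varphi_k=\sum_j\lambda_j(M_u)_{jk}+\langle\tilde h,\varphi_k\rangle.
\]
The nonlinear term is controlled by the key pointwise inequality
\[
|\tilde u^{q-1}-w^{q-1}|\leq \delta^{q-2}|\tilde u-w|\qquad\text{on}\ \{\tilde u\geq\delta\},
\]
obtained by writing the difference as $(q-1)\int_0^1\big((1-t)w+t\tilde u\big)^{q-2}dt\cdot(\tilde u-w)$ and using $(1-t)w+t\tilde u\geq t\delta$ for $w\geq 0$. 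The gradient pairing, naively of $H^1$ size, is reduced to $L^2$ size either by an integration by parts (using that the support of $\tilde u^\delta$ lies strictly inside $B(0,\rho)$ once elliptic $L^\infty$-regularity upgrades the $L^2$-closeness $\|\tilde u-w\|_{L^2}\leq\e_0$, so boundary contributions vanish) or by a Caccioppoli-type bootstrap estimate. Combining these with the invertibility of $M_u$ yields $|\lambda|\leq C(\|\tilde u-w\|_{L^2(B(0,\rho))}+\|h\|_{H^{-1}})$.

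For the comparison estimate I would subtract the $u$- and $v$-equations and test against $\varphi_k=(x-x_i-\beta_v)_k v_i^\delta$, now built from $v$ so that the hypothesis $\inf_{B(x_i,\rho)} v\geq\delta/2$ can be exploited to obtain a uniform Lipschitz bound $|u^{q-1}-v^{q-1}|\leq C(\delta)|u-v|$ on $\supp\varphi_k$ (via a case-split on $u\geq v/2$ versus $u<v/2$, where the latter forces $|u-v|\geq\delta/2$ and hence still yields a linear bound). Decomposing the RHS as $(\lambda-\bar\lambda)\cdot A_v+\lambda\cdot(A_u-A_v)+(h-\bar h)$ with $A_z=(x-x_i-\beta_z)z_i^\delta$, the first piece tests to $(\lambda-\bar\lambda)^T M_v e_k$ with $M_v$ uniformly invertible, while the cross term satisfies $|\!\int(A_u-A_v)\varphi_k|\leq C\|u-v\|_{L^2}$ using $|u_i^\delta-v_i^\delta|\leq|u-v|$ pointwise and a Lipschitz bound for $\beta_u-\beta_v$ (with $\|v_i^\delta\|_2$ bounded below since $v$ is close to $w$). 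Substituting $|\lambda|\leq|\bar\lambda|+|\lambda-\bar\lambda|$ and absorbing the term $|\lambda-\bar\lambda|\cdot C\|u-v\|_{L^2}$ onto the left (valid for $\|u-v\|_{L^2}$ small, which is the regime in which the estimate is meaningful) yields the stated bound.

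The main technical obstacle is the singular sublinearity $s\mapsto s^{q-1}$ near $s=0$, whose Lipschitz constant blows up: the resolution is to restrict every nonlinear integration to supports where the relevant argument is bounded below by $\delta$ or $\delta/2$, combined with the integrated mean-value representation, producing the clean constant $\delta^{q-2}$. A secondary subtlety is the $H^1$-versus-$L^2$ mismatch in the gradient pairing, addressed by the IBP/Caccioppoli argument above; the distributional surface contribution from $\{\tilde u=\delta\}$ appearing in $\Delta\tilde u^\delta$ is finite by elliptic regularity for $\tilde u$ on $\{\tilde u\geq\delta\}$. A final delicate point is that in the comparison estimate $u$ is not a priori close to $w$, so the factor $|\lambda|$ in the cross term must be re-expressed through $|\bar\lambda|+|\lambda-\bar\lambda|$ and the latter absorbed, which is the structural reason for the $(1+|\bar\lambda|)$ prefactor in the conclusion.
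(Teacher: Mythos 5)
The single substantive difference between your plan and the paper's proof is the choice of test function, and that difference is where your argument breaks down. You propose to test the difference equation against $\varphi_k=(x-x_i-\beta)_k\,u_i^\delta$ (resp.\ $v_i^\delta$), which is the ``natural'' dual field of the barycenter constraint but is built from the unknown $u$ and is only $H^1$. The paper instead tests against $\varphi=(\lambda-\bar\lambda)\cdot x\,\psi$ with $\psi\in C_0^\infty(B(0,\rho))\setminus\{0\}$ a \emph{fixed, smooth, radial, nonnegative} cutoff. That choice eliminates both of the obstacles you identify and then try to paper over. First, because $\varphi$ is smooth and compactly supported, the paper integrates by parts \emph{fully} and writes the left-hand side as $\int(u-v)(-\Delta\varphi-\varphi)+\int(|u|^{q-2}u-v^{q-1})\varphi$, so only $\|u-v\|_{L^2}$ and the fixed quantity $\|\varphi\|_{C^2}\sim|\lambda-\bar\lambda|$ appear; the ``$H^1$-versus-$L^2$ mismatch'' you worry about simply never arises. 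With your $\varphi_k$, the pairing $\int\nabla(u-w)\cdot\nabla\varphi_k$ is of $H^1$ size and cannot be moved onto $\varphi_k$: $\Delta u_i^\delta$ is a genuine distribution carrying a surface contribution on $\{u=\delta\}$, and nothing in the hypotheses (only $L^2$-closeness of $u$ to $w$, and $h\in H^{-1}$) controls it. Your appeal to elliptic $L^\infty$-regularity is not available here, because the forcing $h$ is merely in $H^{-1}$, not $L^p$; and your ``Caccioppoli bootstrap'' would, even if carried out, produce a bound for $\|\nabla(u-w)\|_{L^2}$ that itself contains $|\lambda|$, and the resulting self-referential inequality does not obviously absorb. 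Second, and independently, even the term $\langle h,\varphi_k\rangle$ is not bounded by $\|h\|_{H^{-1}}$ times a controlled quantity, since $\|\varphi_k\|_{H^1}$ contains $\|\nabla u_i^\delta\|_{L^2}$, which is not controlled by $\|u(\cdot+x_i)-w\|_{L^2}\le\e_0$. With the paper's $\psi$, the test function's $H^1$-norm is a fixed constant times $|\lambda-\bar\lambda|$.

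The rest of your plan is sound and parallels the paper: the radial mass identity $\int y_j y_k (w^\delta)^2 = \frac{1}{N}\delta_{jk}\int|y|^2(w^\delta)^2$ reappears as $\int w^\delta\psi((\lambda-\bar\lambda)\cdot x)^2 = \frac{|\lambda-\bar\lambda|^2}{N}\int w^\delta\psi|x|^2$; the comparison $|u^\delta-w^\delta|\le|u-w|$ drives the perturbation argument for the mass term; your mean-value derivation of $|u^{q-1}-w^{q-1}|\le\delta^{q-2}|u-w|$ on $\{u\ge\delta\}$ is correct and plays the same role as the paper's inequality $||u|^{q-2}u-v^{q-1}|\le C\delta^{q-2}|u-v|$ (proved from $\inf v\ge\delta/2$ and $|t^{q-1}-1|\le|t-1|$); and your $(1+|\bar\lambda|)$-prefactor bookkeeping via $|\lambda|\le|\bar\lambda|+|\lambda-\bar\lambda|$ matches the paper's. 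The fix you need is to replace $u_i^\delta$ (resp.\ $v_i^\delta$) in the test function by a fixed smooth radial $\psi$; once that is done, the rest of your outline goes through.
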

\begin{proof}
 Without loss of generality we assume $x_i=0$. Then in $B(0,\rho)$, it holds
\begin{equation}\label{eq5}
\begin{aligned}
-\Delta (u-v)- (u-v)+|u|^{q-2}u-v^{q-1}
=& (\lambda-\bar\lambda)\cdot(x-\beta_i(u))u_i^\delta
 +\bar\lambda\cdot(\beta_i(v)-\beta_i(u))v_i^\delta\\
&+ \bar\lambda\cdot(x-\beta_i(u))(u_i^\delta-v_i^\delta) +h-\bar h.
\end{aligned}
\end{equation}
In $B(0,\rho)$, since $v \geq \delta/2$, we have the estimate
 \begin{equation}\label{a.e.}
   ||u|^{q-2}u-v^{q-1}|\leq v^{q-1}| |u/v|^{q-1}-1|\leq C v^{q-1}|u/v-1|\leq C\delta^{q-2}|u-v|.
 \end{equation}
Moreover, according to the definition of $\beta_i$, we have   
$|\beta_i(u)-\beta_i(v)| \leq C\|u-v\|_{L^2(B(0,\rho))}$, and $|\beta_i(u)|=|\beta_i(u)-\beta_i(w)| \leq C\e_0$.

Set  $\varphi=(\lambda-\bar \lambda) \cdot x \psi$, where $\psi\in C_0^\infty(B(0,\rho))\setminus\{0\}$ is a radial nonnegative function.
Test the equation \eqref{eq5} against $\varphi$ and integrate over $B(0,\rho)$. We   obtain by \eqref{a.e.} that
 the left-hand side is
 \[\int_{B(0,\rho)}(u-v)(-\Delta \varphi-\varphi) +(|u|^{q-2}u-v^{q-1}) \varphi=
 O(|\lambda-\bar\lambda|\|u-v\|_{L^2(B(0,\rho))}),
 \] 
 and by  $|u^\delta-v^\delta|\leq |u-v|$ that the right-hand side is
 $$
  \begin{aligned}
 \int_{B(0,\rho)}u_i^\delta\psi((\lambda-\bar \lambda)\cdot x)^2 +O(\e_0|\lambda-\bar\lambda|^2|)+ O\left( |\lambda-\bar\lambda|(|\bar\lambda|\|u-v\|_{L^2(B(0,\rho))}+\|h-\bar h\|_{H^{-1}(B(0,\rho))})\right).
  \end{aligned}
  $$
Since $|u^\delta-w^\delta|\leq |u-w|$, the leading term equals
\[
 \int_{B(0,\rho)}  w^\delta\psi((\lambda-\bar \lambda)\cdot x)^2  +O(\e_0|\lambda-\bar\lambda|^2) =\frac{|\lambda-\bar\lambda|^2}{N} 
 \int_{B(0,\rho)}w^\delta\psi  |x|^2 +O(\e_0|\lambda-\bar\lambda|^2).
 \]
Then we obtain \eqref{lambdabar} if $\e_0$ is fixed sufficiently small.
By setting $v = w$ in \eqref{eq5}, we obtain $\bar\lambda = 0$ and $\bar h = 0$; thus, \eqref{lamb} becomes a special case of \eqref{lambdabar}.
\end{proof}

\begin{lemma}\label{lem2-4}
  Assume that $u,v \in H^1_{loc}(\mathbb{R}^N)$  emerge around the points $\{x_j\}_{j\geq 1}$.
  Then 
\[  \left|\int_{B(x_i,\rho)}
 2(x-x_i)v^\delta(u- v)-
   \beta_i(u)\|u_i^\delta\|_{L^2}^2+\beta_i(v)\|v_i^\delta\|_{L^2}^2\right|\leq \rho \|u-v\|_{L^2(B(0,\rho))}^2.
 \]
\end{lemma}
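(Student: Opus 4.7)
The plan is to reduce the estimate to a pointwise algebraic inequality. Translating, I may assume $x_i=0$. Since $u$ and $v$ both emerge around $\{x_j\}_{j\geq 1}$ and the balls $B(x_j,\rho)$ are disjoint, within $B(0,\rho)$ we have $u^\delta=u_i^\delta$ and $v^\delta=v_i^\delta$. Unfolding the definition of the local barycenter then gives
\[
\beta_i(u)\,\|u_i^\delta\|_{2}^{2}-\beta_i(v)\,\|v_i^\delta\|_{2}^{2}
=\int_{B(0,\rho)} x\bigl[(u^\delta)^2-(v^\delta)^2\bigr]\,dx,
\]
so the quantity to estimate equals $\displaystyle\int_{B(0,\rho)} x\cdot I(x)\,dx$, where
\[
I := 2v^\delta(u-v)-(u^\delta)^2+(v^\delta)^2.
\]

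Next, the plan is to rewrite $I$ in a form that makes the estimate transparent. Using $u-v=(u^\delta-v^\delta)+(u_\delta-v_\delta)$ and the factorization $(u^\delta)^2-(v^\delta)^2=(u^\delta-v^\delta)^2+2v^\delta(u^\delta-v^\delta)$, the two copies of $2v^\delta(u^\delta-v^\delta)$ cancel and I obtain
\[
I=2v^\delta(u_\delta-v_\delta)-(u^\delta-v^\delta)^2.
\]
I then claim the pointwise bound $|I|\le (u-v)^2$, which I will verify by a short case analysis according to whether each of $u,v$ lies above or below $\delta$. In the two cases where both values are on the same side of $\delta$, either $I=-(u-v)^2$ or $I=0$. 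In the mixed cases, e.g.\ $u\ge\delta>v$, I get $I=-(u-\delta)^2$, and since $(u-\delta)$ and $(\delta-v)$ are both nonnegative, $(u-v)^2=(u-\delta)^2+2(u-\delta)(\delta-v)+(\delta-v)^2\ge(u-\delta)^2$; the remaining case $v\ge\delta>u$ is analogous after expanding $I=(v-\delta)(v-\delta+2(\delta-u))$ and comparing with $(u-v)^2=(v-\delta)^2+2(v-\delta)(\delta-u)+(\delta-u)^2$.

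With this pointwise bound in hand, the conclusion is immediate: $|x|\le\rho$ on $B(0,\rho)$, so
\[
\left|\int_{B(0,\rho)} x\cdot I\,dx\right|\le \rho\int_{B(0,\rho)} |I|\,dx\le\rho\,\|u-v\|_{L^2(B(0,\rho))}^2,
\]
which is the desired inequality. I do not foresee a serious obstacle; the only subtle step is the algebraic identification of $I$ with $2v^\delta(u_\delta-v_\delta)-(u^\delta-v^\delta)^2$ and the brief case check establishing $|I|\le(u-v)^2$, both of which are routine once one exploits $v=v^\delta+v_\delta$ together with the fact that $v_\delta\equiv\delta$ on $\{v^\delta>0\}$.
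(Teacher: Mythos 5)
Your proof is correct and follows essentially the same route as the paper: you both reduce to the pointwise quantity $I=2v^\delta(u-v)-(u^\delta)^2+(v^\delta)^2$ (the paper works with $\mathcal R=-I$ and proves the slightly sharper $0\le\mathcal R\le(u-v)^2$), establish the bound $|I|\le(u-v)^2$ by the same elementary analysis, and finish with $|x-x_i|\le\rho$. One minor slip: in the case $v\ge\delta>u$ one has $I=-(v-\delta)\bigl(v-\delta+2(\delta-u)\bigr)$, not $+$, but since you only use $|I|$ the conclusion is unaffected.
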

 \begin{proof}

 Set
$\mathcal R=( u^\delta)^2-(v^\delta)^2-2v^\delta(u-v)$. By 
$|u^\delta-v^\delta|\leq |u-v|$, we have
$ 0 \leq   \mathcal R 
\leq (u-v)^2,
$ 
and hence 
\[
\int_{B(x_i, \rho)}|x-x_i|\mathcal R \leq \rho \|u-v\|_{L^2(B(0,\rho))}^2.
\]
Then the conclusion follows from  the fact
$2(x-x_i)v^\delta(u- v)-
\beta_i(u)\|u_i^\delta\|_{L^2}^2+\beta_i(v)\|v_i^\delta\|_{L^2}^2=-2(x-x_i)\mathcal R$.
 \end{proof}

\section{Local stability estimates}
In Section 3, we establish key \emph{qualitative local stability estimates} in region-wise maximum $H^1$-norms. These estimates provide precise control over the deviation of a candidate multi-bump function from a sum of translated ground states. Crucially, they quantify how this deviation is governed by the corresponding Euler--Lagrange residuals, and their uniformity with respect to the number of bumps is a pivotal feature. These stability results serve as the primary analytical tool in Section 5, where they are used to prove that the localized minimizers constructed in Section 4 are indeed solutions of equation \eqref{1.1} as the potential $K$ approaches 1.

\ \ \ \ \ \ \ \

For an  at most countable set $\{x_j\}=\{x_j\}_{j\geq 1}$, we set 
\begin{equation}\label{sigma}
   \sigma(\{x_j\})=(2R^*-\inf_{j\geq 1}|x_i-x_j|)^+ \mbox{ if } \#\{x_j\}\geq 2,
  \mbox{ and }
    \sigma(\{x_j\})=0\mbox{ if } \#\{x_j\}=1.
\end{equation}
We define the set for $k\in\N\setminus\{0\}$,
\[
  \mathcal K_k:=\Set{ \{x_j\}_{j=1}^k \subset \R^N | \sigma(\{x_j\}_{j=1}^k)\leq \sigma_0}.
\]
In our analysis,  we treat   elements in $ \mathcal K_k$ as   unordered sets,  independent of the indexing order.
Thus, if     $\boldsymbol{x}\in \mathcal K_{k_1}$ and $\boldsymbol{y}\in \mathcal K_{k_2}$ satisfy
\[
\inf\{|x_i-y_j| \ |\  x_i\in \boldsymbol{x}, y_j\in \boldsymbol{y} \}\geq 2R^*-\sigma_0,
\]
  the union
$\boldsymbol{x}\cup \boldsymbol{y}$ forms a well-defined element in $\mathcal K_{k_1+k_2}$.
When the topology in $\mathcal{K}_k$ is concerned, we use the metric 
\[
  \dist(\boldsymbol{x},\boldsymbol{y}):=\max_{x\in \boldsymbol{x}}\min_{y\in\boldsymbol{y}}|x-y|+
  \max_{y\in\boldsymbol{y}}\min_{x\in \boldsymbol{x}}|x-y|,
   \mbox{ for } \boldsymbol{x},\boldsymbol{y}\in \mathcal K_k.
\]

Let $k\in\N\setminus\{0\}$, $\boldsymbol x=\{x_j\}_{j=1}^k\in\mathcal K_k$ and
$d\geq 0$.  We use the notation $\sigma(\boldsymbol x)=\sigma(\{x_j\})$, where $\sigma(\{x_j\})$ is given in \eqref{sigma}, and we define
\[ A(\boldsymbol x,d):=  \bigcup_{j=1}^k B(x_j,R^*+d).\]
In $H_0^1(\A d)$, 
we define a  norm 
\[
\|u\|_{\boldsymbol x, d}=\max_{1\leq j\leq k}\|u\|_{H^1(B(x_j, R^*+d))},
\]
which is equivalent to the standard $H^1$ norm on  $H_0^1(\A d)$.
For $h\in H^{-1}(\A d)$, we define a norm
\[
\|h\|_{*, \boldsymbol x, d}=\max_{1\leq j\leq k} \|h\|_{H^{-1}(\A d\cap B(x_j, 2R_0))}.
\]
We also set
\[
H_{\boldsymbol x, d}:=\set{u\in H_0^1(\A d) |  u\geq 0 \text{ is emerging around $\boldsymbol x$},  \text{ and } \beta_j(u)=0, \ 1\leq j\leq k}.
\]
For nonnegative $u\in H_0^1(\A d)$ that is  emerging around $\boldsymbol x$, and $h \in H^{-1}(\A d)$, we define the region-wise projected semi-norm
\[
\begin{aligned}
\|h\|_{*,u}
&=\max_{1\leq j\leq k}\min_{\lambda_j\in\R^N}\|h- \lambda_j \beta_j'(u)\|_{H^{-1}(\A d\cap B(x_j, 2R_0))}\\
&=\max_{1\leq j\leq k}\min_{\lambda_j\in\R^N}\|h- \lambda_j\cdot (x-x_j)u^\delta_j\|_{H^{-1}(\A d\cap B(x_j, 2R_0))},
\end{aligned}
\]
where, by abuse of notation, the function $\lambda_j\cdot (x-x_j)u^\delta_j$ inside the norm is identified with the corresponding distribution via the $L^2$ inner product.
 Obviously,   
 \[ 
  \|h \|_{*,u}\leq
   \|h\|_{*,\boldsymbol x,d}.
 \]
And we have
\begin{equation}\label{K-1}
	\|(I^\infty)'(u)-I'(u)\|_{*,\boldsymbol x,d} 
	\leq C\|K-1\|_{p,loc}\|u\|_{x,d}.
\end{equation}
Moreover, by Lemma \ref{lem 2.3}, if $d\leq \sigma_0$,
\begin{equation}\label{Iinfw}
 \bigg\|(I^\infty)'\big(\sum_{j= 1}^k  w(\cdot-x_j)\big)\bigg\|_{*,\boldsymbol x,d} 
  \leq 
\max_{1\leq j\leq k}\left\lVert\bigg(\sum_{j\geq1}w(\cdot-x_j)\bigg)^{q-1}-\sum_{j\geq1}w(\cdot-x_j)^{q-1}\right\rVert_{L^r(B(x_j,2R_0))} \leq C \sigma(\boldsymbol x),
 \end{equation}
 where $r > 2N/(N+2)$ is such that $\frac{2(q-1)}{2-q}+\frac{N+1}{2r}\geq 1$.
\begin{proposition} \label{prop3.1}
There exist   $\varepsilon_1 \in(0,\e_0)$ and $\sigma_1\in(0,\sigma_0)$  and $C>0$ such that the following statement holds:  for any $d\in[0,\sigma_0]$,  any finite set $\boldsymbol x=\{x_j\}_{j= 1}^k\subset \R^N$ satisfying
  $\sigma(\boldsymbol x)\leq \sigma_1$, and any 
   $u\in H_{\boldsymbol x, d}$ satisfying
\[
\max_{1\leq j\leq k}\big\| u -   w(\cdot - x_j) \big\|_{L^2(B(x_j, R^*))} 
\leq \varepsilon_1,
\]
there holds
    \[  \big\|u-\sum_{j= 1}^k  w(\cdot-x_j)\big\|_{\boldsymbol x, d}\leq C \bigg\|(I^\infty)'(u)-(I^\infty)'\big(\sum_{j= 1}^k  w(\cdot-x_j)\big)\bigg\|_{*,u}.\]
\end{proposition}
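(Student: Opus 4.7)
The argument is by contradiction and compactness. Suppose the conclusion fails: for each $n$ we can find $\boldsymbol x_n = \{x_j^n\}_{j=1}^{k_n} \in \mathcal K_{k_n}$ with $\sigma(\boldsymbol x_n)$ arbitrarily small, $d_n \in [0,\sigma_0]$, and $u_n \in H_{\boldsymbol x_n, d_n}$ with $\max_j \|u_n - w(\cdot - x_j^n)\|_{L^2(B(x_j^n, R^*))} \leq \varepsilon_1$, yet
\[
\epsilon_n := \|u_n - W_n\|_{\boldsymbol x_n, d_n} > n\, \|(I^\infty)'(u_n) - (I^\infty)'(W_n)\|_{*, u_n},
\]
where $W_n := \sum_j w(\cdot - x_j^n)$. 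Set $v_n := (u_n - W_n)/\epsilon_n$, so $\|v_n\|_{\boldsymbol x_n, d_n} = 1$. Subtracting the distributional PDEs for $u_n$ and $W_n$ and dividing by $\epsilon_n$ yields the linearized equation
\[
-\Delta v_n - v_n + (q-1)\xi_n^{q-2}\, v_n = r_n,
\]
with $\xi_n$ a mean value between $u_n$ and $W_n$, and $\|r_n\|_{*, u_n} < 1/n$. The seminorm structure then supplies, for each $j$, a vector $\nu_j^n \in \R^N$ with $r_n - \nu_j^n\cdot(x - x_j^n)(u_n)_j^\delta \to 0$ in $H^{-1}(A(\boldsymbol x_n, d_n) \cap B(x_j^n, 2R_0))$, and the second part of Lemma \ref{lem2-3}, applied to the pair $u_n, W_n$, gives $|\nu_j^n| \leq C$ uniformly in $n$.

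Let $j_n$ realize the maximum in $\|v_n\|_{\boldsymbol x_n, d_n}$, translate to place $x_{j_n}^n$ at the origin, and pass to a subsequence along which $d_n \to d^*$, $\nu_{j_n}^n \to \mu$, the nearby centers $x_i^n - x_{j_n}^n$ ($i \in \mathscr I_{j_n}^n$) converge to limits $y_i$ with $|y_i| \geq 2R^*$, and $\tilde v_n := v_n(\cdot + x_{j_n}^n) \rightharpoonup v$ weakly in $H^1(B(0, R^* + d^*))$. To apply Lemma \ref{lem2.2} we sharpen the support control of Lemma \ref{R0} with $\kappa_n := \sup_{\R^N \setminus \bigcup_j B(x_j^n, R^*)} u_n$, which tends to $0$ by the $L^2$-closeness assumption combined with uniform $L^\infty$ bounds coming from the PDE. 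This gives $\supp u_n \subset \bigcup_j B(x_j^n, R^* + \kappa_n^{(2-q)/3})$ with $R_n := R^* + \kappa_n^{(2-q)/3} \to R^*$, so that $v|_{B(0,R^*)} \in H_0^1(B(0, R^*))$. Using Lemma \ref{lem 2.3} to handle the overlap error $W_n^{q-1} - \sum_j w(\cdot - x_j^n)^{q-1}$ and dominated convergence for $(q-1)\xi_n^{q-2}$ on compact subsets of $B(0, R^*)$, we pass to the limit to obtain
\[
-\Delta v - v + (q-1)\,w^{q-2}\,v = \mu \cdot x\, w^\delta \quad \text{weakly in } B(0, R^*).
\]

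Testing this limit equation against $\partial_\ell w$ and integrating by parts, the left-hand side vanishes: $\partial_\ell w$ satisfies the homogeneous linearized equation (differentiate the ground-state equation), while both $v$ and $\nabla w$ vanish on $\partial B(0, R^*)$ by Lemma \ref{lemmaR*}(iii). We get $\mu\cdot M = 0$, where $M_{k\ell} := \int_{B(0,R^*)} x_k\, w^\delta\, \partial_\ell w$, and the radial symmetry of $w$ (combined with $w' < 0$ on $(0, R^*)$) makes $M$ a nonzero scalar multiple of the identity matrix. Hence $\mu = 0$, and the limit equation becomes homogeneous; the nondegeneracy statement in Lemma \ref{lemmaR*}(iv) then forces $v = a \cdot \nabla w$ for some $a \in \R^N$. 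The barycenter constraint $\beta_{j_n}(u_n) = 0$ together with the small-overlap bound $|\beta_{j_n}(W_n)| \leq C\sigma(\boldsymbol x_n)$ (from the Lemma \ref{lem 2.3} estimates applied to $(W_n)_{j_n}^\delta - w(\cdot - x_{j_n}^n)^\delta$) and Lemma \ref{lem2-4} applied to $u_n$ and $W_n$ yield, after division by $\epsilon_n$ and passage to the limit, the orthogonality $\int x\, w^\delta v = 0$. Inserting $v = a\cdot\nabla w$ and invoking the same nonsingular matrix $M$ forces $a = 0$, whence $v \equiv 0$.

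It remains to upgrade $\tilde v_n \rightharpoonup 0$ to strong $H^1$ convergence on $B(0, R^* + d^*)$, contradicting $\|\tilde v_n\|_{H^1} = 1$. Rellich gives $\tilde v_n \to 0$ in $L^2$. Testing the rescaled equation against $\tilde v_n$,
\[
\int |\nabla \tilde v_n|^2 = \int \tilde v_n^2 \;-\; (q-1)\int \xi_n^{q-2}\,\tilde v_n^2 \;+\; \langle r_n, \tilde v_n\rangle,
\]
I expect \emph{this middle term to be the main obstacle}: the coefficient $\xi_n^{q-2}$ is singular in the region where $\xi_n \to 0$ near $\partial B(0, R^*)$, and neither dominated convergence nor a brute Hardy inequality with $w^{q-2}$ gives uniform control, because $w^{q-2} \sim (R^* - |x|)^{-2}$ is non-integrable on $B(0, R^*)$. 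The plan to defeat it is to combine the sharp support bound $\supp u_n \subset \bigcup B(x_j^n, R^* + \kappa_n^{(2-q)/3})$ from Lemma \ref{R0} with the boundary asymptotics $w(x) \sim C_q(R^* - |x|)^{2/(2-q)}$ from Lemma \ref{lemmaR*}(iii), so that on the thin layer where $\xi_n^{q-2}$ is large both $u_n$ and $W_n$ are of order $\kappa_n$: this lets us estimate $\int \xi_n^{q-2} \tilde v_n^2$ on the singular layer by a weighted Hardy-type quantity that vanishes with $\kappa_n$, while on the interior the convergence $\xi_n^{q-2} \to w^{q-2}$ together with the limit $v = 0$ suffices. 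Once this control is in hand, $\|\nabla \tilde v_n\|_{L^2} \to 0$, yielding the desired contradiction and completing the proof.
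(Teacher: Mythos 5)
Your overall plan --- contradiction, normalization, linearized equation with Lagrange multipliers, weak limit, nondegeneracy, barycenter constraint --- mirrors the paper's Proposition \ref{pro.lim} quite closely, and most of the intermediate steps (bounding $\nu_j^n$ via Lemma \ref{lem2-3}, $\mu = 0$ via testing against $\partial_\ell w$, $a = 0$ via Lemma \ref{lem2-4}) match the paper. The structural difference is in how the two arguments rule out the trivial weak limit. The paper inserts a preliminary \emph{Step 1} showing directly that $\liminf_n \max_j \|z_n\|_{L^2(B(x_j^n,\rho))} > 0$, so the weak limit $z$ is automatically nonzero and the barycenter orthogonality alone gives the contradiction; you instead allow $v = 0$ and then try to upgrade to strong convergence. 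That alternative route is workable in principle, but your execution of it has genuine gaps.

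First, the energy identity you write by ``testing against $\tilde v_n$'' is not available in that form: $\tilde v_n$ is supported on the translate of $A(\boldsymbol{x}_n, d_n)$, which may contain arbitrarily many balls, so $\int \tilde v_n^2$ and $\langle r_n, \tilde v_n \rangle$ are global quantities that need not vanish (the normalization and the residual bound are in the region-wise \emph{max} norms $\|\cdot\|_{\boldsymbol{x}_n, d_n}$ and $\|\cdot\|_{*,u_n}$, not in the global $H^1$ or $H^{-1}$ norms). You must test against $\psi^2\tilde v_n$ with a cut-off $\psi$ supported in $B(0,2R_0)$, as the paper does in eq.~\eqref{bn}; this produces extra terms $\int(|\nabla\psi|^2+\psi^2)\tilde v_n^2$ involving the $L^2$ norms on \emph{neighboring} balls $x_j^n$, $j\in\mathscr I_{j_n}^n$, which you have not controlled.

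Second, your worry about the term $(q-1)\int\xi_n^{q-2}\tilde v_n^2$ being the ``main obstacle'' is a red herring. Moving it to the left, the identity reads $\int|\nabla(\psi\tilde v_n)|^2 + \int b_n\psi^2\tilde v_n^2 = \int(|\nabla\psi|^2+\psi^2)\tilde v_n^2 + \langle\tilde r_n,\psi^2\tilde v_n\rangle$ with $b_n \geq 0$; both left-hand terms are nonnegative, so there is no need to bound $\int b_n\psi^2\tilde v_n^2$ from above --- one only needs the right-hand side to vanish. The real difficulty is the one you do not address: the right-hand side involves the $L^2$ norms of $\tilde v_n$ on \emph{all} of $\bigcup_{j\in\mathscr I_{j_n}^n}B(x_j^n,R^*+d_n)$, and Rellich at the single center $x_{j_n}^n$ does not control these. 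To close this step you would need to choose $j_n$ to realize $\max_j\|\tilde v_n\|_{L^2(B(x_j^n,R^*+d_n))}$ rather than the $H^1$ max (as you did), and then propagate: if that $L^2$ max vanishes, the cut-off energy estimate gives $\max_j\|\tilde v_n\|_{H^1}\to 0$, contradicting $\|\tilde v_n\|_{\boldsymbol{x}_n,d_n}=1$; if it does not vanish, the weak limit at that center is nonzero and the barycenter argument applies. This dichotomy is exactly what the paper's Step 1 packages cleanly, by exploiting the \emph{lower} bound $b_n \geq C\delta^{q-2}$ on the annulus $B(x_j^n,R^*+d_n)\setminus B(x_j^n,\rho)$ (both $u_n,v_n\leq\delta$ there) with $\delta$ fixed so small that $C\delta^{q-2}$ dominates the constant in the cut-off estimate; that is the crucial trick your proposal does not identify. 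Some of the auxiliary steps you invoke (Lemma \ref{lem2.2}, the sharpened support bound $\kappa_n$) are not needed in the paper's argument and rely on an $L^\infty$ bound ``coming from the PDE'' that the hypotheses of the proposition do not actually supply.
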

\begin{remark}
	(i) Since $H_0^1(\A d)$ is Hilbert space 
	under the inner product
	\[
	\langle u, v\rangle =\int_{\A d} (\nabla u\nabla v + uv)
	\]
	there is an 
	isometry $\mathcal L$ for $H^{-1}(\A d)$ to $H_0^1(\A d)$.
	Since 
	\[
	\|h\|_{H^{-1}(\A d\cap B(x_i, 2R_0))}^2 \leq
	\|\mathcal L h\|_{H^1(\A d\cap B(x_i, 2R_0))}^2 \leq 
	\sum_{j\in \mathscr{I}_i}
	\|\mathcal L h\|_{H^1(  B(x_i, R^*+d))}^2,
	\]
	we have by \eqref{eq 9}
	\[
	\|h\|_{*,\boldsymbol{x}, d} \leq k_0^\frac12\|\mathcal L h\|_{\boldsymbol{x}, d}.
	\]
	Therefore, for $\lambda_i$, $i=1,\dots,k$   such that 
	$\mathcal H\mathcal L h=\mathcal L h -\sum_{i=1}^k\lambda_i\mathcal L \beta_i'(u)$,
	we can obtain 
	\[
	\|h\|_{*,u}\leq  \|h-\sum_{i=1}^k\lambda_i\beta_i'(u)\|_{*,\boldsymbol{x},d}
	\leq  k_0^\frac12  \|\mathcal L h-\sum_{i=1}^k\lambda_i\mathcal L \beta_i'(u)\|_{\boldsymbol{x}, d}
	= k_0^\frac12\|\mathcal H\mathcal L h\|_{\boldsymbol{x}, d},
	\]
	where $\mathcal H=\mathcal H_u$ is the normal projection   from $H_0^1(\A d)$
	to $\set{v\in H_0^1(\A d) | \beta_j'(u)v=0, 1\leq j\leq k}$.
	Let us denote
	\[
	\nabla I^\infty(u) := \mathcal{L} I'(u), \qquad 
	\nabla_{\mathcal{H}} I^\infty(u) := \mathcal{H} \nabla I^\infty(u).
	\]
	Observe that for any \( u \in H_{\boldsymbol{x}, d} \), the support of \( \nabla I^\infty\bigl(\sum_{j=1}^k w_j\bigr) \) is disjoint from \( u_i^\delta \), and hence
	\[
	\nabla_{\mathcal{H}} I^\infty\Bigl(\sum_{j=1}^k w_j\Bigr) = \nabla I^\infty\Bigl(\sum_{j=1}^k w_j\Bigr),
	\quad \text{where } w_j = w(\cdot - x_j).
	\]
	Under this observation, the stability estimate can be reformulated as
	\[
	\Bigl\| u - \sum_{j=1}^k w(\cdot - x_j) \Bigr\|_{\boldsymbol{x}, d}
	\leq C \,
	\Bigl\| \nabla_{\mathcal{H}} I^\infty(u) - \nabla_{\mathcal{H}} I^\infty\Bigl( \sum_{j=1}^k w(\cdot - x_j) \Bigr) \Bigr\|_{\boldsymbol{x}, d}.
	\]
	(ii) 
	If $\sigma(\boldsymbol{x})=0$, then $|x_i-x_j|\geq 2R^*$ for $i\neq j$,
	and the fuction $ \sum w_j$
	is a critical point of the autonomous functional $I^\infty$.
	Regardless of 
	the nonsmoothness,
	by formal calculation  we have 
	\begin{align*}
		\| u - \sum_{j=1}^k w_j \|_{\boldsymbol{x}, d}
		\leq  
		C \| \nabla  I^\infty(u) - \nabla  I^\infty ( \sum_{j=1}^k w_j  )  \|_{\boldsymbol{x}, d} 
		\sim  \|D^2I^\infty(\sum_{j=1}^k w_j)(u-\sum_{j=1}^k w_j)\|_{\boldsymbol{x}, d}.
	\end{align*}
	Consequently, for any $\phi\in \mathcal H H_0^1(\A d)$   we have 
	\[
	\|\phi\|_{\boldsymbol{x}, d}\lesssim  \|D^2I^\infty(\sum w_j)\phi\|_{\boldsymbol{x}, d}.
	\]
	Since $\mathcal H H_0^1(\A d)$ is a subspace of codimension $kN$,
	this inequality expresses the  nondegeneracy of the multi-bump profile  $ \sum w_j$  modulo translations of individual bump $w_j$ —a direct generalization of the nondegeneracy of the single-bump ground state $w$. Thus, the stability result can be viewed as a nonlinear, quantitative extension of this nondegeneracy property to the multi-bump setting.
\end{remark}

Proposition \ref{prop3.1} is a special case of the following result.
\begin{proposition}\label{pro.lim} 
Let  $d_n\in[0, \sigma_0]$, $k_n\in \mathbb N\setminus\{0\}$, and
   $\{u_n\}, \{v_n\}\subset  H_0^1( A(\boldsymbol x_n,d_n))$ be sequences of nonnegative functions that  emerge  around
 $\boldsymbol{x}_n =\{x_j^n\}_{j= 1}^{k_n}  \subset \R^N$   such that  $\sigma(\boldsymbol{x}_n)\to 0$,
 $ \max_{1\leq j\leq k_n}\|u_{n}-  w(\cdot-x_{j}^n)\|_{L^2(B(x_j^n, R^*))}\to 0$,  and 
 $ \|v_{n}-\sum_{j\geq 1} w(\cdot-x_{j}^n)\|_{L^\infty(A(\boldsymbol x_n, 0))}\to 0$.
Suppose that for sequences $\lambda_j^n, \bar\lambda_j^n \in \mathbb{R}^N$, $j=1,\cdots,k_n$ and $h_n, \bar h_n \in H^{-1}(A(\boldsymbol x_n,  d_n))$, the functions $u_n$ and $v_n$ satisfy
\begin{alignat}{2}
  -\Delta u_n - u_n + u_n^{q-1}
  &= \sum_{j=1}^{k_n}\lambda_j^n \cdot (x - x_j^n - \beta_{j}(u_n)) (u_n)_j^\delta + h_n 
  &\quad&\text{in } H^{-1}(A(\boldsymbol x_n,  d_n)), \label{eq55'} \\
  -\Delta v_n - v_n +  v_n^{q-1} 
  &= \sum_{j=1}^{k_n} \bar\lambda_j^n \cdot (x - x_{j}^n - \beta_{j}(v_n)) (v_n)_{j}^\delta + \bar h_n 
  &\quad&\text{in } H^{-1}(A(\boldsymbol x_n,  d_n)), \label{eq56'}
\end{alignat}
where   $\|\bar h_n \|_{*, \boldsymbol x_n,d_n}\to 0$.
Then 
  there exists a constant $C > 0$, independent of $n$, such that
  \[
  \|u_n - v_n\|_{\boldsymbol x_n, d_n} 
  \leq C (\|h_n - \bar h_n\|_{*,\boldsymbol x_n,d_n} +\max_{1\leq j\leq k_n}|\beta_{j}(u_n)-\beta_{j}(v_n)|).
  \]
\end{proposition}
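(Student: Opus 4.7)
I argue by contradiction. If no such constant $C$ exists, then, passing to subsequences, we may find
\[
\epsilon_n := \|u_n - v_n\|_{\boldsymbol x_n, d_n} > 0, \qquad
\eta_n := \|h_n - \bar h_n\|_{*,\boldsymbol x_n, d_n} + \max_{1 \leq j \leq k_n}|\beta_j(u_n) - \beta_j(v_n)|,
\]
with $\eta_n/\epsilon_n \to 0$. Set $\phi_n := (u_n - v_n)/\epsilon_n$, so that $\|\phi_n\|_{\boldsymbol x_n, d_n} = 1$, realized at some index $i_n$. Translate by $-x_{i_n}^n$ and write $\tilde u_n, \tilde v_n, \tilde\phi_n$ for the shifted functions. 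The plan is to extract a weak $H^1$-limit $\phi^*$ of $\tilde\phi_n$, show via the nondegeneracy of $w$ (Lemma~\ref{lemmaR*}(iv)) together with the barycenter identity in Lemma~\ref{lem2-4} that $\phi^* \equiv 0$, and then upgrade to strong $H^1$-convergence on $B(0, R^*+d_n)$, contradicting the normalization.

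\textbf{Lagrange multipliers and the localized difference equation.}
Lemma~\ref{lem2-3} applied to $v_n$ (its hypothesis holds because $v_n \to w$ uniformly on each $B(x_j^n, R^*)$) yields $\bar\lambda_j^n \to 0$. Its second part, applied to the pair $(u_n, v_n)$ and using $\inf_{B(x_j^n,\rho)} v_n \geq \delta/2$ (which holds for large $n$ since $v_n \to w$ in $L^\infty(A(\boldsymbol x_n, 0))$ and $w > \delta$ on $\overline{B(0,\rho)}$), gives
\[
|\lambda_j^n - \bar\lambda_j^n| \leq C\bigl((1+|\bar\lambda_j^n|)\,\|u_n - v_n\|_{L^2(B(x_j^n,\rho))} + \|h_n - \bar h_n\|_{H^{-1}(B(x_j^n,\rho))}\bigr),
\]
so $\mu_j^n := (\lambda_j^n - \bar\lambda_j^n)/\epsilon_n$ is uniformly bounded in $n,j$. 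Because $\sigma(\boldsymbol x_n) \to 0$ and $d_n \leq \sigma_0$, one checks $|x_j^n - x_{i_n}^n| \geq 2R^* - \sigma_0 > \rho + R^* + d_n$ for $j \neq i_n$, so only the $j = i_n$ Lagrange term is supported in $B(x_{i_n}^n, R^* + d_n)$. Subtracting \eqref{eq55'} and \eqref{eq56'}, dividing by $\epsilon_n$, and expanding in $y = x - x_{i_n}^n$ coordinates yields
\[
-\Delta \tilde\phi_n - \tilde\phi_n + G_n = \mu_n \cdot (y - \beta_{i_n}(u_n))(\tilde u_n)^\delta + E_n + R_n,
\]
with $G_n := (\tilde u_n^{q-1} - \tilde v_n^{q-1})/\epsilon_n$, $\mu_n := \mu_{i_n}^n$, $R_n := (\tilde h_n - \tilde{\bar h}_n)/\epsilon_n$, and $E_n$ collecting lower-order remainders controlled by $|\bar\lambda_{i_n}^n|\to 0$, $|\beta_{i_n}(u_n)-\beta_{i_n}(v_n)|/\epsilon_n \to 0$, and $|(\tilde u_n)^\delta - (\tilde v_n)^\delta|/\epsilon_n \leq |\tilde\phi_n|$.

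\textbf{Weak limit, nondegeneracy, and barycenter cancellation.}
After further subsequencing, $d_n \to d_0 \in [0, \sigma_0]$, $\mu_n \to \mu^* \in \R^N$, and $\tilde\phi_n \rightharpoonup \phi^*$ in $H^1(B(0, R^*+d_0))$, strongly in $L^r_{loc}$ for $r < 2^*$. Writing $G_n = (q-1)\bigl(\int_0^1[(1-t)\tilde v_n + t\tilde u_n]^{q-2}dt\bigr)\tilde\phi_n$, dominated convergence on each $\{w \geq \eta\}$ for $\eta \downarrow 0$, combined with the Hardy-type integrability $\int_{B(0,R^*)} w^{q-2}(\phi^*)^2 < \infty$ (which uses $w^{q-2}\lesssim(R^*-|y|)^{-2}$ together with the zero trace of $\phi^*$ on $\partial B(0, R^*)$), yields in the limit
\[
-\Delta\phi^* - \phi^* + (q-1)w^{q-2}\phi^* = \mu^*\cdot y\,w^\delta \qquad \text{in } B(0, R^*).
\]
The zero-trace property follows from Lemma~\ref{lem2.2} once $\supp u_n \cup \supp v_n \subset \bigcup_j B(x_j^n, R_n)$ with $R_n\to R^*$, which in turn is extracted from a perturbed version of Lemma~\ref{R0} applied in the annular region where the Lagrange terms vanish and the source data $h_n, \bar h_n$ are controlled in the $*$-norm. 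Testing the limit equation against $\nabla w \cdot \alpha \in \ker L$ and integrating by parts gives $\mu^*\cdot M\alpha = 0$ for every $\alpha$, where $M_{ij} = \int y_i w^\delta \partial_j w = c\,\delta_{ij}$ with $c\neq 0$ by radial symmetry; hence $\mu^* = 0$. Lemma~\ref{lemmaR*}(iv) then yields $\phi^* = \nabla w \cdot \alpha$ for some $\alpha \in \R^N$. Finally, Lemma~\ref{lem2-4} applied at index $i_n$ and divided by $\epsilon_n$ gives
\[
\int_{B(0,\rho)} 2y\,(\tilde v_n)^\delta\,\tilde\phi_n\,dy = \frac{\beta_{i_n}(u_n)\|(u_n)_{i_n}^\delta\|_2^2 - \beta_{i_n}(v_n)\|(v_n)_{i_n}^\delta\|_2^2}{\epsilon_n} + O(\epsilon_n),
\]
whose right side tends to $0$ (using $\eta_n/\epsilon_n \to 0$ and $\|(u_n)_{i_n}^\delta\|_2, \|(v_n)_{i_n}^\delta\|_2 \to \|w^\delta\|_2$). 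Passing to the limit on the left yields $\int y\,w^\delta(\nabla w\cdot\alpha)\,dy = c\alpha = 0$, so $\alpha = 0$ and $\phi^* \equiv 0$.

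\textbf{Strong convergence and main obstacle.}
To contradict $\|\tilde\phi_n\|_{H^1(B(0,R^*+d_n))} = 1$, I upgrade $\tilde\phi_n \rightharpoonup 0$ to strong convergence by testing the equation against $\eta^2\tilde\phi_n$ for a cutoff $\eta$ with $\eta\tilde\phi_n \in H_0^1$ of the emerging domain. Using $\int G_n \eta^2 \tilde\phi_n \geq 0$ (monotonicity of $s\mapsto s^{q-1}$), the strong $L^2_{loc}$ decay from Rellich, and the smallness of $R_n, E_n$, one obtains $\|\tilde\phi_n\|_{H^1(B(0, R^*+d_n))} \to 0$, a contradiction. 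The chief difficulty lies in the previous step: passing the non-Lipschitz nonlinearity $G_n$ to the limit near $\partial B(0, R^*)$, where $(q-1)w^{q-2}$ is singular, and simultaneously securing the sharp support confinement of $u_n, v_n$ so that Lemma~\ref{lem2.2} applies and $\phi^*$ has zero trace on $\partial B(0, R^*)$. Both difficulties are dictated by the compact-support geometry peculiar to the sublinear problem, and the uniformity of the resulting estimate in $k$ reflects the fact that the entire blow-up analysis is performed one ball at a time.
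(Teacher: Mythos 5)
Your overall strategy mirrors the paper's: contradiction via blow-up normalization, Lagrange-multiplier bounds from Lemma~\ref{lem2-3}, extraction of a weak limit, identification of that limit as $\nabla w\cdot\tau$ via the nondegeneracy, and elimination of $\tau$ via the barycenter identity of Lemma~\ref{lem2-4}; the coercivity estimate you invoke for the ``strong convergence'' upgrade at the end is the same test-function argument the paper runs in its Step~1 (there used to show the weak limit is nontrivial rather than, as you do, to finish off a trivial limit). The two orderings are logically equivalent, and most of your steps are sound.

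There is, however, a genuine gap in how you obtain the weighted integrability $\int_{B(0,R^*)} w^{q-2}(\phi^*)^2<\infty$ needed to invoke Lemma~\ref{lemmaR*}(iv). You derive it from zero trace of $\phi^*$ on $\partial B(0,R^*)$ plus a Hardy inequality, and you get zero trace from Lemma~\ref{lem2.2} after asserting support confinement $\supp u_n\cup\supp v_n\subset\bigcup_j B(x_j^n,R_n)$ with $R_n\to R^*$, ``extracted from a perturbed version of Lemma~\ref{R0}.'' This cannot be done from the hypotheses of Proposition~\ref{pro.lim}. Lemma~\ref{R0} needs the pointwise subsolution condition $-\Delta v-a_1v+v^{q-1}\leq 0$ together with $v\leq\kappa$ in the complement of the inner balls; here $u_n$ and $v_n$ carry source terms $h_n,\bar h_n$ that are controlled only in $H^{-1}$-type $*$-norms, with no sign or pointwise bound, and the hypotheses give no $L^\infty$ smallness for $u_n$ on the annuli $B(x_j^n,R^*+d_n)\setminus B(x_j^n,\rho)$ (the $u_n$ hypothesis is only $L^2$ closeness to $w$ on $B(x_j^n,R^*)$; the $v_n$ hypothesis is $L^\infty$ closeness on $A(\boldsymbol x_n,0)$, which also does not reach the annuli). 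So the support confinement, and hence the zero trace, is not available, and your route to the weight bound breaks.

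The paper bypasses this entirely. After testing the rescaled difference equation against $\psi^2 z_n$ (your cutoff step), one reads off the a priori bound $\int\psi^2 b_n z_n^2\leq C$ where $b_n=(u_n^{q-1}-v_n^{q-1})/(u_n-v_n)\geq 0$. Since $b_n\to(q-1)w^{q-2}$ and $z_n\to z$ a.e.\ in $B(0,R^*)$, Fatou's lemma gives $\int_{B(0,R^*)}(q-1)w^{q-2}z^2\leq C$ directly — no trace, no support control, no Hardy inequality. This is the step you are missing; inserting it in place of your trace argument closes the gap, and the rest of your proof then goes through along the lines you indicate.
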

\begin{proof}[Proof of Proposition \ref{pro.lim}]
 Assume by contradiction that
\[
\|h_n - \bar h_n\|_{*,\boldsymbol x_n,d_n}  + \max_{1\leq j\leq k_n} |\beta_j(u_n) - \beta_j(v_n)| = o(1) s_n,
\]
where 
\[
s_n=\|u_n - v_n\|_{\boldsymbol x_n, d_n}> 0.
\]
According to  Lemma \ref{lem2-3} and the assumptions on $\bar h_n$, we have   $\max_{1\leq j\leq k_n}|\bar \lambda_j^n|\to 0$. Therefore,
\begin{equation}\label{lambda}
  \max_{1\leq j\leq k_n}|\lambda_j^n-\bar\lambda_j^n|\leq C\max_{1\leq j\leq k_n}\|u_n-v_n\|_{L^2(B(x_j,\rho))}+o(1) s_n.
\end{equation}
Setting
$z_{n}=\frac{u_n-v_n}{  s_n},$ we have 
$\|z_n\|_{\boldsymbol x_n, d_n}=1$.
By
$|(u_n)_j^{\delta}-(v_n)_j^{\delta}|\leq  |u_n-v_n|$, for each $j$, we have
\begin{equation}\label{22}
\max_{1\leq j \leq k_n}\int \frac{|(u_n)_j^{\delta}-(v_n)_j^{\delta}|}{s_n}   |z_n| \leq \max_{1\leq j \leq k_n}\int_{B(x_j^n,\rho)}  z_n^2 \leq 1.
\end{equation}
Let $i_n \in \{1, \cdots, k_n\}$ be arbitrarily given.
For notational simplicity and
up to a translation, we   denote $i=i_n$. 
Denote
$x_{u_n}^j= x_j^n+\beta_j(u_n)$, $x_{v_n}^j= x_j^n+\beta_j(v_n)$ for $j\in \mathscr I_i$.
By \eqref{eq55'} and \eqref{eq56'}, in $A(\boldsymbol x_n, d_n)\cap B(x_i^n,2R_0)$, we have
\begin{equation}\label{eq57}
\begin{aligned}
-\Delta (u_n-v_n)- (u_n-v_n)&+u_n^{q-1}-v_n^{q-1}=h_n-\bar h_n+\sum_{j\in \mathscr I_i}(\lambda_j^n-\bar\lambda_j^n)\cdot(x-x_{u_n}^j)(u_n)_j^\delta\\
& +\sum_{j\in \mathscr I_i}\bar\lambda_j^n\cdot(x_{v_n}^j-x_{u_n}^j)(v_n)_j^\delta
+ \sum_{j\in \mathscr I_i}\bar\lambda_j^n\cdot(x-x_{u_n}^j)((u_n)_j^\delta-(v_n)_j^\delta) .
\end{aligned}
\end{equation}
Dividing both sides of \eqref{eq57} by $ s_n$, 
 we deduce
\begin{equation}\label{eq58}
\begin{aligned}
-\Delta  z_n-  z_n+  b_n  z_n
=& \sum_{j\in \mathscr I_i}\frac{\lambda_j^n- \bar\lambda_j^n}{s_n}\cdot(x -x_{u_n}^j)(u_n)_j^\delta +\sum_{j\in \mathscr I_i}\bar\lambda_n\cdot\frac{x_{v_n}^j-x_{u_n}^j}{s_n}(v_n)_j^\delta 
\\
&+ \sum_{j\in \mathscr I_i}\bar\lambda_n\cdot(x-x_{u_n}^j)\frac{(u_n)_i^{\delta}-(v_n)_i^{\delta}}{ s_n} +\frac{h_n-\bar h_n}{s_n}\ \ \ \text{in }A(\boldsymbol x_n, d_n)\cap B(x_i^n,2R_0),
\end{aligned}
\end{equation}
where
$$ b_n=\frac{ u_n^{q-1}- v_n^{q-1}}{u_n -v_n}.$$

\noindent
 {\bf Step 1.}  
We show that 
\[ 
\liminf_{n\to+\infty}\max_{1\leq j\leq k_n} \|z_n\|_{L^2(B(x_j^n, \rho))} >0.
\]
Assume to the contrary that $\max_{1\leq j\leq k_n} \|z_n\|_{L^2(B(x_j^n, \rho))} \to 0$
along a subsequence.
Let $\psi$ be a smooth cut-off function satisfying $\psi=1$ in $B(x_i^n, R_0) $,
$\psi=0$ outside $ B(x_i^n, 2R_0)$, $|\nabla \psi|\leq 2/R_0$.
Testing $\psi^2 z_n\in H_0^1(A(\boldsymbol x_n, d_n)\cap B(x_i^n, 2R_0))$ against the equation \eqref{eq58}, 
we have by \eqref{lambda} and \eqref{22} that
\begin{equation}\label{bn}
  \begin{aligned}
      \int|\nabla(\psi z_n)|^2 +\psi^2 b_n z_n^2\leq& \int (|\nabla\psi|^2+\psi^2)z_n^2
  +\sum_{j\in \mathscr I_i}\|z_n\|^2_{L^2(B(x_j^n,R^*+d_n))}
  +o(\|  z_n \|_{H^1(B(x_i^n, 2R_0))})\\
  \leq & C_N\max_{1\leq j\leq k_n}\|z_n\|^2_{L^2(B(x_j^n, R^*+d_n))} +o(1).
  \end{aligned}
\end{equation}
where $C_N$ is a constant depending only on $N$. 
Since  $b_n\geq 0$, by the arbitrary choice of $i=i_n$, the Poincar\'e inequality and \eqref{bn}, we have 
\begin{equation}\label{zn}
1=\|z_n\|^2_{\boldsymbol x_n, d_n} \leq  C\max_{1\leq j\leq k_n}\|z_n\|^2_{L^2(B(x_j^n, R^*+d_n))} +o(1).
\end{equation}
Now 
set $i=i_n$ as the index such that 
\[
 \|z_n\|_{L^2(B(x_i^n, R^*+d_n))}=\max_{1\leq j\leq k_n}\|z_n\|_{L^2(B(x_j^n, R^*+d_n))}.
\]  
 Since 
 $|u|, |v|\leq \delta$ in $ B(x_i^n, R^*+d_n)\setminus B(x_i^n, \rho)$. Therefore, we know 
 \[
 \int_{\RN}b_n\psi^2z_n^2  \geq C\delta^{q-2}\|z_n\|_{L^2(B(x_i^n,R^*+d_n)\setminus B(x_i^n,\rho))}^2.
 \]
 Taking this into \eqref{bn}, we get
\[   
 \int|\nabla(\psi z_n)|^2 + (C\delta^{q-2}-C_N)\|z_n\|_{L^2(B(x_i^n, R^*+d_n))}^2
 \leq C\delta^{q-2}  \|z_n\|_{L^2(B(x_i^n, \rho))}^2+o(1).
\]
 Fixing $\delta$ smaller if necessary, such that $C\delta^{q-2}>C_N+1$, we have 
 by the Poincar\'e inequality that
 \[
 \max_{1\leq j\leq k_n}\|z_n\|_{L^2(B(x_j^n, R^*+d_n))}^2=\|  z_n\|_{L^2(B(x_i^n,R^*+d_n))}^2\leq \|\psi z_n\|_{L^2(\R^N)}^2\leq C_N \|z_n\|^2_{L^2(B(x_i^n, \rho))}+o(1)=o(1).
 \]
This contradicts \eqref{zn}.

\noindent
{\bf Step 2.}   Let $i=i_n$ be the index such that 
\[
 \|z_n\|_{L^2(B(x_i^n, R^*+d_n))}=\max_{1\leq j\leq k_n}\|z_n\|_{L^2(B(x_j^n, R^*+d_n))}.
\]  
Up to translations, we assume $x_i^n=0$.
By Step 1, passing to a subsequence, we may assume $z_n\rightharpoonup z$ in $H^1(B(0, R^*))$ for some
$z\in H^1(B(0, R^*)) \setminus\{0\}$.
We show that 
\begin{equation}\label{step2}
z=\nabla w \cdot \tau \ \text{in } B(0, R^*) \text{ for some } \tau\in \R^N\setminus\{0\}.
\end{equation}
Since
$\sigma(\boldsymbol x_n)\to 0$, we have  $u_n\to w$ a.e. in $B(0, R^*)$, and $v_n\to w$ uniformly in $B(0, R^*)$. Hence,
\begin{equation}\label{eq333}
   b_n(x)=\frac{|u_n|^{q-2}u_n-|v_n|^{q-2}v_n}{u_n -v_n }\to 
 (q-1)w^{q-2}  \ \text{ a.e. in }   B(0,R^*).
\end{equation}
  By this,  $ b_n  \geq 0$, the Fatou's Lemma, and the first inequality in \eqref{bn}, we have 
 \begin{equation}\label{fatou}
 \int_{B(0,R^*)}(q-1)w^{q-2} z^2\leq \liminf_{n\to+\infty}\int \psi^2 b_n   z_n^2\leq C.
 \end{equation}
 
Since  $v_n\to w$ uniformly in $B(0, R^*)$,  for any compact set $ \mathcal K \subset B(0,R_*) $, there exists $ \delta_{\mathcal K} > 0 $ such that $ v_n \geq \delta_{\mathcal K} $ on $ \mathcal K $ for all sufficiently large $ n $. Then, as in \eqref{a.e.}, we have the estimate
\[
\big| |u_n|^{q-2}u_n - |v_n|^{q-2}v_n \big| \leq |v_n|^{q-2}v_n \left| \left| \frac{u_n}{v_n} \right|^{q-1} - 1 \right| \leq C |v_n|^{q-2}v_n \left| \frac{u_n}{v_n} - 1 \right| \leq C \delta_{\mathcal K}^{q-2} |u_n - v_n|.
\]
Therefore, testing \eqref{eq58} with $h\in C_0^\infty(B(0,R^*))$, 
 and taking the limit as $n\to\infty$, we obtain from \eqref{eq333} that
 \begin{equation}\label{eq29}
 -\Delta z-z+(q-1)w^{q-2}z=(\lambda'\cdot x) w^\delta\ \ \ 
 \text{in } B(0,R^*),\  \ \text{where }\frac{\lambda_i^n-\bar\lambda_i^n}{ s_n}\to\lambda'.
 \end{equation}
 By the density of $C_0^\infty(B(0,R^*))$ in the space $H_0^1(B(0,R^*)) \cap L^2(B(0,R^*), w^{q-2} \rd x)$, the identity \eqref{eq29} extends to all test functions $h$ in this space, yielding:
\begin{equation}\label{eq 25}
\int_{B(0,R^*)}\nabla z\nabla h-zh+(q-1)zw^{q-2}h
=\int_{B(0,R^*)}(\lambda'\cdot x)w^\delta h,\ \  \forall h\in H_0^1(B(0,R^*)) \cap L^2(B(0,R^*), w^{q-2} \rd x).
\end{equation}
In view of Lemma \ref{lemmaR*} (i) and (iii), 
\[
 \nabla w \cdot \lambda' = 0 \quad \text{and} \quad \nabla(\nabla w \cdot \lambda') = 0 \quad \text{ on } \partial B(0, R^*).
 \]
Therefore,
$| \nabla w \cdot \lambda'|=o((R^*-|x|))$ as $|x|\to R^*$, and
due to the boundary asymptotics of $w$, we have
\[
\nabla w \cdot \lambda' \in H_0^1(B(0, R^*)) \cap L^2(B(0, R^*), w^{q-2} \rd x).
\]
Since $w$ satisfies $-\Delta w - w + w^{q-1} = 0$ in $\R^N$, differentiating this equation along $\lambda'$ yields $-\Delta(\nabla w \cdot \lambda') - (\nabla w \cdot \lambda') + (q-1)w^{q-2}(\nabla w \cdot \lambda') = 0$ in $\R^N$. 
So we have
\begin{equation*}
\int_{B(0,R^*)}\nabla z\nabla (\nabla w\cdot\lambda')-z(\nabla w\cdot\lambda')+(q-1)zw^{q-2}(\nabla w\cdot\lambda')
=0.
\end{equation*}
Then,
substituting $h = \nabla w \cdot \lambda'$ into \eqref{eq 25}, causes the left-hand side to vanish, which forces
\[
\int_{B(0,R^*)} (\lambda' \cdot x)\, w^\delta (\nabla w \cdot \lambda')  = 0.
\]
According to this and the divergence theorem, we have
 $$0 =\frac12\int _{B(0,R^*)}\text{div}( (w^\delta)^2\lambda')(\lambda'\cdot x)
  =-\frac12\int\nabla (\lambda'\cdot x)\cdot(\lambda'(w^\delta)^2)
  =-\frac12|\lambda'|^2\int_{B(0,R^*)}(w^\delta)^2,$$
 which implies that
 $ \lambda'=0.$ Then by \eqref{fatou} and Lemma \ref{lemmaR*} (iv), we have proved \eqref{step2}.

 \noindent
{\bf Contradiction.} By Lemma \ref{lem2-4}, $|\beta_i(v_n)|\leq \|v_n-w\|_{L^2(B(0,\rho))}=o(1)$, and $\|u_n-v_n\|_{L^2(B(0,\rho))}=o(1)$, we have
 \[ 
\begin{aligned}
\int_{B(0,\rho)}
2xv_n^\delta(u_n- v_n)  
=&\beta_i(u_n)
\|(u_n)_i^\delta\|_{L^2}^2  -\beta_i(v_n) \|(v_n)_i^\delta\|_{L^2}^2  +
O(\|u_n-v_n\|^2_{L^2(B(0,\rho))})\\
=&(\beta_i(u_n)-\beta_i(v_n))\|(u_n)_i^\delta\|_{L^2}^2 + O(|\beta_i(v_n)| \|u_n-v_n\|_{L^2(B(0,\rho))}) +o(1)s_n=o(1)s_n.
\end{aligned} \]
Dividing both sides of this equation by $s_n$ and taking the limit as $n\to\infty$, we obtain
\begin{equation*}
0= \int_{B(0,\rho)}2 xw^\delta  z  
= \int_{B(0,\rho)}2xw^\delta (\nabla w\cdot \tau) 
= \int_{B(0,\rho)}\Big(\nabla\big((w^\delta)^2\big)\cdot\tau\Big)x
=- \tau\int_{B(0,\rho)}(w^\delta)^2,
\end{equation*} 
which contradicts \eqref{step2}.
\end{proof}

\begin{proof}[Proof of Proposition \ref{prop3.1}]
  Assume to the contrary that there exist sequences 
    $d_n\in[0,\sigma_0]$, $\boldsymbol{x}_n=\{x_j^n\}_{j=1}^{k_n}$ 
    with $\sigma(\boldsymbol x_n)\to 0$, and functions $u_n \neq v_n :=\sum_{j=1}^k w(\cdot - x_j^n)$
 emerging around $\boldsymbol{x}_n$
 such that $\beta_j(u_n)=0$, $j=1,\cdots,k_n$ and 
 \(
 \| u_n - v_n  \|_{\boldsymbol{x}_n, d_n} 
\to 0, 
\)
  but 
\[  \|(I^\infty)'(u_n)-(I^\infty)'(v_n)\|_{*,u_n}=o( \|u_n-v_n\|_{\boldsymbol x_n, d_n}).\]
By definition of $\|\cdot\|_{*,u_n}$, 
there exist $\lambda_j^n \in \mathbb{R}^N$, $j=1,\cdots,k_n$, 
such that
\begin{equation}\label{eq3.17}
\|(I^\infty)'(u_n)-(I^\infty)'(v_n)\|_{*,u_n} =\|(I^\infty)'(u_n)-(I^\infty)'(v_n)-\sum_{j=1}^{k_n}\lambda_j^n(x-x_j^n) (u_n)_j^\delta\|_{*,\boldsymbol x_n,d_n}.
\end{equation}
Denote  $h_n=(I^\infty)'(u_n) -\sum_{j=1}^{k_n}\lambda_j^n(x-x_j^n) (u_n)_j^\delta$ and $\bar h_n=(I^\infty)'(v_n)$.
Then we have the following equations in $ H^{-1}(A(\boldsymbol x_n,  d_n))$:
\begin{gather*}
  -\Delta u_n - u_n + |u_n|^{q-2}u_n 
  = \sum_{j=1}^{k_n}\lambda_j^n \cdot (x - x_j^n  ) (u_n)_j^\delta + h_n,
  \\
  -\Delta v_n - v_n +  |v_n|^{q-2}v_n
  = \bar h_n.
\end{gather*}
  By \eqref{Iinfw}, we know 
\[
\|\bar h_n\|_{*,\boldsymbol x_n,d_n}=\|(I^\infty)'(v_n)\|_{*,\boldsymbol x_n,d_n}
\to0.
\]
Combining these with $\beta_j(u_n)=\beta_j(v_n)=0$, $j=1,\cdots,k_n$,
 we get from Proposition \ref{pro.lim} and \eqref{eq3.17} that
 \[
\|u_n - v_n\|_{\boldsymbol x_n, d_n} 
\leq C \|h_n - \bar h_n\|_{*,\boldsymbol x_n,d_n}=C\|(I^\infty)'(u_n)-(I^\infty)'(v_n)\|_{*,u_n}=o(\|u_n - v_n\|_{\boldsymbol x_n, d_n} ),
\]
which is impossible since $u_n\neq v_n$.
 \end{proof}

\section{Solutions with any finite number of bumps}
In Section 4, we construct candidate functions with a finite number of bumps via a constrained minimization procedure. We define an admissible set \(S_{\boldsymbol{x},d}\) of nonnegative functions in \(H^1_0(A(\boldsymbol{x},d))\) that emerge around a given finite point set \(\boldsymbol{x}\in\mathcal K_k\) and satisfy local barycenter constraints. By minimizing the energy \(I(u)\) on this set, we obtain minimizers in Proposition \ref{pro2.5}. This localized construction sets the stage for the final step in Section 5, where we will prove that for \(K\) close to $1$, these minimizers indeed become solutions of \eqref{1.1}.
\subsection{Preliminary}

For $u\in X_q$, we define 
 the energy functional 
\begin{equation*}
I (u)
=\frac12\int_{\RN}|\nabla u|^2-K(x) u^2 dx
+\int_{\RN}  \frac1q|u|^q.
\end{equation*}
By direct computation,
for any $u\in X_q$ that  is emerging around $\boldsymbol x=\{x_i\}_{i=1}^k$, there holds 
\[I(u)=I(u_\delta)+J_\delta(u^\delta),\] where
\begin{equation}\label{9}
 J_\delta(u)= \frac12\int_{\R^N}|\nabla u|^2-K(x)u^2dx
 -\int_{\R^N}K(x)\delta udx+\frac1q\int_{\text{supp }u}(\delta+|u|)^qdx-\frac1q|\text{supp\ }u|\delta^q.
\end{equation}
According to the choice of $\delta$, we know that $ I (u_\delta)>0$ for each $u\geq (\not\equiv) 0$.
Moreover, by the Gagliardo--Nirenberg inequality
\begin{equation}\label{GN}\|u\|_2\leq C_{q,N}\|\nabla u\|_2^\theta\|u\|_q^{1-\theta},\quad u\in X_q, \quad \text{where\ } \frac12=\theta\cdot(\frac12-\frac1N)+(1-\theta)\cdot\frac1q, \end{equation}
we have the following lemma.
\begin{lemma}\label{belowbound}
 For a suitable positive constant $r_1$, it holds that
\begin{align*}
I (u) \geq  \|  u\|_2^2, \mbox{ if }\ u\in X_q\ \text{satisfies } \|u\|_2  \leq r_1.
\end{align*}
\end{lemma}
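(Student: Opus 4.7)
The strategy is to exploit the subcritical exponent $q<2$ via the Gagliardo--Nirenberg inequality \eqref{GN}, coupled with a dichotomy on whether the Dirichlet energy or the $L^q$-term dominates. Since $K(x)\leq a_1$ by (K2), one has the pointwise lower bound
\[
I(u)\;\geq\; \tfrac12\|\nabla u\|_2^2-\tfrac{a_1}{2}\|u\|_2^2+\tfrac1q\|u\|_q^q,
\]
so the claim reduces to showing $\tfrac12\|\nabla u\|_2^2+\tfrac1q\|u\|_q^q\geq c_0\|u\|_2^2$ with $c_0:=1+\tfrac{a_1}{2}$ whenever $\|u\|_2\leq r_1$. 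The case $u\equiv 0$ is trivial, so I assume $\|u\|_2>0$ below.

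I would split into two regimes. If $\|\nabla u\|_2^2\geq 4c_0\|u\|_2^2$, the gradient term alone yields $\tfrac12\|\nabla u\|_2^2\geq 2c_0\|u\|_2^2$ and we are done. Otherwise, inserting $\|\nabla u\|_2^2<4c_0\|u\|_2^2$ into the squared form of \eqref{GN} and dividing through by $\|u\|_2^{2\theta}$ gives
\[
\|u\|_2^{2(1-\theta)}\;\leq\; C_{q,N}^{2}(4c_0)^{\theta}\,\|u\|_q^{2(1-\theta)},
\]
which is equivalent to $\|u\|_q\geq C'\|u\|_2$ for a constant $C'=C'(q,N,a_1)>0$. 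Hence $\tfrac1q\|u\|_q^q\geq \tfrac{(C')^q}{q}\|u\|_2^{q-2}\cdot\|u\|_2^2$, and since $q<2$ the factor $\|u\|_2^{q-2}$ is decreasing in $\|u\|_2$, so it is bounded below by $r_1^{q-2}$ on the set $\{\|u\|_2\leq r_1\}$.

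Choosing $r_1$ small enough that $\tfrac{(C')^q}{q}r_1^{q-2}\geq c_0$---explicitly $r_1\leq \bigl((C')^q/(qc_0)\bigr)^{1/(2-q)}$---then closes the estimate. I do not anticipate a serious obstacle: the argument is elementary and the subcriticality $q<2$ supplies exactly the extra room needed for small $\|u\|_2$. The only care required is to track that every constant in the derivation depends only on $q$, $N$ and $a_1$, so that $r_1$ is indeed a universal threshold rather than one depending on the individual function $u$.
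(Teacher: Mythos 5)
Your proof is correct. It rests on the same two ingredients as the paper's argument---the Gagliardo--Nirenberg inequality \eqref{GN} and the subcriticality $q<2$---but organizes the absorption step differently. The paper applies Young's inequality directly to $\|u\|_2\leq C_{q,N}\|\nabla u\|_2^\theta\|u\|_q^{1-\theta}$, obtaining $C\|u\|_2^{2N/(N+2\theta)}\leq \tfrac12\|\nabla u\|_2^2+\tfrac1q\|u\|_q^q$ in a single stroke, and then exploits that the exponent $\tfrac{2N}{N+2\theta}<2$ makes the factor $C\|u\|_2^{-4\theta/(N+2\theta)}$ blow up as $\|u\|_2\to 0$. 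You instead split into two regimes: when the Dirichlet term already dominates $\|u\|_2^2$ you are done immediately, and otherwise \eqref{GN} forces $\|u\|_q\geq C'\|u\|_2$, after which $\|u\|_q^q\geq (C')^q\|u\|_2^{q-2}\|u\|_2^2$ and $q<2$ supplies the same divergence for small $\|u\|_2$. The two approaches are essentially interchangeable in difficulty and in the constants produced; the paper's is a one-line absorption, yours makes the mechanism of the gain a bit more transparent. One small point worth recording explicitly in your regime-2 step: the division by $\|u\|_2^{2\theta}$ and the subsequent extraction of the $2(1-\theta)$-th root require $\theta\in(0,1)$, which does hold here (for $N=2$ one gets $\theta=1-q/2$, and for $N\geq 3$ a short computation from the defining relation for $\theta$ gives $\theta\in(0,1)$ as well), but it is cleaner to note it since you quietly rely on it.
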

\begin{proof}
By \eqref{GN} and the Young's inequality, we know that for a constant $C>0$,
\[
 C \|u\|_2^\frac{2N}{N+2\theta} \leq \frac{1}{2}\|\nabla u\|_2^2 +\frac1q\|u\|_q^q.
\]
Therefore,
\[
  I (u)\geq  C\|u\|_2^\frac{2N}{N+2\theta}-a_1\|u\|_2^2=\|u\|_2^2(C\|u\|_2^{-\frac{4\theta}{N+2\theta}}-a_1).
\]
Then the conclusion holds if we fix $r_1>0$ sufficiently small.
\end{proof}
\begin{lemma}\label{lem2.1}
For any $u\in X_q$ emerging around $x\in\R^N$ such that 
$\int_{\R^N}|\nabla u^\delta|^2-K(x)(u^\delta)^2dx<0$, 
the function 
$$\mathcal I_u(t):=I(u_\delta+tu^\delta)$$ has a unique maximum point 
$t_u\in (0,+\infty)$.
\end{lemma}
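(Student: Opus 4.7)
The plan is to reduce this to a one-dimensional calculus problem using the decomposition stated just before \eqref{9}. For any $t>0$, the function $u_\delta + t u^\delta$ is still emerging around $x$ with emerging part $t u^\delta$ and submerged part $u_\delta$ (indeed, on $\{u>\delta\}$ we have $u_\delta + t u^\delta = \delta + t u^\delta > \delta$, while on $\{u\le\delta\}$ the sum equals $u\le\delta$). Hence, applying \eqref{9} to $v = t u^\delta$, one can write
\[
\mathcal I_u(t) = I(u_\delta) + J_\delta(t u^\delta) = I(u_\delta) + \tfrac{t^2}{2}A - tB + f(t),
\]
where $A := \int |\nabla u^\delta|^2 - K(u^\delta)^2 < 0$ by hypothesis, $B := \int K\delta\, u^\delta \ge 0$, and $f(t) := \tfrac{1}{q}\int_{\supp u^\delta}(\delta + t u^\delta)^q - \tfrac{1}{q}|\supp u^\delta|\delta^q$ with $f(0)=0$.

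I would next analyze $\mathcal I_u'(t) = tA - B + f'(t)$, where $f'(t) = \int (\delta + t u^\delta)^{q-1} u^\delta$. The first step is to show $\mathcal I_u'(0^+) > 0$. Evaluating gives $\mathcal I_u'(0) = \int u^\delta(\delta^{q-1} - K\delta) = \delta^{q-1}\int u^\delta(1 - K\delta^{2-q})$. Using (K2) and the choice of $\delta$ in \eqref{eq9}, which forces $\delta^{2-q} < t_*/a_1$ with $t_* < 2-q < 1$, one obtains $K\delta^{2-q}\le a_1\delta^{2-q} < 1$ a.e.\ on $\supp u^\delta$, so $\mathcal I_u'(0) > 0$ since $u^\delta\not\equiv 0$. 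The second step is to note $\mathcal I_u'(t)\to -\infty$ as $t\to\infty$: the linear term $tA$ goes to $-\infty$ while $f'(t)\le C\,t^{q-1}$ with $q-1<1$, so the linear term dominates.

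The final step is uniqueness, which I would obtain by showing $\mathcal I_u'$ is \emph{strictly concave}. Differentiating twice more,
\[
\mathcal I_u'''(t) = (q-1)(q-2)\!\int_{\supp u^\delta}(\delta + t u^\delta)^{q-3}(u^\delta)^3 < 0,
\]
since $q\in(1,2)$ gives $(q-1)(q-2)<0$. A continuous, strictly concave function on $[0,\infty)$ with positive value at $0$ and limit $-\infty$ at infinity has exactly one zero: if $0 < t_1 < t_2$ were two zeros, strict concavity would give $\mathcal I_u'(t_1) > (1 - t_1/t_2)\mathcal I_u'(0) > 0$, contradicting $\mathcal I_u'(t_1)=0$. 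Thus there is a unique $t_u\in(0,\infty)$ with $\mathcal I_u'(t_u)=0$; moreover $\mathcal I_u' > 0$ on $(0,t_u)$ and $\mathcal I_u' < 0$ on $(t_u,\infty)$, so $t_u$ is the unique maximum point of $\mathcal I_u$ on $(0,\infty)$.

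The only delicate point is the positivity of $\mathcal I_u'(0)$, which relies crucially on the smallness of $\delta$ in \eqref{eq9} combined with the pointwise bound $K\le a_1$; once this is secured, the remainder is elementary. The strict concavity of $\mathcal I_u'$ is a direct manifestation of the sublinear exponent $q\in(1,2)$.
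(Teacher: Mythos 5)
Your proof is correct and follows essentially the same approach as the paper: decompose $\mathcal I_u(t)=I(u_\delta)+J_\delta(tu^\delta)$, compute $\mathcal I_u'$, check $\mathcal I_u'(0)>0$ and $\mathcal I_u'(t)\to-\infty$, and use $\mathcal I_u'''<0$ (strict concavity of $\mathcal I_u'$) to get a unique zero. You also helpfully spell out the step the paper leaves implicit, namely that $\mathcal I_u'(0)>0$ follows from the smallness condition $a_1\delta^{2-q}<t_*<1$ in \eqref{eq9}.
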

\begin{proof}
Since
$$
\mathcal I_u(t)=I(u_\delta)+J_\delta(tu^\delta),
$$
we deduce from \eqref{9} that
\begin{equation}\label{eq13}
\mathcal I_u'(t)=\frac{d}{dt}J_\delta(tu^\delta)
=t\int_{\R^N}|\nabla u^\delta|^2-K(x)(u^\delta)^2dx
+\int_{\R^N}(\delta+tu^\delta)^{q-1}u^\delta dx
-\delta\int_{\R^N}K(x) u^\delta dx.
\end{equation}
Therefore, $\mathcal I_u'(0)>0$ and $\mathcal I_u'(t)\to-\infty$
as $t\to+\infty$. Moreover, 
$$\mathcal I_u'''(t)=(q-1)(q-2)\int_{\R^N}(\delta+tu^\delta)^{q-3}(u^\delta)^3<0\ \ \text{for\ all\ }t\in(0,+\infty).$$
So $\mathcal I_u'(t)$ is concave in $(0,+\infty)$
and the conclusion follows.
\end{proof}
\begin{corollary}\label{cor2.2}
 Assume that $u\in X_q$ emerges around the point  $\{x_j\}_{j=1}^k$. Then for any $j\in\{1,\cdots,k\}$, the function $I(u_\delta+\sum_{i\neq j} u_i^\delta+tu_j^\delta)$ has a unique maximum point 
 $t_u^j\in(0,+\infty)$ if $\int_{\R^N}|\nabla u_j^\delta|^2-K(x)(u_j^\delta)^2dx<0$. 
\end{corollary}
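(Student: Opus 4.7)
The plan is to reduce Corollary~\ref{cor2.2} to Lemma~\ref{lem2.1} by exploiting the disjoint-support structure of the emerging parts. Set
\[
F(t) := I\Big(u_\delta + \sum_{i\neq j} u_i^\delta + t u_j^\delta\Big).
\]
Since $u$ emerges around $\{x_j\}_{j=1}^k$, the supports $\supp u_i^\delta \subset B(x_i,\rho)$ are pairwise disjoint. My first step is to identify, for each $t>0$, the submerged and emerging parts of the argument of $I$ in $F(t)$ as $u_\delta$ and $\sum_{i\neq j} u_i^\delta + t u_j^\delta$, respectively. Indeed, on $\supp u_i^\delta$ one has $u>\delta$ and hence $u_\delta = \delta$ a.e., so adding the positive quantities $u_i^\delta$ and $tu_j^\delta$ to $u_\delta$ produces a function strictly exceeding $\delta$ exactly on $\bigcup_{i\neq j}\supp u_i^\delta \cup \{t u_j^\delta>0\}$, and equal to $u_\delta$ off this set.

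Applying the identity $I(v) = I(v_\delta) + J_\delta(v^\delta)$ from the discussion preceding Lemma~\ref{lem2.1}, I obtain
\[
F(t) = I(u_\delta) + J_\delta\Big(\sum_{i\neq j} u_i^\delta + t u_j^\delta\Big).
\]
Next I would verify, by direct inspection of formula~\eqref{9}, that $J_\delta$ is additive on sums of nonnegative functions with pairwise disjoint supports: every term splits (the gradient and weighted $L^2$ pieces because the functions have disjoint supports, the linear $K\delta$ term by linearity, and the $\supp$-integrals because $(\delta+|v|)^q - \delta^q$ vanishes off $\supp v$). This yields $F(t) = C + J_\delta(tu_j^\delta)$ with $C$ independent of $t$, reducing the problem to showing that $g(t):=J_\delta(tu_j^\delta)$ has a unique maximizer in $(0,+\infty)$.

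At this point the argument is a direct transplant of the proof of Lemma~\ref{lem2.1}, applied to $u_j^\delta$ in place of $u^\delta$: the derivative $g'(t)$ takes the form~\eqref{eq13}; the hypothesis $\int_{\R^N}|\nabla u_j^\delta|^2 - K(u_j^\delta)^2 < 0$ forces $g'(t)\to-\infty$ as $t\to+\infty$; the identity $g'''(t) = (q-1)(q-2)\int(\delta + t u_j^\delta)^{q-3} (u_j^\delta)^3 < 0$ makes $g'$ strictly concave on $(0,+\infty)$; and $g'(0) = \delta\int(\delta^{q-2} - K)u_j^\delta > 0$ because the bound $\delta^{2-q} < t_*/a_1$ from~\eqref{eq9} together with (K2) gives $\delta^{q-2} > a_1 \geq K$ pointwise. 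Hence $g'$ admits a unique zero $t_u^j \in (0,+\infty)$, which is the unique maximum of $F$. The only nontrivial algebraic step is the additivity of $J_\delta$ under disjoint supports; once this is checked, Lemma~\ref{lem2.1}'s analysis transfers verbatim.
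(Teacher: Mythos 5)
Your proof is correct and matches the argument the paper intends (no proof is written out for this corollary, but the identity $I(u_\delta+\sum_i t_i u_i^\delta)=I(u_\delta)+\sum_i J_\delta(t_i u_i^\delta)$ is exactly what the paper uses implicitly here and explicitly in Lemma~\ref{lem2.4}). The reduction via additivity of $J_\delta$ on disjointly supported summands and the subsequent application of Lemma~\ref{lem2.1}'s analysis to $t\mapsto J_\delta(tu_j^\delta)$ is the expected route.
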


For all $\boldsymbol x=\{x_i\}_{i=1}^k\in\mathcal K_k$, we define
$$
S_{\boldsymbol x}=S_{x_1,\dots,x_k}=\Set{u\in X_q\ |\ u\geq 0\ \text{emerging\ around\ }\boldsymbol x,\ I'(u)u_i^\delta=0,\ \beta_i(u)=0\ \text{for each }  i=1,\cdots,k}.$$
According to Lemma \ref{lem2.1}, for 
$u\in S_{x_1,\dots,x_k}$, we have
\begin{gather}
I (u)   = \max_{t_i\geq 0, i=1,\dots,k} I(u_\delta+\sum_{i=1}^kt_iu_i^\delta)=\max_{t\geq 0} I(u_\delta+tu^\delta) > I(u_\delta)>0, \notag\\
J_\delta(u^\delta)= \max_{t_i\geq 0, i=1,\dots,k} J(\sum_{i=1}^kt_iu_i^\delta) = \max_{t\geq 0} J_\delta(tu^\delta) > J_\delta(0)=0.\label{eq 33}
\end{gather}
The following lemma is an immediate consequence of a direct calculation.
\begin{lemma}\label{sss}
There exists a positive constant $\alpha_1$ such that
\[
\sup_{y \in \mathbb{R}^N} \int_{\mathbb{R}^N} |\nabla w_y^\delta|^2 - K(x)(w_y^\delta)^2 \, dx < 0,
\]
provided that $\|K - 1\|_{p,\mathrm{loc}} \leq \alpha_1$, where $w_y = w(\cdot - y)$ for $y \in \mathbb{R}^N$.  
Furthermore, the set $S_{\boldsymbol x}\cap H_0^1(\cup_{j=1}^k B(x_j, R^*))$ is nonempty  for any $k \in \mathbb{N}$ and $\boldsymbol{x} =\{x_i\}_{i=1}^k \in \mathcal{K}_k$.  
\end{lemma}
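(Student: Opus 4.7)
The plan is to handle the two assertions in turn, with the sign inequality feeding the nonemptiness statement. For the sign inequality, I would first settle the autonomous case $K\equiv 1$ and then perturb. Testing the ground-state equation $-\Delta w-w+w^{q-1}=0$ against $w^\delta=(w-\delta)^+\in H_0^1(B(0,R^*))$ and using $\nabla w\cdot\nabla w^\delta=|\nabla w^\delta|^2$ a.e.\ yields
\[
\int_{\R^N}|\nabla w^\delta|^2-(w^\delta)^2\,dx=\delta\int_{\R^N}w^\delta\,dx-\int_{\R^N}w^{q-1}w^\delta\,dx.
\]
On $\{w>\delta\}$ one has $w^{q-1}>\delta^{q-1}$, and the bound $\delta<((q-1)/2)^{1/(2-q)}<1$ from \eqref{eq9} forces $\delta^{q-1}>\delta$, so the right-hand side is at most $-\eta$ for a fixed $\eta>0$. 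By translation invariance the same bound holds with $w$ replaced by $w_y$. The $K$-dependent correction is then $\int(1-K)(w_y^\delta)^2\,dx$; since $p>N/2$ gives $2p'<2^*$ and $\|w_y^\delta\|_{L^{2p'}}$ is independent of $y$, a H\"older estimate combined with a covering of $B(y,R^*)$ by unit balls yields
\[
\Big|\int_{\R^N}(1-K)(w_y^\delta)^2\,dx\Big|\leq C\|K-1\|_{p,loc}
\]
with $C$ independent of $y$. Choosing $\alpha_1$ small enough that this term is below $\eta/2$ completes the first part.

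For nonemptiness, my candidate is a rescaling of $v_0:=\sum_{j=1}^k w(\cdot-x_j)$, which already lies in $H_0^1(\cup_j B(x_j,R^*))$. The first step is to verify that $v_0$ emerges around $\boldsymbol x$ with $(v_0)_j^\delta=(w(\cdot-x_j)-\delta)^+$ supported in the disjoint balls $B(x_j,\rho)$. The only delicate point is the slight overlap of $\overline{B(x_j,R^*)}$ when $\sigma(\boldsymbol x)>0$; I would rule it out using the boundary asymptotic $w(R^*-r)\sim C_q r^{2/(2-q)}$ from Lemma \ref{lemmaR*}(iii), which for $\sigma_0$ small (consistently with the choices in \eqref{eqsigma}--\eqref{eq9}) gives $2w(R^*-\sigma_0)<w(R^*-4\sigma_0)=\delta$. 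Since $|x_i-x_j|\geq 2R^*-\sigma_0$, every point of an overlap region is at distance at least $R^*-\sigma_0$ from both centers, so $v_0\leq 2w(R^*-\sigma_0)<\delta$ there. Hence $\{v_0>\delta\}\subset\bigcup_j B(x_j,R^*-4\sigma_0)$ is a disjoint union on which $v_0$ coincides with a single translate of $w$, and the radial symmetry of each $(v_0)_j^\delta$ about $x_j$ forces $\beta_j(v_0)=0$.

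The remaining task is essentially a decoupled Nehari problem. For $u=(v_0)_\delta+\sum_{j=1}^k t_j (v_0)_j^\delta$ with $t_j>0$, the disjointness of the supports and the identity $(v_0)_\delta\equiv\delta$ on each of them imply $u_j^\delta=t_j(v_0)_j^\delta$, so the equations $I'(u)u_j^\delta=0$ decouple into $k$ one-variable problems; the first assertion of the lemma is precisely the hypothesis of Corollary \ref{cor2.2}, which produces a unique $t_j>0$ for each $j$. The resulting $u$ lies in $S_{\boldsymbol x}\cap H_0^1(\cup_j B(x_j,R^*))$, with $\beta_j(u)=0$ preserved under the rescaling. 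The step I expect to be the main obstacle is the support-separation argument: one must exclude the possibility that the sum of overlapping tails exceeds $\delta$, and this is exactly where the precise decay rate of $w$ at $\partial B(0,R^*)$ enters the construction.
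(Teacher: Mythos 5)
Your treatment of the first part is the same as the paper's: test the autonomous equation with $w_y^\delta$, observe $\int(\delta - w^{q-1})w^\delta < 0$ (you quantify this via $\delta^{q-1} > \delta$, which is fine), and control the $\int(1-K)(w_y^\delta)^2$ correction by H\"older and a covering, all uniformly in $y$.

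For the nonemptiness part you take $\sum_j w(\cdot-x_j)$ where the paper takes $\max_j w(\cdot-x_j)$, and the difference is not purely cosmetic. With $\max$, the emerging part is automatic: $(\max_i w_{x_i}-\delta)^+ = \max_i(w_{x_i}-\delta)^+ = \sum_i(w_{x_i})^\delta$ since the sets $\{w_{x_i}>\delta\}=B(x_i,R^*-4\sigma_0)$ are disjoint whenever $|x_i-x_j|\ge 2R^*-\sigma_0>2(R^*-4\sigma_0)$, with no constraint on how large the overlapping tails of the bumps are. Your $\sum$ requires the additional inequality $2w(R^*-\sigma_0)<\delta=w(R^*-4\sigma_0)$ to keep the overlap below the truncation level. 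You correctly observe that this follows from the boundary rate $w(R^*-r)\sim C_q r^{2/(2-q)}$ (since $4^{2/(2-q)}>2$), so it is achievable by shrinking $\sigma_0$; but it is a genuinely extra constraint, not a consequence of \eqref{eqsigma}--\eqref{eq9}, and should be stated as an additional requirement imposed when fixing $\sigma_0$ at the start of Section 2. The remainder of your argument --- radial symmetry giving $\beta_j=0$, and invoking Corollary \ref{cor2.2} with the sign inequality from the first part to produce the scaling factors $t_j$ --- matches the paper's reasoning.
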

\begin{proof}
Testing $-\Delta w_y-w_y+w_y^{q-1}=0$ with $w_y^\delta$, we deduce that
  \begin{equation}\label{wy} \int_{\R^N} |\nabla w_y^\delta|^2 - K(x)(w_y^\delta)^2 =\int_{\R^N} (1-K(x)) (w_y^\delta)^2 +\delta w_y^\delta -w_y^{q-1} w_y^\delta
  =\int_{\R^N} (\delta- w^{q-1}) w^\delta +O(\alpha) <0,
    \end{equation}
   where $\|K - 1\|_{p,\mathrm{loc}}=\alpha$ is small.

For  $\boldsymbol{x} =\{x_i\}_{i=1}^k \in \mathcal{K}_k$, the function $u = \max\{w_{x_i} \ |\ i = 1, \ldots, k\}$ emerges around $\boldsymbol{x}$ and satisfies $\int_{\mathbb{R}^N}|\nabla u_j^\delta|^2 - K(x)(u_j^\delta)^2 \, dx < 0$. By Corollary \ref{cor2.2}, we get the conclusion.
\end{proof}
 
Throughout the following proof, we always assume that $\|K - 1\|_{p,loc}\leq \alpha_1$.
\begin{lemma}\label{lem2.3} There is a constant $c>0$ independent of $K\leq a_1$,
  $k\in\N\setminus\{0\}$ and   $\{x_i\}_{i=1}^k\in\mathcal K_k$,  such that, for all $u\in S_{x_1,\dots,x_k}$, it holds
$$\|u_i^\delta\|_{q}\geq c,\quad \int_{\R^N}|\nabla u_i^\delta|^2-K(x)(u_i^\delta)^2dx<-c,\quad  i=1,\cdots,k.$$
\end{lemma}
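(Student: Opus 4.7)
My strategy is to extract a universal lower bound on the measure of the region where $u_i^\delta$ exceeds a fixed positive threshold; both assertions of the lemma will follow from it. The only input is the Nehari-type condition $I'(u)u_i^\delta=0$ guaranteed by $u\in S_{x_1,\dots,x_k}$. Because $u$ is emerging around $\{x_j\}_{j=1}^k$, the supports $\supp u_i^\delta\subset B(x_i,\rho)$ are pairwise disjoint and $u=\delta+u_i^\delta$ on $\supp u_i^\delta$. Substituting this into $I'(u)u_i^\delta=0$ and rearranging yields the key identity
\[
\int_{\R^N}|\nabla u_i^\delta|^2\,dx = \int_{\R^N} u_i^\delta(\delta+u_i^\delta)\bigl[K(x)-(\delta+u_i^\delta)^{q-2}\bigr]\,dx.
\]

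Next I locate a sign-change threshold of the right-hand-side integrand. Set $M:=a_1^{-1/(2-q)}-\delta$; the choice of $\delta$ in \eqref{eq9} gives $\delta^{q-2}>a_1/t_*>a_1$, so $\delta<a_1^{-1/(2-q)}$ and $M>0$. On $\{u_i^\delta\leq M\}$ one has $(\delta+u_i^\delta)^{q-2}\geq a_1\geq K(x)$, so the integrand is nonpositive there; only $E:=\{u_i^\delta>M\}$ can contribute positively, with the integrand bounded on $E$ by $C_0(u_i^\delta)^2$, where $C_0:=a_1(1+\delta/M)$. Combining this with H\"older's inequality and the Sobolev embedding $H^1_0(B(x_i,\rho))\hookrightarrow L^{2^*}$ (or $L^p$ for any fixed $p>2$ when $N=2$) yields
\[
\int_{\R^N}|\nabla u_i^\delta|^2 \leq C_0\,|E|^{2/N}\|u_i^\delta\|_{2^*}^2 \leq C_0 C_S\,|E|^{2/N}\int_{\R^N}|\nabla u_i^\delta|^2.
\]
Emergence forces $u_i^\delta\not\equiv 0$, so the left-hand side is strictly positive; this yields the scale-free bound $|E|\geq (C_0 C_S)^{-N/2}=:c_1>0$, with $c_1$ depending only on $q$, $N$, $\rho$, $a_1$, and $\delta$.

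The first assertion then follows immediately: $\|u_i^\delta\|_q^q\geq M^q|E|\geq M^q c_1$. For the second, I read the Nehari identity in the complementary form
\[
\int_{\R^N}|\nabla u_i^\delta|^2-K(x)(u_i^\delta)^2\,dx=\int_{\R^N} u_i^\delta\bigl[\delta K(x)-(\delta+u_i^\delta)^{q-1}\bigr]\,dx\leq -\delta(\delta^{q-2}-a_1)\|u_i^\delta\|_1,
\]
using $K\leq a_1$ and $(\delta+u_i^\delta)^{q-1}\geq\delta^{q-1}$. Since $\|u_i^\delta\|_1\geq M|E|\geq Mc_1$, this delivers the required uniform negative upper bound. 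All constants depend only on $q$, $N$, $\rho$, $a_1$, and $\delta$, and are therefore independent of $k$, $\boldsymbol x$, and the specific $u\in S_{x_1,\dots,x_k}$.

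The main obstacle is precisely the first step: direct applications of H\"older, Young, or Gagliardo--Nirenberg to the Nehari identity only yield $\|u_i^\delta\|_q^{q-1}\lesssim O(1)$, which, because $q-1\in(0,1)$, is compatible with $\|u_i^\delta\|_q\to 0$. Identifying the threshold $M$ at which $(\delta+s)^{q-2}$ crosses $a_1$ from above, and then using the Sobolev inequality to upgrade the resulting pointwise sign information into a quantitative lower bound on $|E|$, is what breaks this stalemate and produces the uniform constant $c$.
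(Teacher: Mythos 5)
Your proof is correct, and it takes a genuinely different route from the paper's. Both begin from the Nehari condition $I'(u)u_i^\delta=0$, but the mechanisms that extract a uniform lower bound differ. The paper's argument is the concavity trick: using $(\delta+u_i^\delta)^{q-1}\geq \bigl(t_*\delta+(1-t_*)u_i^\delta\bigr)^{q-1}\geq t_*\delta^{q-1}+(1-t_*)(u_i^\delta)^{q-1}$ together with $\delta<(t_*/a_1)^{1/(2-q)}$ to absorb the $\|u_i^\delta\|_1$ term and obtain $\int|\nabla u_i^\delta|^2-K(u_i^\delta)^2\leq -(1-t_*)\|u_i^\delta\|_q^q$; then the Gagliardo--Nirenberg interpolation \eqref{GN} combined with Young's inequality gives $\|u_i^\delta\|_q^q\leq C\|u_i^\delta\|_q^2$, which, since $q<2$, forces $\|u_i^\delta\|_q\geq c$. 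Your argument instead works pointwise: you isolate the threshold $M=a_1^{-1/(2-q)}-\delta$ where the integrand $u_i^\delta(\delta+u_i^\delta)\bigl[K-(\delta+u_i^\delta)^{q-2}\bigr]$ changes sign, and then Hölder plus Sobolev produce a universal lower bound on $|\{u_i^\delta>M\}|$, from which both assertions follow. Your approach is more elementary (no GN interpolation, no concavity of $s\mapsto s^{q-1}$) and yields a slightly stronger geometric by-product — a uniform lower bound on the measure of a super-level set — whereas the paper's approach is shorter in exposition and avoids the separate $N=2$ caveat about the Sobolev exponent.

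One remark: your closing claim that GN ``only yields $\|u_i^\delta\|_q^{q-1}\lesssim O(1)$'' is not accurate. GN does succeed here — the paper's combination of GN with Young absorbs the gradient term and produces $\|u_i^\delta\|_q^q\leq C\|u_i^\delta\|_q^2$, which is a genuine lower bound because $q<2$. The ingredient that naive Hölder/Young alone cannot supply is the elimination of the linear term $\delta^{q-1}\|u_i^\delta\|_1$; the paper handles it via the concavity trick while you handle it via the level-set split. Either device is essential, but GN itself is not the bottleneck.
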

\begin{proof}
By $I '(u)u_i^\delta=J_\delta'(u_i^\delta)u_i^\delta=0$, it holds
\begin{equation}\label{eq10}
\begin{aligned}
\int_{\R^N}|\nabla u_i^\delta|^2-K(x)(u_i^\delta)^2dx
&=-\int_{\R^N}(\delta+u_i^\delta)^{q-1}u_i^\delta dx
+\delta\int_{\R^N}K(x) u_i^\delta dx\\
&\leq -\int_{\R^N}( t_* \delta+(1-t_*) u_i^\delta)^{q-1}u_i^\delta dx
+\delta\int_{\R^N}a_1 u_i^\delta dx\\
&\leq \int_{\R^N}(\delta a_1-t_*\delta^{q-1}) u_i^\delta dx-(1-t_*)\int_{\R^N}(u_i^\delta)^{q}dx\\
&\leq -(1-t_*)\int_{\R^N}(u_i^\delta)^{q}dx,
\end{aligned}
\end{equation}
where we used the fact $t_*\in(0,2-q)$, 
the concavity  of the function $s\mapsto s^{q-1}$, $s\geq 0$, and $\delta<(t_*/a_1)^\frac{1}{2-q}$ (see \eqref{eq9}).
By this and \eqref{GN}, we get
$$
\begin{aligned}
\|u_i^\delta\|_{q}^q\leq C_{q,N}\|u_i^\delta\|_{q}^2,
\end{aligned}
$$
which implies that $\|u_i^\delta\|_{q}\geq c>0$. Taking this into \eqref{eq10}, we get the conclusion.
\end{proof}

\begin{lemma}\label{lem2.4}
  There is a constant $c_0>0$,  such that  $I (u_\delta+\sum_{j=1}^k (1+s_j)u_j^\delta)
\leq I (u)-c_0\sum_{j=1}^k\min\set{s_j^2,1}$ 
for any $u\in S_{\boldsymbol x}$ with $\boldsymbol x=\{x_i\}_{i=1}^k$. 
\end{lemma}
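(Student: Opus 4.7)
My plan combines two ingredients: an additive energy decomposition across bumps (from their pairwise disjoint supports) and a uniform quadratic concavity of each single-bump profile at its maximizer $t=1$. The first step I will carry out is the \emph{decoupling}: since $u$ emerges around $\boldsymbol{x}$, the supports of $u_1^\delta,\dots,u_k^\delta$ lie in the pairwise disjoint balls $B(x_j,\rho)$, and a direct inspection of \eqref{9} gives
\[
I\bigl(u_\delta+\sum_{j=1}^k(1+s_j)u_j^\delta\bigr)-I(u)=\sum_{j=1}^k\bigl[J_\delta((1+s_j)u_j^\delta)-J_\delta(u_j^\delta)\bigr],
\]
so it will suffice to establish the one-bump estimate $g_j(1+s)\leq g_j(1)-c_0\min\{s^2,1\}$ for $g_j(t):=J_\delta(tu_j^\delta)$, uniformly in $u\in S_{\boldsymbol{x}}$, $j$, and $s$.

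\emph{Step 2 (scalar profile and uniform $g_j''(1)\leq-c_1$).} Setting $v:=u_j^\delta$ and $A_j:=\int|\nabla v|^2-Kv^2$, the formula \eqref{eq13} makes $g_j'$ explicit, and the proof of Lemma \ref{lem2.1} gives $g_j'''<0$ on $(0,+\infty)$. A direct calculation identifies $I'(u)u_j^\delta=g_j'(1)$, so $u\in S_{\boldsymbol x}$ forces $g_j'(1)=0$ and Lemma \ref{lem2.1} places the unique maximum of $g_j$ at $t=1$. Lemma \ref{lem2.3} supplies the uniform bounds $\|v\|_q\geq c$ and $A_j\leq-c$. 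The central quantitative claim is $g_j''(1)\leq-c_1<0$ uniformly. Using the algebraic identity $(\delta+v)^{q-2}v^2=(\delta+v)^{q-1}v-\delta(\delta+v)^{q-2}v$ together with $g_j'(1)=0$, I would rewrite
\[
g_j''(1)=(2-q)A_j+(q-1)\delta\int v\bigl[K-(\delta+v)^{q-2}\bigr],
\]
with the second term dominated in modulus by $C\delta^{q-1}\|v\|_q$. A two-regime case analysis then closes it: when $\|v\|_q\leq M$, the remainder is absorbed by $(2-q)|A_j|\geq(2-q)c$ thanks to the smallness of $\delta$ from \eqref{eq9}; when $\|v\|_q\geq M$, the Nehari identity upgrades Lemma \ref{lem2.3} to $-A_j\geq\tfrac12\|v\|_q^q$, so $(2-q)A_j$ dominates the linear-in-$\|v\|_q$ remainder once $M$ is fixed large.

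\emph{Step 3 (closing the estimate).} Once $g_j''(1)\leq -c_1$ is known, since $g_j'''<0$ makes $g_j''$ decreasing, I will have $g_j''(t)\leq-c_1$ on $[1,+\infty)$ and, by continuity, $g_j''(t)\leq -c_1/2$ on some interval $[1-\tau,1+\tau]$ with $\tau>0$ uniform. A Taylor expansion then gives $g_j(1)-g_j(1+s)\geq c_1 s^2/4$ for $|s|\leq\tau$, and monotonicity of $g_j$ on $(0,1]$ and $[1,+\infty)$ propagates the endpoint estimate to $g_j(1)-g_j(1+s)\geq c_1\tau^2/4$ for $|s|\geq\tau$ with $1+s>0$. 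The remaining range $1+s\leq 0$ I would treat separately: the natural extension of $g_j$ to $t\leq 0$ (via $|\cdot|^q$) is $\leq 0$ because $g_j(0)=0$ and $g_j'(0)>0$, while $g_j(1)=J_\delta(v)$ is uniformly bounded below by some $c_2>0$, obtained from the concavity of $g_j'$ (yielding $g_j(1)\geq\tfrac12 g_j'(0)$) combined with Lemma \ref{lem2.3}. Setting $c_0:=\min\{c_1\tau^2/4,c_2\}$ finishes the proof.

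\emph{Main obstacle.} The hardest point will be Step 2, because the sublinear geometry prevents membership in $S_{\boldsymbol x}$ from yielding either an $L^\infty$ or an upper $L^q$ bound on $u_j^\delta$. The resolution I propose is to exploit the Nehari identity $g_j'(1)=0$ so that the two extreme regimes of $\|v\|_q$ control each other, using the freedom to shrink $\delta$ within \eqref{eq9} precisely to close the bounded-$\|v\|_q$ regime.
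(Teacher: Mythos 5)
The overall plan matches the paper: decouple the energy into per-bump contributions $J_\delta(t u_j^\delta)$ via \eqref{9}, then prove a uniform quadratic drop of the scalar profile $g_j(t)=J_\delta(t u_j^\delta)$ away from its maximizer $t=1$. Your Step~2 identity
\[
g_j''(1)=(2-q)A_j+(q-1)\delta\int u_j^\delta\bigl[K-(\delta+u_j^\delta)^{q-2}\bigr]
\]
is correct, and combined with the estimate from the proof of Lemma~\ref{lem2.3} (which by \eqref{eq10} gives not only $A_j\leq -c$ but $A_j\leq -(1-t_*)\|u_j^\delta\|_q^q$, with $c$ independent of $\delta$) yields a uniform $g_j''(1)\leq -c_1$; the two-regime split in $\|u_j^\delta\|_q$ is unnecessary, and the extra shrinking of $\delta$ you flag is genuinely needed on this route, whereas the paper's route avoids it.

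The gap is in Step~3, specifically the sentence ``by continuity, $g_j''(t)\leq -c_1/2$ on some interval $[1-\tau,1+\tau]$ with $\tau>0$ uniform.'' Continuity of $g_j''$ at a single point gives no uniform interval: the modulus of continuity of $g_j''$ near $t=1$ is governed by $g_j'''(t)=(q-1)(q-2)\int(\delta+tu_j^\delta)^{q-3}(u_j^\delta)^3$, and membership in $S_{\boldsymbol x}$ gives you $\|u_j^\delta\|_q\geq c$ but no upper bound on $u_j^\delta$ in any norm, so $g_j'''$ is not uniformly controlled. The monotonicity of $g_j''$ (from $g_j'''<0$) handles $t\geq 1$ but points the wrong way for $t<1$. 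Your fallback near $t=0$ — concavity of $g_j'$ giving $g_j(1)\geq\tfrac12 g_j'(0)$ — has the same defect: $g_j'(0)=\int(\delta^{q-1}-\delta K)u_j^\delta$ is controlled from below by $\|u_j^\delta\|_1$, and $\|u_j^\delta\|_q\geq c$ does not bound $\|u_j^\delta\|_1$ from below (a tall narrow spike defeats it). The paper avoids both problems by deriving a bound valid at \emph{every} $t>0$: using $g_j'(1)=0$, the concavity of $s\mapsto s^{q-1}$, and $(\delta+tv)^{q-2}\leq t^{q-2}v^{q-2}$, it shows
\[
g_j''(t)\leq\bigl((q-1)t^{q-2}-(1-t_*)\bigr)\|u_j^\delta\|_q^q,
\]
which together with $\|u_j^\delta\|_q\geq c$ is $\leq -\tilde c<0$ on a universal interval containing $1$; from there your Step~3 Taylor-plus-monotonicity argument closes. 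If you want to keep your route, you must record the \emph{stronger} conclusion of Step~2, namely $g_j''(1)\leq -c_1'\|u_j^\delta\|_q^q$ (your computation actually gives this), and pair it with $|g_j'''(t)|\leq (q-1)(2-q)t^{q-3}\|u_j^\delta\|_q^q$ for $t$ bounded away from $0$; then $\tau$ comes out as a ratio of these two universal constants and the $\|u_j^\delta\|_q^q$ factors cancel.

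Two minor points: (a) the lemma is only invoked with $1+s_j>0$, so your separate treatment of $1+s\leq 0$ is not needed; (b) in any case the assertion there is false — for $t<0$ small one has $J_\delta(tu_j^\delta)=J_\delta(|t|u_j^\delta)+2|t|\delta\int K u_j^\delta>0$, so $g_j(0)=0$ and $g_j'(0)>0$ do not force $g_j(t)\leq 0$ for $t\leq 0$.
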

\begin{proof}
Note that
$$I (u_\delta+\sum_{j=1}^k (1+s_j)u_j^\delta)=I(u_\delta)+\sum_{j=1}^k J_\delta((1+s_j)u_j^\delta).$$
  For the function
$
\mathcal I_{u,j}(t):=J_\delta(tu_j^\delta)
$, we have \eqref{eq13} and
$$
\begin{aligned}
&\mathcal I_{u,j}''(t)=\int_{\R^N}|\nabla u_j^\delta|^2-K(x)(u_j^\delta)^2dx
+\int_{\R^N}(q-1)(\delta+tu_j^\delta)^{q-2}(u_j^\delta )^2dx.
\end{aligned}
$$
Since $\mathcal I_{u,j}'(1)=0$, $t_*\in(0,2-q)$, 
 $s\mapsto s^{q-1}$ is concave for $s\geq 0$, and $\delta<(t_*/a_1)^\frac{1}{2-q}$, we obtain
$$
\begin{aligned}
\mathcal I_{u,j}''(t)=&\int_{\R^N}|\nabla u_j^\delta|^2-K(x)(u_j^\delta)^2dx
+\int_{\R^N}(q-1)(\delta+tu_j^\delta)^{q-2}(u_j^\delta )^2dx\\
=&\int_{\R^N}(q-1)(\delta+tu_j^\delta)^{q-2}(u_j^\delta )^2 
-(\delta+u_j^\delta)^{q-1}u_j^\delta 
+\delta K(x) u_j^\delta dx\\
\leq&\int_{\R^N}(q-1)t^{q-2}(u_j^\delta)^{q} 
-(t_*\delta+(1-t_*) u_j^\delta)^{q-1}u_j^\delta 
+\delta a_1 u_j^\delta dx\\
\leq&\int_{\R^N}(q-1)t^{q-2}(u_j^\delta)^{q} 
-(1-t_*)(u_j^\delta)^{q}-t_*\delta^{q-1}u_j^\delta
+\delta a_1 u_j^\delta dx\\
\leq &\int_{\R^N}((q-1)t^{q-2}-(1-t_*))(u_j^\delta)^{q}dx.
\end{aligned}
$$
By Lemma \ref{lem2.3}, there is $c>0$ such that 
  $\mathcal I_{u,j}''(t)\leq -c$ for each $t$ in a small  interval that is independent of $u$ and contains $1$. Recalling that, for $t\in(0,+\infty)$,
$\mathcal I_{u,j}'(t)$ has a unique zero  at $t=1$ and $\mathcal I_{u,j}'''(t)<0$, we have
$J_\delta((1+s_j)u_j^\delta)\leq J_\delta(u_j^\delta)-c_0\min\set{s_j^2,1}$. Combining this with the fact
$I(v)=I(v_\delta)+\sum_{j=1}^k J_\delta(v_j^\delta)$,
we complete the proof.
\end{proof}
\begin{remark} 
According to the proof of the above lemma, we know that $\mathcal I_{u,j}''(t)<0$ if $\mathcal I_{u,j}'(t)=0$ with $t>0$. By the implicit function theorem, $t_u^j$ given in Corollary \ref{cor2.2} is continuous in $u$.
\end{remark}
\begin{lemma}\label{lem 4.8}
Let $t_u$ be defined in Lemma \ref{lem2.1}.
Then $\{t_{w_y}\ |\ y\in\R^N\}$ is bounded, where $w_y= w(\cdot -y)$.
\end{lemma}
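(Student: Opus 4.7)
The plan is to characterize $t_{w_y}$ as the unique positive root of $\mathcal{I}'_{w_y}(t)=0$ from Lemma \ref{lem2.1} and then extract a uniform-in-$y$ upper bound by combining the negativity of the leading coefficient with the sublinear growth of the nonlinear term. Writing out the defining equation via \eqref{eq13},
\begin{equation*}
t_{w_y}\,A(y) + \int_{\R^N}(\delta+t_{w_y} w_y^\delta)^{q-1} w_y^\delta\,dx = \delta\int_{\R^N} K(x) w_y^\delta\,dx,
\end{equation*}
where $A(y):=\int_{\R^N}|\nabla w_y^\delta|^2-K(x)(w_y^\delta)^2\,dx$.

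First I would establish a uniform negative upper bound $A(y)\le -c<0$. Since $w_y^\delta$ is just a translate of $w^\delta$, every autonomous norm is $y$-independent, and the only $y$-dependence enters through $\int K(x)(w_y^\delta)^2\,dx$. The computation behind \eqref{wy} yields
\begin{equation*}
A(y)=\int_{\R^N}(\delta-w^{q-1})w^\delta\,dx+\int_{\R^N}(1-K(x))(w_y^\delta)^2\,dx;
\end{equation*}
the first summand is a fixed strictly negative constant by the choice of $\delta$ in \eqref{eq9}, and the second is $O(\alpha_1)$ uniformly in $y$, thanks to Hölder's inequality on the compactly supported set $B(y,R^*)\supset\supp w_y^\delta$ and the definition of $\|K-1\|_{p,\mathrm{loc}}$.

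Next, since $K\ge 0$, the right-hand side of the defining identity is nonnegative, so
\begin{equation*}
c\,t_{w_y}\le -t_{w_y}A(y)\le \int_{\R^N}(\delta+t_{w_y}w_y^\delta)^{q-1}w_y^\delta\,dx.
\end{equation*}
Using the subadditivity $(a+b)^{q-1}\le a^{q-1}+b^{q-1}$ valid for $a,b\ge0$ and $q-1\in(0,1)$, the integral on the right is bounded by $\delta^{q-1}\|w^\delta\|_1+t_{w_y}^{q-1}\|w^\delta\|_q^q$, both $y$-independent. This produces the scalar inequality $c\,t_{w_y}\le C_1+C_2\,t_{w_y}^{q-1}$ with exponent $q-1\in(0,1)$, which immediately forces a uniform upper bound on $t_{w_y}$.

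The only step requiring any care is the uniform lower bound $-A(y)\ge c$; all remaining steps are routine algebraic manipulations. That step is essentially a careful repeat of \eqref{wy} with explicit tracking of the $y$-dependent error via $\|K-1\|_{p,\mathrm{loc}}$, and is more a bookkeeping point than a genuine obstacle. A lower bound on $t_{w_y}$ is not needed for the conclusion, but were one wanted, it would follow similarly from $\mathcal{I}'_{w_y}(0)>0$ and the continuous dependence of $\mathcal{I}'_{w_y}$ on $t$ near $0$.
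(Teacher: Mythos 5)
Your proposal is correct and follows the same approach as the paper: both start from the critical-point identity from \eqref{eq13}, use the computation in \eqref{wy} to get a uniform negative bound $A(y)\le -c<0$, apply the subadditivity $(\delta+t w_y^\delta)^{q-1}\le \delta^{q-1}+t^{q-1}(w_y^\delta)^{q-1}$, and conclude from $q-1<1$. The only (minor) difference is that you discard the term $\delta\int K w_y^\delta$ by noting it is nonnegative, whereas the paper retains it and approximates it by $\delta\|w^\delta\|_1+O(\alpha)$; the resulting scalar inequality and conclusion are the same.
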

\begin{proof}
  By \eqref{eq13}, 
  \[
    t_{w_y}\int_{\R^N}|\nabla w_y^\delta|^2-K(x)(w_y^\delta)^2dx
    +\int_{\R^N}(\delta+t_{w_y}w_y^\delta)^{q-1}w_y^\delta dx
    -\delta\int_{\R^N}K(x) w_y^\delta dx=0  
  \]
  Since 
  $(\delta+t_{w_y}w_y^\delta)^{q-1}\leq  \delta^{q-1}+t_{w_y}^{q-1}(w_y^\delta)^{q-1}$, we have by \eqref{wy}
  \[
    t_{w_y}^{q-1}\|w^\delta\|_q^q \geq (\delta- \delta^{q-1}+O(\alpha))\|w^\delta\|_{1}  +t_{w_y}(\int_{\R^N} (  w^{q-1}-\delta) w^\delta +O(\alpha)).
  \]
  The proof is complete for $\alpha=\|K(x)-1\|_{p,loc}\leq \alpha_1$ since $q-1<1$ and $\int_{\R^N} (  w^{q-1}-\delta) w^\delta+O(\alpha)>0$.
\end{proof}
 \begin{lemma}\label{lem2.5}
 For all $k\in\N\setminus\{0\}$ and all $\boldsymbol x=\{x_i\}_{i=1}^k\in\mathcal K_k$, if $ u\in S_{\boldsymbol x}$ such that $I (u)\leq Ck,$ then $\|u_\delta\|+\sum_{i=1}^k\|u_i^\delta\|\leq C'k^{\theta'}$, where $C',\theta'$ are independent of $u$, $k$, $\boldsymbol x$, and $K$.
 \end{lemma}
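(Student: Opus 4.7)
The plan is to exploit the splitting $I(u)=I(u_\delta)+\sum_{j=1}^k J_\delta(u_j^\delta)$ already established in \eqref{9}, together with the fact (from Corollary~\ref{cor2.2} and \eqref{eq 33}) that each summand is nonnegative. Consequently $I(u_\delta)\le Ck$ and $\sum_j J_\delta(u_j^\delta)\le Ck$, and I will bound $\|u_\delta\|$ and $\sum_i\|u_i^\delta\|$ separately.

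For the submerged part $u_\delta$, I would use the pointwise bound $0\le u_\delta\le\delta$ to obtain
\[
\|u_\delta\|_2^2\;=\;\int u_\delta^2\;\le\;\delta^{2-q}\int u_\delta^q\;=\;\delta^{2-q}\|u_\delta\|_q^q,
\]
and then plug this into $I(u_\delta)=\tfrac12\|\nabla u_\delta\|_2^2-\tfrac12\!\int K u_\delta^2+\tfrac1q\|u_\delta\|_q^q$. Since $K\le a_1$ and $\delta$ has been chosen in \eqref{eq9} so that $a_1\delta^{2-q}<t_*<2-q<2/q$, the coefficient $\tfrac1q-\tfrac{a_1\delta^{2-q}}{2}$ is positive. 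This yields
\[
I(u_\delta)\ge\tfrac12\|\nabla u_\delta\|_2^2+c\|u_\delta\|_q^q
\]
for a uniform $c>0$, hence $\|\nabla u_\delta\|_2^2+\|u_\delta\|_q^q\le Ck$ and $\|u_\delta\|\le Ck^{1/q}$.

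For each emerging piece $u_i^\delta$, I would first use the Nehari identity $J_\delta'(u_i^\delta)u_i^\delta=0$ (equivalently \eqref{eq10}) to eliminate the Dirichlet energy from $J_\delta(u_i^\delta)$ and obtain, after the algebraic identity $(\delta+u_i^\delta)^q=(\delta+u_i^\delta)^{q-1}\delta+(\delta+u_i^\delta)^{q-1}u_i^\delta$, the expression
\[
J_\delta(u_i^\delta)=\Big(\tfrac1q-\tfrac12\Big)\!\int(\delta+u_i^\delta)^{q-1}u_i^\delta+\tfrac{\delta}{q}\!\int\!\big[(\delta+u_i^\delta)^{q-1}-\delta^{q-1}\big]-\tfrac{\delta}{2}\!\int K u_i^\delta.
\]
The middle bracket is $\ge 0$ since $q>1$; the first term dominates $(\tfrac1q-\tfrac12)\|u_i^\delta\|_q^q$ because $(\delta+u_i^\delta)^{q-1}u_i^\delta\ge(u_i^\delta)^q$; and the negative term is controlled by Hölder and Young as $\tfrac{a_1\delta}{2}\int u_i^\delta\le \tfrac{a_1\delta}{2}|B(0,\rho)|^{1-1/q}\|u_i^\delta\|_q\le\tfrac12(\tfrac1q-\tfrac12)\|u_i^\delta\|_q^q+C$. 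Thus $J_\delta(u_i^\delta)\ge c\|u_i^\delta\|_q^q-C$ with $c,C$ depending only on $q,N,\delta,a_1$. Summing over $i$ and using $\sum J_\delta(u_i^\delta)\le Ck$ gives
\[
\sum_{i=1}^k\|u_i^\delta\|_q^q\le Ck,
\]
and Hölder's inequality converts this into $\sum_{i=1}^k\|u_i^\delta\|_q\le k^{1-1/q}(Ck)^{1/q}=C'k$.

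It remains to bound $\sum_i\|\nabla u_i^\delta\|_2$. Using the Nehari identity once more and dropping the nonpositive $-\int(\delta+u_i^\delta)^{q-1}u_i^\delta$ term,
\[
\|\nabla u_i^\delta\|_2^2\le a_1\|u_i^\delta\|_2^2+a_1\delta\!\int u_i^\delta\le a_1\delta^{2-q}\|u_i^\delta\|_q^q+a_1\delta|B(0,\rho)|^{1-1/q}\|u_i^\delta\|_q,
\]
so summing and invoking the previous two bounds gives $\sum_i\|\nabla u_i^\delta\|_2^2\le Ck$, and Cauchy--Schwarz yields $\sum_i\|\nabla u_i^\delta\|_2\le k^{1/2}(Ck)^{1/2}=Ck$. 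Combining with the submerged estimate produces $\|u_\delta\|+\sum_{i=1}^k\|u_i^\delta\|\le Ck^{1/q}+C'k\le C''k$, so the conclusion holds with $\theta'=1$. The only delicate point, and the step I would verify most carefully, is the algebraic reduction of $J_\delta(u_i^\delta)$ to a coercive expression with constants uniform in $k$, $K$ and $\boldsymbol x$; everything else is a routine combination of Hölder, Young and the quantitative smallness of $\delta$ built into \eqref{eq9}.
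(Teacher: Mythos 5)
Your overall plan mirrors the paper's: split $I(u)=I(u_\delta)+\sum_j J_\delta(u_j^\delta)$, bound the submerged part directly, and use the Nehari identity to eliminate the Dirichlet term in each $J_\delta(u_i^\delta)$. Your refinement of summing the bounds $J_\delta(u_i^\delta)\ge c\|u_i^\delta\|_q^q-C$ over $i$ and then applying H\"older is a nice improvement that yields the sharper exponent $\theta'=1$, whereas the paper works term-by-term and ends up with a larger power of $k$.

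However, the last step has a genuine error. You claim
\[
a_1\|u_i^\delta\|_2^2\le a_1\delta^{2-q}\|u_i^\delta\|_q^q,
\]
which requires the pointwise bound $u_i^\delta\le\delta$. That bound holds for the \emph{submerged} part $u_\delta$ (and is correctly used there), but the \emph{emerging} part $u_i^\delta=(u-\delta)^+$ on $B(x_i,\rho)$ has no a priori $L^\infty$ bound; it can be arbitrarily large. In general $\|v\|_2^2\le\|v\|_\infty^{2-q}\|v\|_q^q$, so without an $L^\infty$ control on $u_i^\delta$ this step fails, and with it the gradient estimate. The paper avoids this by invoking the Gagliardo--Nirenberg inequality \eqref{GN} together with Young's inequality to absorb the $\|u_i^\delta\|_2^2$ term into the Dirichlet energy: $a_1\|u_i^\delta\|_2^2\le\tfrac12\|\nabla u_i^\delta\|_2^2+C\|u_i^\delta\|_q^2$, which gives $\|\nabla u_i^\delta\|_2\le C(1+\|u_i^\delta\|_q)$. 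Plugging this into your summed $L^q$ bound recovers $\sum_i\|\nabla u_i^\delta\|_2\le Ck$ and the rest of your argument goes through, still with $\theta'=1$. So the structure of your proof is sound and slightly sharper than the paper's, but the interpolation step for the emerging parts must be replaced by the Gagliardo--Nirenberg/Young absorption.
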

 \begin{proof}
 By \eqref{eq 33} and \eqref{eq9}: 
 $\delta<\left(\frac{t_*}{a_1}\right)^\frac{1}{2-q}<\left(\frac{1}{qa_1}\right)^\frac{1}{2-q}$,
 we have
 \begin{equation*} 
 Ck\geq I(u)=I(u_\delta)+J_\delta(u^\delta)
 >I (u_\delta)+J_\delta(0)\geq \frac12\|\nabla u_\delta\|_2^2+\frac{1}{2q} \|u_\delta\|_q^q.
 \end{equation*}
 So $\|u_\delta\|\leq Ck$. 
 On the other hand, since $I(u_\delta)>0$, for $\mathcal S_{i}=\text{supp }u_i^\delta$, we have
 \begin{equation*} 
 \begin{aligned}
 Ck\geq J_\delta(u_i^\delta)-\frac12J_\delta'(u_i^\delta)u_i^\delta
 =&-\frac12\delta\int_{\R^N}K(x) u_i^\delta dx+\frac1q\int_{\mathcal S_{i}}(\delta+u_i^\delta)^qdx-\frac1q|\mathcal S_{i}|\delta^q-\frac12\int_{\R^N}(\delta+u_i^\delta)^{q-1}u_i^\delta dx\\
 \geq&(\frac1q-\frac12)\int_{\R^N}(u_i^\delta)^qdx
 -\frac12\int_{\R^N}\delta^{q-1}u_i^\delta +\delta K(x) u_i^\delta dx\\
 \geq& C_q\|u_i^\delta\|_q^q-C_{\rho}\|u_i^\delta\|_q.
 \end{aligned}
 \end{equation*}
 Thus $\|u_i^\delta\|_q\leq Ck^{\theta''}$ for some $\theta''\geq 1$. By the  Gagliardo--Nirenberg inequality
 $$\|u_i^\delta\|_2\leq C_{q,N}\|\nabla u_i^\delta\|_2^\theta\|u_i^\delta\|_q^{1-\theta},\ \ \ \text{where\ } \frac12=\theta\cdot(\frac12-\frac1N)+(1-\theta)\cdot\frac1q,$$
and the Young inequality, we deduce
 \begin{equation*} 
 \begin{aligned}
 Ck\geq J_\delta(u_i^\delta)\geq \frac12\|\nabla u_i^\delta\|_2^2-\frac{a_1}{2}\|u_i^\delta\|_2^2-C_\rho\geq
 \frac14\|\nabla u_i^\delta\|_2^2-C\|u_i^\delta\|_q^2-C_\rho.
 \end{aligned}
 \end{equation*}
 Thus $\|\nabla u_i^\delta\|_2\leq C'k^{\theta'}$, and the proof is completed.
 \end{proof}

\subsection{Local minimizer}
For $k\in\N\setminus\{0\}$, $\boldsymbol x=\{x_j\}_{j=1}^k\in\mathcal K_k$ and
$d\geq 0$,
recall that
\begin{align*} A(\boldsymbol x,d):&=  \bigcup_{j=1}^kB(x_j,R^*+d),\end{align*}
and define
\begin{align*}
S_{\boldsymbol{x}, d}:& =S_{\boldsymbol{x}} \cap H_0^1(\A d).
\end{align*}
We consistently regard  $H_0^1(\A d)$ as a subspace of $X_q$ by extending functions as zero outside
$\A d$.  On this subspace, the norm $\|\cdot\|_{H^1(\A d)}$ is equivalent to $\|\cdot\|$ due to the Sobolev embedding theorem.
By Lemma \ref{sss}, we know $S_{\boldsymbol{x}, d}\neq \emptyset$.
We also define
 \begin{equation}\label{mux}
  \mu_d(\boldsymbol x)=\mu_d(x_1,\dots,x_k)=\inf_{u\in S_{\boldsymbol x, d}} I (u), \quad \mu (\boldsymbol x)=\mu (x_1,\dots,x_k)=\inf_{u\in S_{\boldsymbol x }} I (u). 
 \end{equation}
 We will demonstrate that this minimization problem admits a minimizer, and denote the set of minimizers of \eqref{mux} by
$$M_{\boldsymbol x,d}=M_{x_1,\dots,x_k,d}=\{u\in S_{\boldsymbol x, d}\ |\ I (u)=\mu_d(\boldsymbol x)\},$$
$$M_{\boldsymbol x }=M_{x_1,\dots,x_k }=\{u\in S_{\boldsymbol x }\ |\ I (u)=\mu (\boldsymbol x)\}.$$
Then we define 
\[
  \mu_{k,d}=\sup_{\boldsymbol x\in\mathcal K_k}\mu_d(\boldsymbol x),\quad 
  \mu_{k }=\sup_{\boldsymbol x\in\mathcal K_k}\mu (\boldsymbol x).
\]
Analogously, 
we define 
\[
  \mu^\infty(\boldsymbol x)=\inf_{u\in S_{\boldsymbol x}^\infty} I^\infty(u), \quad  \mu_d^\infty(\boldsymbol x)=\inf_{u\in S_{\boldsymbol x, d}^\infty} I^\infty(u),
\]
$$M_{\boldsymbol x}^\infty =\{u\in S_{\boldsymbol x}^\infty\ |\ I^\infty(u)=\mu^\infty(\boldsymbol x)\},
\quad
M_{\boldsymbol x, d}^\infty =\{u\in S_{\boldsymbol x, d}^\infty\ |\ I^\infty(u)=\mu^\infty(\boldsymbol x, d)\},
$$
and 
\[
  \mu_k^\infty=\sup_{\boldsymbol x\in\mathcal K_k}\mu^\infty(\boldsymbol x),  \quad \mu_{k,d}^\infty=\sup_{\boldsymbol x\in\mathcal K_k}\mu_d^\infty(\boldsymbol x),
\]
where $S_{\boldsymbol x}^\infty$ is defined as the set $S_{\boldsymbol x}$ for $K(x)\equiv1$,
and $S_{\boldsymbol x, d}^\infty= S_{\boldsymbol x}^\infty \cap H_0^1(\A d)$.
It is clear that $\mu_d(\boldsymbol{x}), \mu_d^\infty(\boldsymbol{x}), \mu_{k,d}, \mu_{k,d}^\infty$  are
decreasing with respect to $d\geq 0$ and 
$\mu_d(\boldsymbol{x})\geq \mu(\boldsymbol{x})$, $\mu_d^\infty(\boldsymbol{x})\geq \mu^\infty(\boldsymbol{x})$,
 $\mu_{k,d}\geq \mu_{k}$, and $\mu_{k,d}^\infty\geq \mu_{k}^\infty$.

\begin{proposition}\label{pro2.5}
For $k \in \mathbb{N}\setminus\{0\}$, $\boldsymbol{x} = \{x_i\}_{i=1}^k \in \mathcal{K}_k$, and $d \geq 0$, if a sequence $\{u_n\} \subset S_{\boldsymbol{x}, d}$ satisfies $I(u_n) \to \mu_d(\boldsymbol{x})$, then it contains a convergent subsequence. Moreover, $\mu_d(\boldsymbol{x})$ is attained by some $u \in S_{\boldsymbol{x}, d}$.
\end{proposition}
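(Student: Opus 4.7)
The plan is to extract a weak limit of the minimizing sequence by standard compactness on the bounded domain $A(\boldsymbol{x},d)$, identify it as an element of the constraint set $S_{\boldsymbol{x},d}$ where the infimum is attained, and upgrade the convergence to strong $H^1$ by matching energies. Since $I(u_n)\to\mu_d(\boldsymbol{x})$ is bounded, Lemma~\ref{lem2.5} furnishes a uniform $H^1(A(\boldsymbol{x},d))$ bound on $\{u_n\}$; Rellich--Kondrachov compactness yields, up to a subsequence, $u_n\rightharpoonup u$ in $H_0^1(A(\boldsymbol{x},d))$, $u_n\to u$ in $L^2\cap L^q$, and $u_n\to u$ almost everywhere. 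Since the map $v\mapsto(v-\delta)^+$ is $L^2$-Lipschitz, $(u_n)^\delta\to u^\delta$ in $L^2$, and the decomposition $(u_n)^\delta=\sum_i(u_n)_i^\delta$ over the disjoint balls $B(x_i,\rho)$ passes to $u^\delta=\sum_i u_i^\delta$ with $u_i^\delta\in H_0^1(B(x_i,\rho))$.

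Next I verify the defining conditions of $S_{\boldsymbol{x},d}$ at $u$. Nonnegativity follows from a.e.\ convergence; the emergence structure is secured by the $L^q$ lower bound $\|(u_n)_i^\delta\|_q\geq c$ from Lemma~\ref{lem2.3}, which survives in $L^q$-strong limits, so each $u_i^\delta\not\equiv 0$; the local barycenter constraint $\beta_i(u)=\lim\beta_i(u_n)=0$ follows from $L^2$-continuity of $\beta_i$ on $\{\|u_i^\delta\|_2\geq c'\}$. The remaining Nehari condition $J_\delta'(u_i^\delta)u_i^\delta=0$ is the delicate step: passing $J_\delta'((u_n)_i^\delta)(u_n)_i^\delta=0$ to the limit, the strong $L^2\cap L^q$ convergence handles every term except $\int|\nabla(u_n)_i^\delta|^2$, which is only weakly lower semicontinuous, yielding only the one-sided bound $J_\delta'(u_i^\delta)u_i^\delta\leq 0$.

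To upgrade this to equality I argue by contradiction: suppose $J_\delta'(u_j^\delta)u_j^\delta<0$ for some $j$. The uniform upper bound $\int|\nabla(u_n)_i^\delta|^2-K((u_n)_i^\delta)^2<-c$ from Lemma~\ref{lem2.3} transfers to the weak limit via lower semicontinuity and strong $L^2$ convergence, so the hypothesis of Corollary~\ref{cor2.2} is met and unique Nehari rescalings $t_i^*\in(0,1]$ exist with $t_j^*<1$. The function $\tilde u:=u_\delta+\sum_i t_i^*u_i^\delta$ lies in $S_{\boldsymbol{x},d}$ (barycenters are invariant under positive scaling). On the Nehari manifold the identity $J_\delta(v)=J_\delta(v)-\tfrac12 J_\delta'(v)v$ holds, and its right-hand side depends only on $L^2\cap L^q$ data of $v$, so
\[
\lim_{n\to\infty}J_\delta\bigl((u_n)_i^\delta\bigr)=J_\delta(u_i^\delta)+\tfrac12\bigl|J_\delta'(u_i^\delta)u_i^\delta\bigr|,
\qquad
\lim_{n\to\infty}I\bigl((u_n)_\delta\bigr)=I(u_\delta)+\tfrac12\beta_\delta,
\]
with gradient excess $\beta_\delta\geq 0$. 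Combining with $I(\tilde u)\geq\mu_d(\boldsymbol{x})$ gives
\[
\sum_i\bigl(J_\delta(t_i^*u_i^\delta)-J_\delta(u_i^\delta)\bigr)\geq \tfrac12\beta_\delta+\sum_i \tfrac12\bigl|J_\delta'(u_i^\delta)u_i^\delta\bigr|.
\]
On the other hand, for $h_i(t):=J_\delta(tu_i^\delta)$, Lemma~\ref{lem2.1} gives $h_i'''<0$, so $h_i'$ is concave on $[t_i^*,1]$; using $h_i'(t_i^*)=0$ and integrating along the chord,
\[
J_\delta(t_i^*u_i^\delta)-J_\delta(u_i^\delta)=-\!\int_{t_i^*}^1 h_i'(s)\,ds\leq \tfrac12(1-t_i^*)\,\bigl|J_\delta'(u_i^\delta)u_i^\delta\bigr|.
\]
Summing and using $t_j^*<1$ strictly forces $\beta_\delta<0$, a contradiction. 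Hence $t_i^*=1$ for all $i$, $u\in S_{\boldsymbol{x},d}$, and $\beta_\delta=0$.

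Finally, $\mu_d(\boldsymbol{x})\leq I(u)\leq\liminf I(u_n)=\mu_d(\boldsymbol{x})$ gives attainment at $u$, and $I(u_n)\to I(u)$ together with strong $L^2\cap L^q$ convergence forces $\|\nabla u_n\|_2\to\|\nabla u\|_2$, upgrading weak to strong convergence in $H_0^1(A(\boldsymbol{x},d))$. The main obstacle is the Nehari passage in the third paragraph: the weak-lwsc gradient excess $\beta_\delta$ must be counterbalanced against the total energy cost of the Nehari rescaling, and the contradiction succeeds only because the concavity of $h_i'$ yields the strictly subunital factor $(1-t_i^*)<1$ in the mean-value estimate, turning the sum into a strict inequality.
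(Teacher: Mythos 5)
Your proof is correct, and while it shares the same skeleton as the paper's argument (extract a weak limit, verify the emergence and barycenter constraints survive, Nehari-rescale the limit to land in $S_{\boldsymbol{x},d}$, then use minimality plus a quantitative concavity estimate to force the rescaling factors to equal one and the gradient excess to vanish), the key passage is organized differently. The paper compares $I(\bar u)$ with $\liminf I\bigl((u_n)_\delta + \sum t_i(u_n)_i^\delta\bigr)$ via weak lower semicontinuity and then applies the already-proved Lemma~\ref{lem2.4} to the rescaled $u_n$, reaching $I(\bar u)\leq \mu_d(\boldsymbol{x}) - c_0\sum\min\{(t_i-1)^2,1\}$ in one chain. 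You instead compute $\lim_n J_\delta\bigl((u_n)_i^\delta\bigr)$ \emph{exactly} by exploiting that $J_\delta(v)-\tfrac12 J_\delta'(v)v$ contains no gradient term (hence is continuous under $L^1\cap L^q$ convergence on the bounded domain, after dominating $(\delta+v)^{q-1}v$), isolate the gradient excess $\beta_\delta\geq 0$ of the submerged part separately, and close the argument with a direct mean-value bound for $J_\delta(t_i^* u_i^\delta)-J_\delta(u_i^\delta)$ obtained from $h_i'''<0$ rather than citing Lemma~\ref{lem2.4}. Both routes ultimately rest on the same concavity of $\mathcal I_u'$; yours makes the slack terms ($\beta_\delta$ and $|J_\delta'(u_i^\delta)u_i^\delta|$) explicit and shows each must vanish, which is arguably more illuminating, at the cost of a few extra verifications you should make explicit in a full writeup: pass to a further subsequence so that the individual limits $\lim I((u_n)_\delta)$ and $\lim J_\delta\bigl((u_n)_i^\delta\bigr)$ exist, and justify that $\frac1q\int_{\supp v}(\delta+v)^q-\frac1q|\supp v|\delta^q = \frac1q\int[(\delta+v)^q-\delta^q]$ converges under the available $L^q$ and a.e.\ convergence (so that $|\supp v|$ does not appear as an independent discontinuity). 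Minor nit: your ``$\lim I((u_n)_\delta)=I(u_\delta)+\tfrac12\beta_\delta$'' reads more cleanly if $\beta_\delta$ is defined as the excess in $\|\nabla(u_n)_\delta\|_2^2$, consistent with the $\tfrac12$ prefactor.
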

\begin{proof}
Set $v=\max_{i\in\{1,\dots,k\}}\{w_{x_i}\}$. Then $v_i^\delta=(w_{x_i})^\delta$. By Corollary \ref{cor2.2}, Lemma \ref{sss} and Lemma \ref{lem 4.8}, we have 
  \[ 
    \mu_d(\boldsymbol x) \le I(v_\delta+ \sum_{i=1}^kt_{w_{x_i}}(w_{x_i})^\delta)\leq Ck.
  \]
Let   $\{u_n\}\subset S_{\boldsymbol x,d}$ be a minimizing sequence. Then by Lemma \ref{lem2.5}, 
$\|u_n\|\leq Ck^{\theta'}$.
Up to a subsequence, we assume $u_n\rightharpoonup u$ in $X_q\cap H_0^1(\A d)$,
$u_n\to u$ in $L^r_{loc}(\R^N)$ with $r\in[1,2^*)$ and $u_n(x)\to  u(x)$ a.e. in $\R^N$. Thus we have $ u(x)\leq \delta$ in
$\R^N\setminus B(x_i,\rho)$, $\beta_i(u)=0$, and by Lemma \ref{lem2.3},
\begin{equation}\label{eq14}
\int_{\R^N}|\nabla u_i^\delta|^2-K(x)(u_i^\delta)^2dx
\leq \liminf_{n\to\infty}
\int_{\R^N}|\nabla (u_n)_i^\delta|^2-K(x)((u_n)_i^\delta)^2dx\leq -c<0.
\end{equation}
By Corollary \ref{cor2.2} and \eqref{eq14}, there exist
$t_i>0$, such that $\overline u:=u_\delta+\sum_{i=1}^k t_i u_i^\delta\in S_{\boldsymbol x,d}$. Then $\overline u$ is the minimizer, since
$$I(\overline u)
= I(u_\delta+\sum_{i=1}^k t_i u_i^\delta)
\leq \liminf_{n\to\infty}
I((u_n)_\delta+\sum_{i=1}^k t_i (u_n)_i^\delta)
\leq \liminf_{n\to\infty}I(u_n)-c_0\sum_{i=1}^k\min\set{(t_i-1)^2,1}
\leq\mu_d(\boldsymbol x),$$
where we have used  Lemma \ref{lem2.4}. 
Therefore, $I(\overline u)=\mu_d(\boldsymbol x)$, and thus the above inequalities hold with equality and $t_i = 1$ for $i=1,\cdots,k$. In particular, $I(u)=\liminf_{n\to \infty} I(u_n)$.
Recalling that
$u_n\to u$ in $L^r_{loc}(\R^N)$ with $r\in[1,2^*)$ and that $\{u_n\}\subset  H_0^1(\A d)$, we get the conclusion that
$\{u_n\}$ contains a convergent subsequence.
\end{proof}

\begin{lemma}\label{lem2.10}
  Let $k \in \mathbb{N}\setminus\{0\}$, $ \boldsymbol x =\{x_i\}_{i=1}^k \in\mathcal K_k$  and $d\geq 0$.
If  $v\in M_{\boldsymbol{x}, d}$   satisfies $\text{supp}(v^\delta)\subset \bigcup_{i=1}^k\overline {B(x_i,\rho_0)}$ for some $\rho_0\in(0,\rho]$,
then $v$ satisfies
\begin{equation} \label{26}
        -\Delta v-K(x)v+ v^{q-1} =0, \quad 0\leq v<\delta,  \quad \text{in\ }\A d\setminus \bigcup_{i=1}^k\overline {B(x_i,\rho_0)}.
\end{equation}
\end{lemma}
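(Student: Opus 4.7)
The strategy is to exploit the hypothesis $\mathrm{supp}(v^\delta) \subset \bigcup_i \overline{B(x_i,\rho_0)}$: since $\sigma(\boldsymbol x) \le \sigma_0$ makes the balls $B(x_i,\rho)$ pairwise disjoint (because $2\rho = 2R^*-6\sigma_0 < 2R^*-\sigma_0 \le \inf_{i\ne j}|x_i-x_j|$), each $v_i^\delta$ is forced to sit inside $\overline{B(x_i,\rho_0)}$. Consequently the constraints carving out $S_{\boldsymbol x,d}$ — the Nehari identities $I'(v)v_i^\delta=0$, the barycenter conditions $\beta_i(v)=0$, and the emerging decomposition $v^\delta=\sum_i v_i^\delta$ — depend only on the restriction of $v$ to $\bigcup_i\overline{B(x_i,\rho_0)}$, and are preserved by any perturbation supported in $U:=A(\boldsymbol x,d)\setminus\bigcup_i\overline{B(x_i,\rho_0)}$ that keeps $v$ inside $[0,\delta]$ on $U$.

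For each $\phi\in C_c^\infty(U)$, I will take the truncated admissible variation
\[
\tilde v_t := \bigl((v+t\phi)\vee 0\bigr)\wedge\delta \ \text{on } U, \qquad \tilde v_t := v \ \text{on } A(\boldsymbol x,d)\setminus U.
\]
A direct check gives $(\tilde v_t)^\delta=v^\delta$, $\beta_i(\tilde v_t)=0$, and $I'(\tilde v_t)v_i^\delta=I'(v)v_i^\delta=0$ (the last because $\phi$ and $v_i^\delta$ have disjoint supports), so $\tilde v_t\in S_{\boldsymbol x,d}$ for $|t|$ small and $I(\tilde v_t)\ge I(v)$ by minimality. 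Computing $\frac{d}{dt}\big|_{t=0} I(\tilde v_t)$, the contributions from the truncation layers $\{v+t\phi<0\}$ and $\{v+t\phi>\delta\}$ are $o(|t|)$, thanks to Stampacchia's identity $\nabla v = 0$ a.e.\ on $\{v=0\}\cup\{v=\delta\}$. Restricting to $\phi$ supported in $U\cap\{0<v<\delta\}$, where two-sided variations are fully admissible, yields
\[
-\Delta v - K(x)v + v^{q-1} = 0 \quad \text{weakly in } U\cap\{0<v<\delta\},
\]
and the right-hand side $Kv - v^{q-1}$ is in $L^p_{\mathrm{loc}}$ with $p>N/2$, so standard elliptic regularity renders $v$ continuous there.

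It remains to show $v<\delta$ strictly on $U$, after which the truncation from above becomes vacuous and the equation extends weakly to all of $U$ (on $\{v=0\}$ every term of the weak form vanishes a.e.\ by Stampacchia). On $\{v>0\}\cap U$, the PDE reads $-\Delta v=v(K-v^{q-2})$. By \eqref{eq9}, $\delta^{2-q}<t_*/a_1<1/a_1$, whence $v^{q-2}\ge \delta^{q-2}>a_1\ge K$ wherever $0<v\le\delta$. Thus $v$ is strictly subharmonic on $\{v>0\}\cap U$. If $v(x_0)=\delta$ for some interior point $x_0\in U$, then $x_0$ is an interior maximum; the strong maximum principle applied to the continuous subharmonic $v$ would force $v\equiv\delta$ on the connected component of $x_0$ in $\{v>0\}\cap U$, contradicting $\Delta v>0$ there. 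Hence $v<\delta$ on $U$.

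The principal technical obstacle is the derivative computation of $I(\tilde v_t)$ in the presence of the double truncation $\vee 0$ and $\wedge\delta$: the ``overshoot'' set $\{v+t\phi>\delta\}$ and ``undershoot'' set $\{v+t\phi<0\}$ shrink toward the level sets $\{v=\delta\}$ and $\{v=0\}$ as $t\to0$, and one must show the $H^1$-energy concentrated on these vanishing layers is $o(|t|)$ — the key input being precisely $\nabla v=0$ a.e.\ on the limiting level sets. Once this is settled, the strict upper bound $v<\delta$ is a short consequence of the quantitative smallness of $\delta$ built into~\eqref{eq9}.
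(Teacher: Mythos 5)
Your variation $\tilde v_t=((v+t\phi)\vee 0)\wedge\delta$ on $U:=A(\boldsymbol x,d)\setminus\bigcup_i\overline{B(x_i,\rho_0)}$ is correctly shown to stay inside $S_{\boldsymbol x,d}$, and this is a genuinely different starting point from the paper: the paper relaxes $S_{\boldsymbol x,d}$ to the convex box $\mathcal X=\{u\in H^1_0(A(\boldsymbol x,d)):\ |u|\le\delta\ \text{on}\ U,\ u=v\ \text{on}\ \bigcup_i\overline{B(x_i,\rho_0)}\}$, uses strict convexity of $I$ on $\mathcal X$ (guaranteed by \eqref{eq9}) together with $I(|u|)=I(u)$ to identify $v$ as the unique minimizer there, gets the subsolution inequality from one-sided variations $v-th$, $h\ge0$, and only then runs the maximum principle and two-sided variations. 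Your attempt to instead differentiate $I(\tilde v_t)$ directly is where it breaks. First, the claim that the truncation-layer contributions are $o(|t|)$ is not justified: the term $\tfrac12\int_{\{v+t\phi>\delta\}}|\nabla v|^2$ does tend to $0$ by Stampacchia and dominated convergence, but there is no a priori rate — for a mere $H^1$ function $v$ (all we know at this stage) it need not be $o(|t|)$, and without that the expansion $I(\tilde v_t)-I(v)=tL+o(|t|)$ fails. Second, ``restricting to $\phi$ supported in $U\cap\{0<v<\delta\}$'' is ill-posed, since $\{0<v<\delta\}$ is not known to be open and a weak PDE is only meaningful on an open set. Third, the passage to $v<\delta$ is circular: you invoke ``the PDE on $\{v>0\}\cap U$'' and strict subharmonicity near an interior maximum $x_0$ with $v(x_0)=\delta$, but the PDE (or even a subsolution inequality) in a neighborhood of $\{v=\delta\}$ is exactly what has not yet been established.

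The remedy is to reverse the order, matching the paper's logic: first obtain a one-sided variational inequality, deduce $v<\delta$ on $U$ by the strong maximum principle, and only then pass to two-sided variations on the genuinely open set $U$. In your framework this can be done without the $o(|t|)$ claim at all: for $\phi\ge0$ take the downward competitor $\tilde v_t:=(v-t\phi)^+$ on $U$ (no upper truncation is needed since $v\le\delta$ there), note $\tilde v_t\in S_{\boldsymbol x,d}$ by the same checks, and use $I(v)\le I((v-t\phi)^+)\le I(v-t\phi)$, the second inequality holding because discarding the negative part decreases $I$ when the discarded part is bounded by $\delta$ (a consequence of \eqref{eq9}); dividing by $t>0$ and letting $t\to0$ gives the weak subsolution inequality. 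The same sandwich idea $I(v)\le I(\tilde v_t)\le I(v+t\phi)$ also rescues the two-sided step once $v<\delta$ is known. As written, though, the proof has a genuine gap in each of the three places noted above.
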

\begin{proof}
Set
$\mathcal X:=\{u\in H_0^1(\A d)\ |\   |u|\leq\delta\ \text{in }\A d\setminus \bigcup_{i=1}^k\overline {B(x_i,\rho_0)},\ u=v\ \text{in\ } \bigcup_{i=1}^k\overline {B(x_i,\rho_0)}\}$. Then 
$$I(u)=I(u_\delta)+J_\delta(u^\delta)
 =I (u_\delta)+J_\delta(v^\delta), \ \ u\in\mathcal X.$$
Easily to check, the functional $I $ is bounded from below, continuous, strictly convex and satisfies
$I(u)\geq \frac12\|\nabla u_\delta\|_2^2+\frac{1}{2q}\| u_\delta\|_q^q$ on the closed convex set $\mathcal X$.
Hence $\inf\{I(u)\ |\ u\in\mathcal X\}$ has a unique minimizer $u\in\mathcal X$. By this and $I(|u|)= I(u)$, we know that $u=|u|\in \mathcal X$ and $0\leq u \leq \delta$ in $\A d\setminus \text{supp}(v^\delta)$.  
By Proposition \ref{pro2.5} and $v\in M_{\boldsymbol{x}, d}$, we know that $v$ is a minimizer. According to the uniqueness, $u=v$.

Take any $h\in C_0^\infty(\A d\setminus \bigcup_{i=1}^k\overline {B(x_i,\rho_0)})$ with $h\geq 0$. We know $v-t h \in \mathcal X$ for small $t>0$.
Hence by $I(v-th)\geq I(v)$ for small $t>0$, we have 
$I'(v)h\leq 0$.
It means $v$ is a subsolution of \eqref{26}.
Recalling the choice of $\delta$ in \eqref{eq9}, we have
\[
-\Delta v  \leq 0,\quad 0\leq v\leq \delta\mbox{ in } \A d\setminus \bigcup_{i=1}^k\overline {B(x_i,\rho_0)}.
\]
By this and the strong maximum principle,
we know that $v<\delta$ in $\A d\setminus \bigcup_{i=1}^k\overline {B(x_i,\rho_0)}$.
Therefore, the $\inf\{I (u)\ |\ u\in\mathcal X\}$ is attained at $v$, which is inside $\mathcal{X}$. In other words, for any $h\in C_0^\infty(\A d\setminus \bigcup_{i=1}^k\overline {B(x_i,\rho_0)})$,  $v+t h \in \mathcal X$ for small $t\in\R$, and hence
$I(v+th)\geq I(v)$ for small $t\in\R$, which means that
 $v$ is a solution of \eqref{26}.
\end{proof}
\begin{remark}\label{rek4.2} (i) From the proof of Lemma \ref{lem2.10},
   we know that for each $v$ emerging around $\boldsymbol{x}\in \mathcal K_k$, and each $R\in[\rho, R^*)$,
   the following equation admits a unique solution
  \[  
  \begin{cases}
  -\Delta u-K(x)u+ u^{q-1} =0, \quad   0\leq u<\delta   \ \text{ in  } A(\boldsymbol{x}, d) \setminus \bigcup_{i=1}^k \overline{B(x_i,R)},\\
u \in H_0^1(A(\boldsymbol{x}, d)), \quad u= v \ \text{ in } \bigcup_{i=1}^k\overline{B(x_i,R)}.
\end{cases}
\]
The existence of a solution follows from the attainability of the corresponding minimization problem, and uniqueness is a consequence of the strict convexity of the functional.

(ii)
By \eqref{26},  and $K\in L^p_{loc}(\RN)$ with $p> N/2\geq 1$, we have $v\in W^{2,p}(\A d)\hookrightarrow W^{1,p}(\partial \A d)$. Since $v\geq 0$ inside $\A d$, we can check that $\frac{\partial v}{\partial \nu}\leq 0$ a.e. on $\partial\A d$, where $\nu$ is the outer  normal to $\partial\A d$. Therefore, $v$ weakly satisfies
  \[  
        -\Delta v-K(x)v+ v^{q-1} \leq 0, \quad 0\leq v<\delta,  \quad \text{in\ }\RN \setminus \bigcup_{i=1}^k\overline {B(x_i,\rho )}.
\]
Then, by Lemma \ref{R0},
\begin{equation}\label{eq28}
\supp v \subset \bigcup_{i=1}^k B(x_i, R_0)=\A { \sigma_0^\frac{2-q}{2}}.
\end{equation}
 \end{remark}
  We use notation $a\vee b=\max\{a,b\}$ and $a\wedge b=\min\{a,b\}$.
  Since $a\vee b=a$ if and only if $a\wedge b=b$,
  for $u,v\in X_q$, it holds
  \[
I (u_1\vee u_2)  =I (u_1)+I  (u_2) -I(u_1\wedge u_2).
\]
It is clear that if $u\in S_{\boldsymbol{y}}$, $v\in S_{\boldsymbol{z}}$ with 
$\boldsymbol{y}\in \mathcal K_{k_1}$, 
$\boldsymbol{z}\in \mathcal K_{k_2}$, and
$\dist(\boldsymbol{y},\boldsymbol{z})\geq 2R^*-\sigma_0$,
then $\boldsymbol{x}=\boldsymbol{y}\cup\boldsymbol{z}\in \mathcal K_{k_1+k_2}$, and $u\vee v\in S_{\boldsymbol{x}}$.
The following result holds by \eqref{eq28}:
\begin{corollary}\label{cor413}
Let \( k_1, k_2 \geq 1 \) and \( d \geq 0 \).  
Assume that \( \boldsymbol{x} \in \mathcal{K}_{k_1} \) and \( \boldsymbol{y} \in \mathcal{K}_{k_2} \) satisfy  
\( \boldsymbol{x} \cup \boldsymbol{y} \in \mathcal{K}_{k_1 + k_2} \).  
Then
\[
\mu_d(\boldsymbol{x} \cup \boldsymbol{y}) \leq \mu_d(\boldsymbol{x}) + \mu_d(\boldsymbol{y}).
\]
If, in addition,
\[
\min\bigl\{ |x - y| \;\big|\; x \in \boldsymbol{x},\, y \in \boldsymbol{y} \bigr\} \geq 2R_0,
\]
then equality holds:
\[
\mu_d(\boldsymbol{x} \cup \boldsymbol{y}) = \mu_d(\boldsymbol{x}) + \mu_d(\boldsymbol{y}).
\]
\end{corollary}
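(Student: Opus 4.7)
The plan is to verify both assertions by constructing explicit admissible competitors. For the inequality $\mu_d(\boldsymbol x\cup\boldsymbol y)\leq \mu_d(\boldsymbol x)+\mu_d(\boldsymbol y)$, I would glue minimizers of the two clusters via a pointwise maximum; for the reverse inequality under the separation hypothesis, I would restrict a minimizer on the union to each cluster and exploit the locality of all the defining conditions.

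\textbf{Step 1: the inequality.} By Proposition~\ref{pro2.5} choose $u\in M_{\boldsymbol x,d}$ and $v\in M_{\boldsymbol y,d}$, and set $\bar u=u\vee v\in H_0^1(A(\boldsymbol x\cup\boldsymbol y,d))$. The hypothesis $\boldsymbol x\cup\boldsymbol y\in\mathcal K_{k_1+k_2}$ forces all pairwise distances in the union to exceed $2R^*-\sigma_0>2\rho$, so the $\rho$-balls around these points are pairwise disjoint. Consequently
\[
\bar u^\delta=u^\delta\vee v^\delta=u^\delta+v^\delta
\]
as a disjoint sum, and each single-bump component of $\bar u$ coincides with the corresponding component of $u$ or $v$; the local barycenter constraints $\beta_z(\bar u)=0$ are thus inherited. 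For the Nehari identity $I'(\bar u)(\bar u)_i^\delta=0$ at $x_i\in\boldsymbol x$, observe that on $B(x_i,\rho)$ we have $v\leq\delta$ (since $v^\delta$ lives in the disjoint $\bigcup_j B(y_j,\rho)$), whereas $u\geq\delta$ on $\supp(u)_i^\delta$; hence $\bar u=u$ on a neighborhood of $\supp(u)_i^\delta$, and the integrals in $I'(\bar u)(\bar u)_i^\delta$ collapse to those in $I'(u)(u)_i^\delta=0$. The symmetric argument handles $y$-bumps, so $\bar u\in S_{\boldsymbol x\cup\boldsymbol y,d}$. Using the pointwise lattice identity $I(u\vee v)+I(u\wedge v)=I(u)+I(v)$ recorded above \eqref{eq8}, noting that $u\wedge v\leq\delta$ everywhere (so $u\wedge v$ is purely submerged) and $I((u\wedge v)_\delta)\geq 0$ by the choice of $\delta$ in \eqref{eq9}, we conclude $\mu_d(\boldsymbol x\cup\boldsymbol y)\leq I(\bar u)\leq I(u)+I(v)=\mu_d(\boldsymbol x)+\mu_d(\boldsymbol y)$.

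\textbf{Step 2: the equality.} Assume $\min\{|x-y|:x\in\boldsymbol x,y\in\boldsymbol y\}\geq 2R_0$. Let $w\in M_{\boldsymbol x\cup\boldsymbol y,d}$, which exists by Proposition~\ref{pro2.5}. By Remark~\ref{rek4.2}(ii), $\supp w\subset\bigcup_{z\in\boldsymbol x\cup\boldsymbol y}\overline{B(z,R_0)}$, and the separation hypothesis makes the $R_0$-balls around $\boldsymbol x$ disjoint (up to possibly a null set) from those around $\boldsymbol y$. Let $w_1$ (resp.\ $w_2$) be the restriction of $w$ to the $\boldsymbol x$-cluster (resp.\ $\boldsymbol y$-cluster), extended by zero. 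Then $w=w_1+w_2$ with supports separated by a null set, and $w_j$ belongs to $H_0^1$ of its respective domain because $w$ vanishes on $\partial A(\boldsymbol x\cup\boldsymbol y,d)$ and the portion of this boundary touching each cluster is exactly $\partial A(\boldsymbol x,d)$ or $\partial A(\boldsymbol y,d)$. Each condition defining $S_{\cdot,d}$ is determined by the values of $w$ in its own cluster, so $w_1\in S_{\boldsymbol x,d}$ and $w_2\in S_{\boldsymbol y,d}$. The disjoint-support energy decomposition gives
\[
\mu_d(\boldsymbol x\cup\boldsymbol y)=I(w)=I(w_1)+I(w_2)\geq \mu_d(\boldsymbol x)+\mu_d(\boldsymbol y),
\]
which combined with Step 1 yields the claimed equality.

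\textbf{Main obstacle.} The only delicate point is the Nehari identity for $\bar u=u\vee v$, because a pointwise maximum is not a smooth sum and its gradient is generally discontinuous across $\{u=v\}$. The resolution relies on the disjointness of the $\rho$-balls (from $\boldsymbol x\cup\boldsymbol y\in\mathcal K_{k_1+k_2}$) together with the a priori fact that $u\geq\delta$ on $\supp(u)_i^\delta$ while $v\leq\delta$ throughout $B(x_i,\rho)$, which jointly force $\bar u\equiv u$ on a neighborhood of $\supp(u)_i^\delta$. All remaining ingredients---existence of minimizers, support localization from Remark~\ref{rek4.2}(ii), nonnegativity of $I$ on purely submerged functions via the choice of $\delta$ in \eqref{eq9}, and the lattice identity for $I$---are already in place.
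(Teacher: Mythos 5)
Your proof is correct and takes essentially the same route as the paper: Step~1 glues minimizers by $u\vee v$ and applies the lattice identity $I(u\vee v)=I(u)+I(v)-I(u\wedge v)$ with $I(u\wedge v)\geq 0$; Step~2 uses the support localization \eqref{eq28} (Remark~\ref{rek4.2}(ii)) to split a minimizer on the union into two pieces with disjoint supports. The only difference is that you spell out in detail why $u\vee v\in S_{\boldsymbol x\cup\boldsymbol y,d}$ (barycenter and Nehari constraints are inherited because $v\leq\delta$ on the $\boldsymbol x$-cluster and vice versa), whereas the paper relies on the remark stated just above the corollary asserting this membership without proof.
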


\begin{proof}
Let \( v_1 \in M_{\boldsymbol{x}, d} \) and \( v_2 \in M_{\boldsymbol{y}, d} \).  
Then \( v_1 \vee v_2 \in S_{\boldsymbol{x} \cup \boldsymbol{y}, d} \), and hence
\[
\mu_d(\boldsymbol{x} \cup \boldsymbol{y}) 
\leq I(v_1 \vee v_2)
= I(v_1) + I(v_2) - I(v_1 \wedge v_2)
\leq \mu_d(\boldsymbol{x}) + \mu_d(\boldsymbol{y}),
\]
where we used that \( v_1 \wedge v_2 \leq \delta \), which implies \( I(v_1 \wedge v_2) \geq 0 \).

Now assume that  
\( \min\{ |x - y| \mid x \in \boldsymbol{x},\, y \in \boldsymbol{y} \} \geq 2R_0 \).  
Let \( u \in M_{\boldsymbol{x} \cup \boldsymbol{y}, d} \).  
By \eqref{eq28}, we can write \( u = u_1 + u_2 \) with  
\( u_1 \in S_{\boldsymbol{x}, d} \), \( u_2 \in S_{\boldsymbol{y}, d} \), and  
\( \operatorname{supp} u_1 \cap \operatorname{supp} u_2 = \emptyset \).  
Therefore,
\[
\mu_d(\boldsymbol{x} \cup \boldsymbol{y}) = I(u) = I(u_1) + I(u_2) \geq \mu_d(\boldsymbol{x}) + \mu_d(\boldsymbol{y}).
\]
Combining this with the previous inequality,  we arrive at the desired equality.
\end{proof}
\begin{lemma}\label{lem mu}
 $\mu_d(\boldsymbol{x})$ is continuous  with respect to $d$ and $\boldsymbol{x}$.
\end{lemma}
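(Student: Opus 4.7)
The plan is to establish joint continuity of $\mu_d(\boldsymbol{x})$ at an arbitrary point $(d,\boldsymbol{x})\in[0,+\infty)\times\mathcal K_k$ along sequences $(d_n,\boldsymbol{x}_n)\to(d,\boldsymbol{x})$; since $\dist(\boldsymbol{x}_n,\boldsymbol{x})\to 0$ allows a relabelling with $x_j^n\to x_j$, I treat ordered tuples throughout and prove lower and upper semi-continuity separately.

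For lower semi-continuity, pick minimizers $u_n\in M_{\boldsymbol{x}_n,d_n}$, which exist by Proposition~\ref{pro2.5}. Lemma~\ref{lem2.5} gives a uniform bound $\|u_n\|\leq Ck^{\theta'}$, so up to a subsequence $u_n\rightharpoonup v$ weakly in $X_q$ (and a.e., and strongly in $L^r_{\mathrm{loc}}$ for $r\in[1,2^*)$). The support bound in Remark~\ref{rek4.2}(ii) yields $\supp u_n\subset\bigcup_j B(x_j^n,R_0)\cap A(\boldsymbol{x}_n,d_n)$, forcing $v$ to vanish a.e. outside $\overline{A(\boldsymbol{x},d)}$; since $A(\boldsymbol{x},d)$ has Lipschitz boundary, this identifies $v\in H_0^1(A(\boldsymbol{x},d))$, and the weak-trace continuity together with the disjointness of $\{B(x_j,\rho)\}$ shows that $v$ emerges around $\boldsymbol{x}$ with $v_j^\delta\in H_0^1(B(x_j,\rho))$. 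Local $L^2$-convergence together with $x_j^n\to x_j$ forces $\beta_j(v)=0$, while weak lower semi-continuity of the quadratic form combined with Lemma~\ref{lem2.3} preserves $\int(|\nabla v_j^\delta|^2-K(v_j^\delta)^2)<0$. Corollary~\ref{cor2.2} then delivers Nehari scalars $t_j>0$ with $\bar v:=v_\delta+\sum_j t_j v_j^\delta\in S_{\boldsymbol{x},d}$. Applying Lemma~\ref{lem2.4} to $(u_n)_\delta+\sum_j t_j (u_n)_j^\delta$ and using weak lower semi-continuity of $I$ yields
\[
\mu_d(\boldsymbol{x})\leq I(\bar v)\leq\liminf_n I\bigl((u_n)_\delta+\textstyle\sum_j t_j (u_n)_j^\delta\bigr)\leq\liminf_n I(u_n)=\liminf_n\mu_{d_n}(\boldsymbol{x}_n).
\]

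For upper semi-continuity, fix $v\in M_{\boldsymbol{x},d}$, set $a_j^n:=x_j^n-x_j\to 0$, and define translated emerging profiles $w_j^n(x):=v_j^\delta(x-a_j^n)$, supported in the pairwise disjoint balls $B(x_j^n,\rho)$ for $n$ large. On each $B(x_j^n,\rho)$ set the provisional $\tilde v_n(x):=v(x-a_j^n)$ and extend $\tilde v_n$ to $A(\boldsymbol{x}_n,d_n)\setminus\bigcup_j B(x_j^n,\rho)$ as the unique solution granted by Remark~\ref{rek4.2}(i) (with the prescribed boundary data on $\partial B(x_j^n,\rho)$ and zero on $\partial A(\boldsymbol{x}_n,d_n)$). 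A change of variables gives $\beta_j(\tilde v_n)=\beta_j(v)=0$. Since $K\in L^p_{\mathrm{loc}}$ with $p>N/2$, translation continuity in $L^p$ implies
\[
\int(|\nabla w_j^n|^2-K(x)(w_j^n)^2)\,\longrightarrow\,\int(|\nabla v_j^\delta|^2-K(x)(v_j^\delta)^2)<0,
\]
so Corollary~\ref{cor2.2} supplies scaling factors $t_j^n\to 1$ for which $\bar v_n:=(\tilde v_n)_\delta+\sum_j t_j^n w_j^n\in S_{\boldsymbol{x}_n,d_n}$.

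The main technical hurdle is verifying $I(\bar v_n)\to I(v)$, because the submerged part $(\tilde v_n)_\delta$ solves a PDE on the variable domain $A(\boldsymbol{x}_n,d_n)\setminus\bigcup_j B(x_j^n,\rho)$ with translated boundary data. I would combine the pointwise bound $0\leq(\tilde v_n)_\delta<\delta$ from Remark~\ref{rek4.2}(i) with elliptic $W^{2,p}$ estimates to extract a subsequential limit which, by the uniqueness part of Remark~\ref{rek4.2}(i), must coincide with $v_\delta$ on $A(\boldsymbol{x},d)\setminus\bigcup_j B(x_j,\rho)$. Dominated convergence then yields the energy convergence $I(\bar v_n)\to I(v)$, hence $\limsup_n\mu_{d_n}(\boldsymbol{x}_n)\leq I(v)=\mu_d(\boldsymbol{x})$. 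Combining the two semi-continuities completes the proof.
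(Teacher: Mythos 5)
Your overall structure mirrors the paper's proof closely: both directions of semi-continuity, minimizers from Proposition~\ref{pro2.5} and Lemma~\ref{lem2.4} for the lower bound, and a translated-and-resolved competitor built via Remark~\ref{rek4.2} for the upper bound. For lower semi-continuity your argument is sound (your explicit appeal to the support bound in Remark~\ref{rek4.2}(ii) to place the weak limit in $H_0^1(A(\boldsymbol{x},d))$ is a fair addition that the paper's proof takes for granted). The paper uses a core radius $R\in(\rho,\rho+\sigma_0)$ rather than your $R=\rho$, but that choice is cosmetic.

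The gap is in the upper semi-continuity direction, precisely at the point you label the ``main technical hurdle.'' The claim that ``dominated convergence then yields the energy convergence $I(\bar v_n)\to I(v)$'' does not hold for the Dirichlet term: dominated convergence controls $\int K|\bar v_n|^2$ and $\int |\bar v_n|^q$ once you have a.e.\ convergence and a uniform support, but it gives no information about $\int|\nabla\bar v_n|^2$. Local $W^{2,p}$ estimates plus Rellich do yield strong $H^1$-convergence on any \emph{fixed} smooth subdomain compactly contained in $A(\boldsymbol{x},d)\setminus\bigcup_j \overline{B(x_j,\rho)}$, but they do not control the gradient energy in the collar near the \emph{moving} outer boundary $\partial A(\boldsymbol{x}_n,d_n)$, where the zero-extension of $(\tilde v_n)_\delta$ has a normal-derivative jump and no $W^{2,p}$ bound survives. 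The paper's proof spends most of its length on exactly this: after getting weak convergence, it picks a smooth $\Omega_\e$ with $\bigcup_j B(x_j,\rho+\sigma_0)\subset\Omega_\e\Subset A(\boldsymbol{x},d)$ and an approximation $u_\e$ with $\|u_\e-u\|\leq\e$, and then \emph{tests the equation} against a cut-off function of the form $\psi(u_n-u_\e)$ to force
\[
\int_{A(\boldsymbol{x}_n,d_n)\setminus\Omega_\e}|\nabla u_n|^2\leq C\e,
\]
which is what kills the boundary-collar gradient energy uniformly in $n$. Without some such quantitative boundary estimate, ``subsequential limit exists and equals $v_\delta$'' does not upgrade to $\int|\nabla(\tilde v_n)_\delta|^2\to\int|\nabla v_\delta|^2$, and upper semi-continuity is not actually proved. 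To close the gap, you should either reproduce a cut-off test-function estimate of this type, or argue via the variational characterization in Remark~\ref{rek4.2}(i) (the submerged part is the unique minimizer of a convex energy with prescribed core data) and compare energies against an explicit competitor near the boundary; either route requires a genuine additional estimate beyond what you have written.
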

\begin{proof}
Consider sequences $d_n\to d$ and $\boldsymbol{x_n}\to \boldsymbol{x}$ in $\mathcal K_k$, with
$\boldsymbol{x_n}=\{x_i^n\}_{i=1}^k$, $\boldsymbol{x}=\{x_i \}_{i=1}^k$. 
Let $u\in M_{\boldsymbol{x}, d}$. 
Fix  $R\in(\rho, \rho+\sigma_0)$.
By Remark \ref{rek4.2}, for each 
$n$, there exists a unique function 
$u_n$ satisfying
\begin{equation} \label{un}
  \begin{cases}
  -\Delta u_n-K(x)u_n+ u_n^{q-1} =0, \quad   0\leq u_n<\delta \ \text{ in  } A(\boldsymbol{x_n}, d_n) \setminus \bigcup_{i=1}^k\overline {B(x_i,R)},\\
u_n \in H_0^1(A(\boldsymbol{x_n}, d_n)), \quad u_n= u(\cdot-x_i^n+x_i)\  \text{ in } \overline{B(x_i,R)},\quad i=1,\cdots,k.
\end{cases}
\end{equation}
Let $\varphi_i$ be a smooth cut-off function such that $\varphi_i=1$ in $B(x_i,R)$ and
 $\varphi_i=0$ outside $B(x_i,\rho+\sigma_0)$. We define $\tilde u_n=\sum_{i=1}^k\varphi_i u(\cdot-x_i^n+x_i)$. Then, for large $n$, it holds $u_n-\tilde u_n\in H_0^1(A(\boldsymbol{x_n}, d_n) \setminus \bigcup_{i=1}^k\overline {B(x_i,R)})$, and $u_n, \tilde u_n\in[0,\delta)$ in $A(\boldsymbol{x_n}, d_n) \setminus \bigcup_{i=1}^k\overline {B(x_i,R)}$. Therefore,
testing equation \eqref{un} with $u_n-\tilde u_n$, we obtain
\[
\begin{aligned}
\int_{A(\boldsymbol{x_n}, d_n) \setminus \bigcup_{i=1}^k\overline {B(x_i,R)}}
\nabla u_n(\nabla u_n-\nabla \tilde u_n)\leq C.
\end{aligned}
\]
This, together with the Young inequality and the fact $\|\nabla \tilde u_n\|_2\leq C$, implies that 
$$\int_{A(\boldsymbol{x_n}, d_n) \setminus \bigcup_{i=1}^k\overline {B(x_i,R)}}
|\nabla u_n|^2\leq C.$$
By this, $u_n=u(\cdot-x_i^n+x_i)$ in $B(x_i,R)$, $u_n\in[0,\delta)$ in $A(\boldsymbol{x_n}, d_n) \setminus \bigcup_{i=1}^k\overline {B(x_i,R)}$, and $u_n\in H_0^1(A(\boldsymbol{x_n}, d_n))$, we
 get the boundedness of
$\{u_n\}$ in $X_q$. So up to a subsequence, we can assume that
$u_n\rightharpoonup v$ weakly in  $X_q$.
Since $u_n\in H_0^1(A(\boldsymbol{x_n}, d_n))$, we have $v\in H_0^1(\A d)$.
Testing the equation \eqref{un} with any   $h\in C_0^\infty(A(\boldsymbol{x}, d) \setminus \bigcup_{i=1}^k\overline {B(x_i,R)})$ and passing to the limit as $n\to\infty$, we find that $v$ satisfies
\[  
  \begin{cases}
  -\Delta v-K(x)v+ v^{q-1} =0, \quad   0\leq v<\delta   \ \text{ in  } A(\boldsymbol{x}, d) \setminus \bigcup_{i=1}^k\overline {B(x_i,R)},\\
v \in H_0^1(A(\boldsymbol{x}, d)), \quad v= u \ \text{ in } \overline{B(x_i,R)},\quad i=1,\cdots,k.
\end{cases}
\]
Since $u\in M_{\boldsymbol{x}, d}$, it solves this equation. According to Remark \ref{rek4.2}, it is
the unique solution, and hence $v=u$.
Next, we prove the strong convergence $u_n\to u$ in $X_q$.

Set $\delta_0=\frac12(R-\rho)$.
By Lemma \ref{lem2.10} and the elliptic regularity theory, we have
\begin{equation}\label{eq4.14}
u\in W^{2,p}(\A d\setminus \cup_{j=1}^k \overline{B(x_j, R-\delta_0)}).
\end{equation}
For any $\e>0$,  we choose $u_\e\in H_0^1(\Omega_\e)$ with $\Omega_\e$ an open smooth set  satisfying
\[\bigcup_{j=1}^{k} B(x_j, \rho+\sigma_0)\subset\Omega_\e \subset \overline\Omega_\e \subset\A d,\ \
\text{and } \|u_\e-u\|\leq \e.\]
Then for large $n$, we have $\overline\Omega_\e\subset A(\boldsymbol{x}_n, d_n)$.
By this, the smoothness of $\partial B(x_i,R)$ and \eqref{eq4.14}, we can use the $L^p$ estimate in \eqref{un}, and obtain that $\|u_n\|_{W^{2,p}(\Omega_\e \setminus \cup_{j=1}^{k}\overline{ B(x_j, R)})}$ is bounded. Since $p>\frac{N}{2}\geq \frac{2N}{N+2}$, by  the  Rellich theorem,  and the fact $u_n\rightharpoonup v$ weakly in  $X_q$, we have 
$u_n\to u$ in $H^1(\Omega_\e\setminus \cup_{j=1}^{k} \overline{B(x_j, R)} )$. 
Combining this with $u_n=u(\cdot-x_i^n+x_i)\to u$ in $H^1(\cup_{j=1}^k B(x_j,R))$, we deduce that
$u_n\to u$ in $H^1(\Omega_\e)$. 
Since 
$u_n$ and $u$ 
are supported in a compact set indendent of $n$, we have 
$u_n\to u$  strongly in $L^r(\R^N)$, $r\in[1,2^*)$.
Consequently, for large $n$,
\begin{equation}\label{eq4.15}
\begin{aligned}
\|u_n-u_\e\|_{L^r(\R^N)}+\|u_n-u_\e\|_{H^1(\Omega_\e)}= \|u-u_\e\|_{L^r(\R^N)}+\|u-u_\e\|_{H^1(\Omega_\e)}+o_n(1)\leq C_r\e,
\end{aligned}
\end{equation}
where $C_r$ is a constant depending on $r\in[1,2^*)$.

Now let 
$\psi$ be a smooth nonnegative cut-off function such that 
$\psi=0$ in $\cup_{j=1}^{k} B(x_j, R)$ and 
$\psi=1$ in $\R^N\setminus\cup_{j=1}^{k} B(x_j, \rho+\sigma_0)$.
Then testing \eqref{un} with $\psi (u_n-u_\e)\in H_0^1(A(\boldsymbol{x}_n, d_n)\setminus \cup_{j=1}^{k} B(x_j, R))$, we obtain by \eqref{eq4.15}
\begin{align*}
\int_{A(\boldsymbol{x}_n, d_n)} \nabla u_n\nabla(\psi(u_n-u_\e))=&
\int_{A(\boldsymbol{x}_n, d_n)} \psi(Ku_n-u_n^{q-1})(u_n-u_\e)\\
\leq& C\|K\|_{p,loc}\|u_n-u_\e\|_{L^\frac{p}{p-1}}
+C\|u_n-u_\e\|_{L^q}\leq C\e.
\end{align*}
By this and \eqref{eq4.15}, we have that, for large $n$,
\[
\begin{aligned}
\int_{A(\boldsymbol{x}_n, d_n)\setminus\Omega_\e}|\nabla u_n|^2
&=\int_{A(\boldsymbol{x}_n, d_n)\setminus\Omega_\e} \nabla u_n\nabla(\psi(u_n-u_\e))\\
&= \int_{A(\boldsymbol{x}_n, d_n)} \nabla u_n\nabla(\psi(u_n-u_\e))-
 \int_{\Omega_\e} \nabla u_n\nabla(\psi(u_n-u_\e))\\
&\leq C\e,
\end{aligned}
\]
and hence
\[
\|\nabla (u_n-u_\e)\|^2_{L^2(A(\boldsymbol{x}_n, d_n)\setminus \Omega_\e)}
=\|\nabla u_n \|^2_{L^2(A(\boldsymbol{x}_n, d_n)\setminus \Omega_\e)} \leq C\e.
\]
According to this and \eqref{eq4.15}, we obtain
\[
\limsup_{n\to+\infty}\|u_n-u\|\leq C\limsup_{n\to+\infty}\|u_n-u_\e\|_{H^1(A(\boldsymbol{x}_n, d_n))} +
\limsup_{n\to+\infty}\|u_\e-u\|
\leq C\e.
\]
Since $\e$ is arbitrary and $C$ is independent of $\e$, we have proved that, up to a subsequence, $u_n\to u$ in $X_q$.

Choose $t_{n,i}> 0$ for $i=1,\cdots,k$ such that
$$\tilde u_n= u_n+\sum_{i=1}^{k }t_{n,i} u_i^\delta(\cdot-x_i^n+x_i)\in S_{\boldsymbol x_n,d_n}.$$
Then by
$$\frac{\rd}{\rd t}J_\delta(tu_i^\delta(\cdot-x_i^{n}+x_i))|_{t=1+t_{n,i}}=0\ \ \text{and }\ x_i^{n}\to x_i\ \text{as }n\to\infty.$$
we can prove $|t_{n,i}|\leq C$. Up to a subsequence, $t_{n,i}\to t_i$ as $n\to\infty$. Therefore, $\frac{\rd}{\rd t}J_\delta(tu_i^\delta)|_{t=1+t_i}=0$, which implies $t_i=0$.
Thus, we have proved $t_{n,i}\to 0$ for $i=1,\cdots,k$, and hence
\[ \limsup_{n\to+\infty} \mu_{d_n}(\boldsymbol x_n)\leq   \limsup_{n\to+\infty} I (\tilde u_n)
=I ( u)=\mu_d(\boldsymbol x),
\]
which establishes upper semi-continuity.

To show $\mu_d(\boldsymbol{x})$ is lower semi-continuous, we set $u_n\in M_{\boldsymbol{x_n},d_n}$. By the same proof as that in Proposition \ref{pro2.5},
  \[ 
    \mu_{d_n}(\boldsymbol{x_n}) \le I((h_n)_\delta+ \sum_{i=1}^kt_{w_{x_i}}(w_{x_i^n})^\delta)\leq Ck,\ \ \text{where }h_n:=\max_{i\in\{1,\dots,k\}}\{w_{x_i^n}\}.
  \]
By Lemma \ref{lem2.5}, $\|u_n\|\leq Ck^{\theta'}$.
Assume that $u_n\rightharpoonup u_0$ in $X_q$. Then $u_0\in H_0^1(A(\boldsymbol{x}, d))$  emerges around the points $\boldsymbol{x}=\{x_j\}_{j=1}^k$ with $\beta_i(u_0)=0$, and by Lemma \ref{lem2.3},
\begin{equation*} 
\int_{\R^N}|\nabla (u_0)_i^\delta|^2-K(x)((u_0)_i^\delta)^2dx
\leq \liminf_{n\to\infty}
\int_{\R^N}|\nabla (u_n)_i^\delta|^2-K(x)((u_n)_i^\delta)^2dx\leq -c.
\end{equation*}
So there exist
$t_i>0$, such that $\overline u_0:=(u_0)_\delta+\sum_{i=1}^k t_i (u_0)_i^\delta\in S_{\boldsymbol x,d}$. Then we have
$$\mu_d(\boldsymbol x)\leq I(\overline u_0)
= I((u_0)_\delta+\sum_{i=1}^k t_i (u_0)_i^\delta)
\leq \liminf_{n\to\infty}
I((u_n)_\delta+\sum_{i=1}^k t_i (u_n)_i^\delta)
\leq \liminf_{n\to\infty}I(u_n)
=\liminf_{n\to\infty}\mu_{d_{n}}(\boldsymbol {x_{n}}).$$
\end{proof}

\begin{lemma}\label{lem lambda}
  For $k\in\N\setminus\{0\}$ and $ \boldsymbol x  \in\mathcal K_k $,   let $u\in S_{\boldsymbol x,d}$ be a minimizer such that $I(u)=\mu_d(\boldsymbol x)$. Then, for $i=1,\cdots,k$, there exist $\lambda_{i,d}\in\R^N$ such that
\begin{equation*} 
I'(u)h=\int_{B(x_i,\rho)}u^\delta(x)h(x)(\lambda_{i,d}\cdot(x-x_i))\rd x,\ \ \ \forall h\in C_0^\infty(B(x_i,\rho)).
\end{equation*}
\end{lemma}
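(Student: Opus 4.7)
The plan is to apply the Lagrange multiplier theorem at the constrained minimizer $u\in M_{\boldsymbol{x},d}$. Beyond the positivity and emergence conditions (which are preserved by any small $C_0^\infty$-perturbation supported in $B(x_i,\rho)$), the equality constraints defining $S_{\boldsymbol{x},d}$ are the Nehari identities $G_j(v):= I'(v) v_j^\delta = 0$ and the barycenter identities, which I reformulate as $\Phi_j(v):= \int_{B(x_j,\rho)}(x-x_j)(v_j^\delta(x))^2\,\rd x = 0$ (equivalent to $\beta_j(v) = 0$ since $\|u_j^\delta\|_2\neq 0$). For $h\in C_0^\infty(B(x_i,\rho))$, both $G_j$ and $\Phi_j$ with $j\neq i$ depend only on $v|_{B(x_j,\rho)}$ and are therefore unaffected by $u+th$; the Lagrange relation thus localizes to
\[
I'(u) h = \mu_i\, G_i'(u) h + \lambda_i\cdot\Phi_i'(u) h,\quad \forall h\in C_0^\infty(B(x_i,\rho)),
\]
for some $\mu_i\in\R$ and $\lambda_i\in\R^N$.

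The key identifications at $u$ are $\Phi_i'(u) h = 2\int_{B(x_i,\rho)}(x-x_i)\,u^\delta h\,\rd x$, obtained via the a.e.\ formula $(v_i^\delta)'(u) h = h\,\chi_{\{u>\delta\}\cap B(x_i,\rho)}$ (valid since the minimizer $u$ is continuous by Remark~\ref{rek4.2}, so $\{u=\delta\}$ has Lebesgue measure zero), and $G_i'(u) u_i^\delta = \mathcal I_{u,i}''(1) < 0$, where $\mathcal I_{u,i}(t) := I(u_\delta + \sum_{j\neq i} u_j^\delta + tu_i^\delta)$; the strict negativity follows from the computations in the proof of Lemma~\ref{lem2.4}. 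These yield constraint regularity at $u$: if $aG_i'(u) + b\cdot\Phi_i'(u) = 0$ in $H^{-1}(B(x_i,\rho))$, testing on $u_i^\delta$ and using $\Phi_i'(u) u_i^\delta = 2\beta_i(u)\|u_i^\delta\|_2^2 = 0$ forces $a=0$; the resulting identity $\int(b\cdot(x-x_i))u_i^\delta\xi\,\rd x = 0$ for all $\xi\in H_0^1(B(x_i,\rho))$ then yields $(b\cdot(x-x_i))u_i^\delta = 0$ a.e., so $b=0$ since $u_i^\delta$ has positive-measure support while $\{b\cdot(x-x_i)=0\}$ is a hyperplane. The Lagrange multiplier theorem then provides $\mu_i$ and $\lambda_i$ as above.

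To extract the stated form, I would extend the identity by density to $h = u_i^\delta\in H_0^1(B(x_i,\rho))$: the LHS equals $G_i(u) = 0$ and the barycenter term vanishes, so $\mu_i\,\mathcal I_{u,i}''(1) = 0$ and thus $\mu_i = 0$. Using $u^\delta = u_i^\delta$ on $\supp(h)\subset B(x_i,\rho)$ (the remaining bumps being supported in disjoint $\rho$-balls), this gives $I'(u) h = 2\lambda_i\cdot\int(x-x_i)u^\delta h\,\rd x$, which is the claim with $\lambda_{i,d}:=2\lambda_i$. The chief technical hurdle is justifying the Lagrange formalism despite the non-smoothness of $v\mapsto (v-\delta)^+$; this is resolved by the continuity and $W^{2,p}$-regularity of the minimizer, which renders the Fr\'echet derivatives of $G_i$ and $\Phi_i$ on $H_0^1(B(x_i,\rho))$ well-defined via the a.e.\ formula above. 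A more concrete alternative is to apply the implicit function theorem to the family $(t,s,\epsilon)\mapsto u_\delta + \sum_{j\neq i}u_j^\delta + s\,u_i^\delta(\cdot-\epsilon) + th$ (translations being admissible because $\supp u_i^\delta$ lies strictly inside $B(x_i,\rho)$ by the strong maximum principle), whose required invertibility reduces to the same non-degeneracies $G_i'(u)u_i^\delta\neq 0$ and $\Phi_i'(u)\nabla u_i^\delta = -\|u_i^\delta\|_2^2\,\mathrm{Id}_{\R^N}$ computed above.
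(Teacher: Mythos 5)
Your plan applies an abstract Lagrange multiplier theorem, which is a genuinely different route from the paper's proof, but the technical hurdle you flag---the non\-smoothness of $v\mapsto(v-\delta)^+$---is not closed, so as written there is a gap. The constraint functionals $G_i(v)=I'(v)v_i^\delta$ and $\Phi_i(v)$ involve $v\mapsto v_i^\delta=(v-\delta)^+$, which is Lipschitz on $H^1$ but not Fr\'echet differentiable (let alone $C^1$) in general; the Lagrange multiplier theorem requires $C^1$ regularity of the constraints in a neighborhood of $u$ together with a surjectivity condition, not merely an a.e.\ G\^ateaux derivative at the single point $u$. Your justification that $\{u=\delta\}$ has Lebesgue measure zero ``since $u$ is continuous'' is also not sound: continuity, or even $W^{2,p}$ regularity, does not preclude a positive-measure level set, and you supply no Sard-type or unique-continuation argument. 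The alternative family $u_\delta+\sum_{j\neq i}u_j^\delta+su_i^\delta(\cdot-\epsilon)+th$ runs into the same nonlinearity from a different side: $su_i^\delta(\cdot-\epsilon)+th$ is \emph{not} the emerging part of the resulting function (the decomposition $v=v_\delta+v^\delta$ is not linear), so the parameters $(s,\epsilon)$ do not control the actual constraints $G_i(\cdot)=0$, $\beta_i(\cdot)=0$ in the way your implicit-function reduction presupposes.

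The paper sidesteps all of this with a deformation argument that needs only continuity. It assumes by contradiction that the Lagrange relation fails for some $j$, builds a locally Lipschitz pseudogradient field $\mathcal V$ on the constraint set $E$ (boundary values on $\partial B(x_j,\rho)$ and $\beta_j=0$ held fixed), runs the associated flow $\eta$ for unit time on the path $\gamma(t)=u+tu_j^\delta$, truncates to the positive part $\gamma_2(t)=\eta(1,\gamma(t))^+$ (so that $(\gamma_2(t))^\delta$ is unchanged while $I$ can only decrease, by Lemma~\ref{belowbound}), and then applies the intermediate value theorem to the continuous map $t\mapsto I'(\gamma_2(t))(\gamma_2(t))_j^\delta$, whose signs at $t=\pm t_0$ are pinned by the unique-maximum structure of Lemma~\ref{lem2.1}. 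This yields $t_1$ with $\gamma_2(t_1)\in S_{\boldsymbol{x},d}$ and $I(\gamma_2(t_1))<I(u)$, contradicting minimality. Unless you can independently establish the smoothness of the constraint functionals near $u$---which is exactly the nontrivial piece you defer---the deformation route is the one that actually goes through.
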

\begin{proof}
Assume to the contrary that for some $j\in\{1,\cdots,k\}$ the lemma does not hold. That is for any $\lambda\in\R^N$, there is $h\in C_0^\infty(B(x_j,\rho))$
such that
\begin{equation}\label{eq31}
I'(u)h\neq\int_{B(x_j,\rho)}u^\delta(x)h(x)(\lambda\cdot(x-x_j))\rd x.
\end{equation}
Define $E:=\{v\in H^1(B(x_j,\rho))\ |\ v=u\ \text{on }\partial B(x_j,\rho), \  \beta_j(v)=0\}$.  By \eqref{eq31}, $I'|_{E}(u)\neq 0.$
Then there is $r\in (0,r_1)$ such that $I'|_{E}(v)\neq 0$ for
$v\in E_r:=\{v\in E\ |\ \|v-u\|_{H^1(B(x_j,\rho))}\leq r\}$, where $r_1$ is given in Lemma \ref{belowbound}. According to Lemma \ref{lem2.3}, making
$r$ smaller if necessary, we can assume that
$\int_{B(x_j,\rho)}|\nabla v^\delta|^2-K(x)(v^\delta)^2dx<0$ for $v\in E_r$.

There is  a locally Lipschitz pseudogradient field $\mathcal V:E\to H_0^1(B(x_j,\rho))$ such that $\mathcal V(v)\in T_{v}E:=\{h\in H_0^1(B(x_j,\rho))\ |\ \beta_j'(v)h=0\}$, $I'(v) \mathcal V(v)\geq \|I|_E'(v)\|$, and $\|\mathcal{V}(v)\|_{H^1} \leq 2$ for $v\in E$.
Let $\chi$ be a nonnegative, continuous   cut-off function that equals $1$ in $E_{\frac r4}$ and $0$ outside of $ E_{\frac r2}$.
Consider the following initial value problem $\eta: [0,+\infty)\times E\to E$:
$$
\begin{cases}
\frac{\rd}{\rd s}\eta(s,v)=-\chi(\eta)\mathcal V(\eta);\\
\eta(0,v)=v.
\end{cases}
$$
Set $\gamma(t)=u+tu_j^\delta\in E$. Then for $t_0= \frac 34\cdot r/\| u_j^\delta\|_{H^1(B(x_j,\rho))}$, it holds $\gamma(t)\in E_{r}$ for $|t|\leq t_0$ and $\gamma(\pm t_0)\in E_{r}\setminus E_{\frac r2}$.
By lemma \ref{lem2.3}, there is a positive constant $c>0$ such that $\|u_j^\delta\|\geq c$. So making $r$ smaller if necessary, we have $t_0\in(0,\frac12)$. For $t\in[-t_0,t_0]$, we
 define
 $$\gamma_1(t)(x)=
 \begin{cases}
 \eta(1,\gamma(t))(x),\ \ &x\in B(x_j,\rho),\\
 u(x),\  \ &x\notin B(x_j,\rho),
 \end{cases}\quad 
 \gamma_2(t)=\gamma_1(t)^+.
 $$
Then we have $(\gamma_1(t))^\delta =(\gamma_2(t))^\delta$ and
\begin{enumerate}
    \item  $\gamma_2(t)=\gamma_1(t)=\gamma(t)$ for $|t|= t_0$ in $B(x_j,\rho)$.
    \item  $I(\gamma_2(t))\leq I(\gamma_1(t))<I(u)$ for  $|t|\leq t_0$. This is because of $\|(\gamma_1 (t))^-\|_{2}\leq r<r_1$ and Lemma \ref{belowbound}.
      \item $ \beta_j(\gamma_2(t))= \beta_j(\gamma_1(t))=0$ for $|t|\leq t_0$, which follows from the definition of $\beta_j$.
      \item $\int_{B(x_j,\rho)}|\nabla \gamma_2(t)^\delta|^2-K(x)(\gamma_2(t)^\delta)^2dx<0$ since $\eta(1,\gamma(t))\in E_r$ for 
      $|t|\leq t_0$.
  \end{enumerate}
By (i) and Lemma \ref{lem2.1}, $I'(\gamma_2(t))(\gamma_2(t))_j^\delta<0$ for $t=t_0$, 
and $I'(\gamma_2(t))(\gamma_2(t))_j^\delta>0$ for $t=-t_0$. Hence there is $t_1\in(-t_0,t_0)$ such that 
$I'(\gamma_2(t_1))(\gamma_2(t_1))_j^\delta=0$. This means that
$\gamma_2(t_1)\in S_{\boldsymbol{x},d}$ and consequently
$I(\gamma_2(t_1))\geq \mu_d(\boldsymbol x)= I(u)$, which contradicts to (ii).
\end{proof}

\begin{lemma}\label{lem414}
For $d\geq 0$, there hold  $\mu_{1,d}^\infty=\mu_1^\infty=m_0:=I^\infty(w)$, and $M_{y,d}^\infty=M_y^\infty=\{w_y\ |\ y\in \R^N\}$.
\end{lemma}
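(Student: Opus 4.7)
Fix $y\in\R^N$ and $d\ge 0$. The plan is to (i) produce $w_y$ as an explicit admissible competitor in $S_{\{y\},d}^\infty$ and $S_{\{y\}}^\infty$, giving the upper bound $\mu^\infty(\{y\}),\mu_d^\infty(\{y\})\le m_0$; (ii) use Lemmas~\ref{lem2.10} and \ref{lem lambda} together with a Schwarz symmetrization to identify any minimizer in the bounded-domain case with $w_y$; and (iii) pass to the unrestricted set $S_{\{y\}}^\infty$ by a cutoff argument. Since $\supp w_y=\overline{B(y,R^*)}$ with $w_y|_{\partial B(y,R^*)}=0$ (Lemma~\ref{lemmaR*}), $w_y\in H_0^1(A(\{y\},d))$. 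Because $(I^\infty)'(w_y)=0$ classically, in particular $(I^\infty)'(w_y)(w_y)^\delta=0$; and $\beta_1(w_y)=0$ by radial symmetry, so $w_y\in S_{\{y\},d}^\infty\subset S_{\{y\}}^\infty$. Translation invariance gives $I^\infty(w_y)=m_0$, establishing the upper bound.

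For the bounded-domain lower bound, Proposition~\ref{pro2.5} provides a minimizer $u\in M_{\{y\},d}^\infty$. Lemma~\ref{lem2.10} and Remark~\ref{rek4.2} (both with $K\equiv 1$) yield $\supp u\subset\overline{B(y,R_0)}$ and $-\Delta u-u+u^{q-1}=0$ in $A(\{y\},d)\setminus\overline{B(y,\rho_0)}$ for some $\rho_0\le\rho$ with $\supp u^\delta\subset\overline{B(y,\rho_0)}$, while Lemma~\ref{lem lambda} supplies $\lambda\in\R^N$ with $-\Delta u-u+u^{q-1}=\lambda\cdot(x-y)u^\delta$ in $B(y,\rho)$; since $u^\delta\equiv 0$ on $B(y,\rho)\setminus\overline{B(y,\rho_0)}$, these agree on the overlap and patch into a single equation on $A(\{y\},d)$. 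Next I would Schwarz-symmetrize $u$ around $y$. Using the commutation identities $(u^*)_\delta=(u_\delta)^*$ and $(u^*)^\delta=(u^\delta)^*$, Polya--Szeg\"o applied separately to $u_\delta$ and $u^\delta$ gives $I^\infty((u^*)_\delta+t(u^*)^\delta)\le I^\infty(u_\delta+tu^\delta)$ for every $t\ge 0$. Choosing the maximizing $t^*>0$ from Lemma~\ref{lem2.1}, the function $\tilde u:=(u^*)_\delta+t^*(u^*)^\delta$ lies in $S_{\{y\},d}^\infty$ (its support is a ball of the same measure as $\supp u$, hence in $A(\{y\},d)$; $\beta_1(\tilde u)=0$ by radial symmetry; the Nehari condition holds by choice of $t^*$). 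Since $u\in S_{\{y\},d}^\infty$ implies $\max_{t\ge 0}I^\infty(u_\delta+tu^\delta)=I^\infty(u)$, we conclude $I^\infty(\tilde u)\le I^\infty(u)$, with equality since $u$ minimizes. Hence $\tilde u$ is a radial minimizer; its Lagrange-multiplier equation forces $\lambda\cdot(x-y)\tilde u^\delta$ to be rotation-invariant about $y$, so (using $\tilde u^\delta\not\equiv 0$) $\lambda=0$. Thus $\tilde u$ is a nonnegative radial $H_0^1(A(\{y\},d))$-solution of the autonomous equation; the $C^1$ boundary decay from Lemma~\ref{lemmaR*}(iii) and elliptic regularity let one extend $\tilde u$ by zero to a compactly supported $H^1(\R^N)$-solution with connected (ball) support centered at $y$, so Lemma~\ref{lemmaR*}(iv) identifies $\tilde u=w_y$. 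This proves $w_y\in M_{\{y\},d}^\infty$ and $\mu_d^\infty(\{y\})=m_0$; applying the same argument to any $u\in M_{\{y\},d}^\infty$, together with the equality case of Polya--Szeg\"o (handled via the $C^{1,\alpha}$ regularity of the minimizer), yields $M_{\{y\},d}^\infty=\{w_y\}$.

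For the unrestricted case, given $u\in S_{\{y\}}^\infty$, cut off by a smooth $\eta_R$ equal to $1$ on $B(y,R)$ and $0$ outside $B(y,R+1)$ with $R>\rho$. Since $\supp u^\delta\subset B(y,\rho)$ and $\eta_R\equiv 1$ there, a direct computation gives $(u\eta_R)^\delta=u^\delta$, $\beta_1(u\eta_R)=0$, and $(I^\infty)'(u\eta_R)(u\eta_R)^\delta=(I^\infty)'(u)u^\delta=0$, so $u\eta_R\in S_{\{y\},R+1-R^*}^\infty$. Dominated convergence (using $u\in X_q$) yields $I^\infty(u\eta_R)\to I^\infty(u)$, and combined with $I^\infty(u\eta_R)\ge\mu_{R+1-R^*}^\infty(\{y\})=m_0$, this gives $\mu^\infty(\{y\})\ge m_0$, hence $\mu^\infty(\{y\})=m_0$. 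The same local Lagrange-multiplier and symmetrization scheme applies directly to any $u\in M_{\{y\}}^\infty$ (the proofs of Lemmas~\ref{lem2.10} and \ref{lem lambda} only perturb $u$ inside $B(y,\rho)$, so they are valid verbatim) to give $M_{\{y\}}^\infty=\{w_y\}$. By translation invariance of $I^\infty$, $\mu_{1,d}^\infty=\mu_1^\infty=m_0$, and as $y$ varies over $\R^N$ the full family of minimizers is $\{w_y:y\in\R^N\}$.

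\textbf{Main obstacle.} The most delicate step is the radial reduction. Schwarz rearrangement neither preserves the Nehari-type constraint $(I^\infty)'(\cdot)(\cdot)^\delta=0$ nor commutes with the scaling $v_t=u_\delta+tu^\delta$ taken as a whole; one must split into the submerged and emerging pieces, apply Polya--Szeg\"o pointwise in $t\ge 0$, and only then restore admissibility via the maximizing scaling $t^*$ from Lemma~\ref{lem2.1}. A secondary subtlety is upgrading ``$w_y\in M_{\{y\},d}^\infty$'' to ``$M_{\{y\},d}^\infty=\{w_y\}$'', which requires the equality case of Polya--Szeg\"o in a setting where the minimizer need not be strictly decreasing; this is handled via the Brothers--Ziemer theorem combined with the $C^{1,\alpha}$ regularity of the minimizer provided by elliptic theory.
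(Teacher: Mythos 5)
Your proposal takes a genuinely different route from the paper. The paper's proof is much shorter and avoids Schwarz symmetrization altogether: for any $u\in S_0^\infty$, Lemma~\ref{lem2.1} gives a $t_0>0$ so that $u_\delta+t_0u^\delta\in\mathcal N$, and since $I^\infty(u)=\max_{t\ge 0}I^\infty(u_\delta+tu^\delta)\ge I^\infty(u_\delta+t_0u^\delta)\ge m_0$, one gets $\mu^\infty(0)\ge m_0$. Combined with the obvious upper bound this yields $\mu^\infty(0)=m_0$; then Lemma~\ref{lem2.4} (with $K\equiv 1$) forces $t_0=1$, so any minimizer lies in $\mathcal N$ at level $m_0$, is therefore a ground state, and by the uniqueness in Lemma~\ref{lemmaR*} equals $w$. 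This Nehari-type scaling argument is the key tool; it applies directly to $S_y^\infty$ (no cutoff needed) and immediately gives both the value and the characterization of minimizers.

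Your rearrangement argument contains a real gap at the identification step. After showing that $\tilde u=(u^*)_\delta+t^*(u^*)^\delta$ is a radial minimizer with vanishing Lagrange multiplier and thus solves $-\Delta\tilde u-\tilde u+\tilde u^{q-1}=0$ in $A(\{y\},d)=B(y,R^*+d)$, you assert that ``the $C^1$ boundary decay from Lemma~\ref{lemmaR*}(iii) and elliptic regularity let one extend $\tilde u$ by zero to a compactly supported $H^1(\mathbb R^N)$-solution.'' But Lemma~\ref{lemmaR*}(iii) describes the boundary asymptotics of $w$ itself, not of an arbitrary Dirichlet minimizer on $B(y,R^*+d)$; it cannot be invoked before you already know $\tilde u=w_y$. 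A priori $\tilde u\in H_0^1(B(y,R^*+d))$ only guarantees $\tilde u=0$ on $\partial B(y,R^*+d)$, and (as Remark~\ref{rek4.2}(ii) in the paper makes explicit) the zero extension of such a minimizer is only known to be a \emph{subsolution} of the autonomous equation, because $\partial_\nu\tilde u\le 0$ on the boundary may be strict. Without ruling out a boundary layer ($\tilde u>0$ up to the boundary with nonvanishing normal derivative), the extended function need not solve the PDE on $\mathbb R^N$, so the uniqueness of the compactly supported ground state (Lemma~\ref{lemmaR*}(iv)) cannot be applied. The support estimate from Lemma~\ref{R0}/Remark~\ref{rek4.2}(ii) only gives $\supp\tilde u\subset B(y,R_0)$ with $R_0>R^*+d$, which does not help, and Lemma~\ref{lemmaR*}(v) is stated on $B(0,R^*)$, not $B(0,R^*+d)$, and requires strict positivity. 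Closing this gap would essentially require the paper's Nehari argument (show $\tilde u\in\mathcal N$ with $I^\infty(\tilde u)=m_0$, hence it is a ground state), at which point the symmetrization step is an unnecessary detour.

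Two smaller points: the commutation identities $(u^*)_\delta=(u_\delta)^*$, $(u^*)^\delta=(u^\delta)^*$ and the separate Polya--Szeg\"o estimates for the submerged and emerging parts are fine and are a nice observation, but be aware that the uniqueness step ($M_{\{y\},d}^\infty=\{w_y\}$ rather than just $w_y\in M_{\{y\},d}^\infty$) via Brothers--Ziemer has its own technical overhead (possible plateaus of $u^*$) that the paper sidesteps entirely. Also, your claim that the proofs of Lemma~\ref{lem2.10} ``only perturb $u$ inside $B(y,\rho)$'' is not accurate: that lemma's argument perturbs outside $\bigcup\overline{B(x_i,\rho_0)}$ via a global convex minimization in $H_0^1(A(\boldsymbol x,d))$, which on $\mathbb R^N$ needs a separate (if routine) coercivity/compactness adaptation.
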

\begin{proof}
Since 
for $d\geq 0$, $y\in\R^N$
$w_y\in S_{y,d}^\infty\subset S_y^\infty$, we have
$$m_0=I^\infty(w)\geq  \mu_d^\infty(y)\geq \mu^\infty(y)=\mu_1^\infty.$$
On the other hand, we take $u\in M_0^\infty$. Then
$g_u(t):=(I^\infty)'(u_\delta+tu^\delta)(u_\delta+tu^\delta)$ satisfies
$$
\begin{aligned}
g_u(t)
=t^2\int_{\R^N}|\nabla u^\delta|^2-(u^\delta)^2dx
+\int_{\R^N}(u_\delta+tu^\delta)^{q}  dx
-2\delta t\int_{\R^N} u^\delta dx
+\int_{\R^N} |\nabla u_\delta|^2-(u_\delta)^2 dx,
\end{aligned}
$$
which implies that $g_u(t)\to -\infty$ as $t\to+\infty$ and
$g_u(0)>0$. Then there is $t_0>0$ such that $g_u(t_0)=0$.
So we have $u_\delta+t_0u^\delta\in \mathcal N$, and consequently
$$\mu_1^\infty=\mu^\infty(0)=I^\infty(u)\geq I^\infty(u_\delta+t_0u^\delta)\geq\inf_{\mathcal N} I^\infty(u)=m_0,$$
where $\mathcal N$ is the Nehari manifold for $I^\infty$ given at the beginning of Section 2. Moreover, applying Lemma \ref{lem2.4} with $K=1$, we have $t_0=1$.

According to the above proof, for any $u\in M_{0}^\infty$, it holds $u\in\mathcal N$ and $I^\infty(u)=m_0$. Hence it is a ground state solution for $I^\infty$. By uniqueness,
$u=w$. Consequently, $M_{y,d}^\infty=M_y^\infty=\{w_y\ |\ y\in \R^N\}$.
\end{proof}
 By (K4), for every $K$ there is 
 $a_2(K)>0$ such that $K<1$ outside $B(0, a_2(K))$.
\begin{proposition}\label{muy}
Let $d\geq 0$,  $k\in \mathbb N\setminus\{0\}$. Then the following statements hold:
\begin{enumerate}
    \item For every \( y \in \mathbb{R}^N \setminus B(0, R_0 + a_2(K)) \), we have  
          \( \mu_d(y) \geq \mu(y) > m_0 \).
    \item  
          \( \lim_{|y| \to +\infty} \mu_d(y) = m_0 \).
          \item \( \mu_{1,d} \geq \mu_1 > m_0 \) and $\mu_{k+1,d}>\mu_{k,d}+m_0$.
    \item The supremum \( \mu_{k,d}=\sup_{\boldsymbol y\in\mathcal K_k}\mu_d(\boldsymbol y) \) is attained.
\end{enumerate}
\end{proposition}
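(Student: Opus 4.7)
The plan is to prove the four statements in the order (i), (ii), (iv), (iii), with the last two proved jointly by induction on $k$.

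\textbf{Part (i).} Pick $u \in M_y$ (which exists by Proposition \ref{pro2.5}). By Remark \ref{rek4.2}(ii), $\supp u \subset B(y, R_0)$, which is disjoint from $\overline{B(0, a_2(K))}$ by the assumption on $|y|$, so $K < 1$ pointwise on $\supp u$. Apply Lemma \ref{lem2.1} with $K$ replaced by $1$ to obtain $t^\infty > 0$ such that $v := u_\delta + t^\infty u^\delta \in S_y^\infty$, and note $I^\infty(v) \geq \mu^\infty(y) = m_0$ by Lemma \ref{lem414}. Then
\[
\mu(y) = I(u) = \max_{t \geq 0} I(u_\delta + t u^\delta) \geq I(v) > I^\infty(v) \geq m_0,
\]
where the strict inequality follows from $K < 1$ on the nontrivial support of $v$. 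The bound $\mu_d(y) \geq \mu(y)$ is immediate from $S_{y,d} \subset S_y$.

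\textbf{Part (ii).} The inequality $\liminf_{|y| \to \infty} \mu_d(y) \geq m_0$ follows from (i). For the matching upper bound, take $w_y = w(\cdot - y)$: by (K3), $K \to 1$ uniformly on $\overline{B(y, R^*)}$ as $|y| \to \infty$, so Lemma \ref{lem2.1} supplies $t_y > 0$ with $u_y := (w_y)_\delta + t_y w_y^\delta \in S_{y,d}$, and a uniqueness/continuity argument (the limit problem's maximizer being $t = 1$) forces $t_y \to 1$; hence $\mu_d(y) \leq I(u_y) \to I^\infty(w) = m_0$.

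\textbf{Parts (iv) and (iii), by joint induction on $k$.} The base $k=1$ of (iv) follows immediately from Lemma \ref{lem mu} (continuity of $\mu_d$), (i) (values strictly above $m_0$), and (ii) (decay to $m_0$ at infinity); the bound $\mu_{1,d} \geq \mu_1 > m_0$ in (iii) is immediate from $S_{y,d} \subset S_y$ and (i). For the inductive step, assuming (iv)$_k$, let $\boldsymbol{x}^* \in \mathcal{K}_k$ attain $\mu_{k,d}$ and choose $y$ with $|y| > R_0 + a_2(K)$ and $\min_j |y - x_j^*| \geq 2R_0$ (available on any sufficiently large sphere since $\boldsymbol{x}^*$ is finite); Corollary \ref{cor413} then gives
\[
\mu_{k+1, d} \geq \mu_d(\boldsymbol{x}^* \cup \{y\}) = \mu_d(\boldsymbol{x}^*) + \mu_d(y) > \mu_{k, d} + m_0,
\]
which is (iii)$_k$. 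To upgrade (iv) to level $k+1$, take a maximizing sequence $\boldsymbol{x}^n$ for $\mu_{k+1, d}$ and, after passing to a subsequence, partition $\{1, \ldots, k+1\}$ into clusters $G_1, \ldots, G_L$ (two indices lie in the same cluster when their pairwise distances remain bounded along the subsequence). If $L \geq 2$, Corollary \ref{cor413} eventually applies to give $\mu_d(\boldsymbol{x}^n) = \sum_l \mu_d(\{x_j^n : j \in G_l\})$; a cluster drifting to infinity contributes at most $|G_l| m_0$ in the limit (via (ii) combined with iterated Corollary \ref{cor413} for the autonomous problem), while a bounded cluster contributes at most $\mu_{|G_l|, d}$ by Lemma \ref{lem mu}, so iterating (iii) across the pieces yields $\limsup \mu_d(\boldsymbol{x}^n) < \mu_{k+1, d}$, a contradiction. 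Hence $L = 1$, all $|x_j^n|$ stay bounded, and Lemma \ref{lem mu} together with the preservation of the separation constraint delivers an attainer along a subsequence.

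The subtlest ingredient is the energy estimate for a cluster drifting to infinity, where one must show that $\mu_d$ of a translated bounded configuration converges to its autonomous counterpart $\mu_d^\infty$; this rests on the compact support of minimizers (Remark \ref{rek4.2}(ii)) together with the uniform decay $K \to 1$ guaranteed by (K3), making the argument effectively a translated version of Lemma \ref{lem mu}.
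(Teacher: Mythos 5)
Your handling of (i), (ii), and the inequality $\mu_{k+1,d}>\mu_{k,d}+m_0$ in (iii) matches the paper's arguments closely, apart from a cosmetic difference in (ii): you argue $t_y\to 1$, while the paper only needs $t_y$ bounded (Lemma~\ref{lem 4.8}) together with the comparison $I^\infty(\widehat w_y)\le I^\infty(w)=m_0$, which is a bit cleaner because it avoids identifying the limit of $t_y$ altogether.

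The gap is in your proof of attainment (iv)$_{k+1}$. You decompose a maximizing sequence into clusters $G_1,\dots,G_L$, derive a contradiction for $L\geq 2$, and then conclude ``Hence $L=1$, all $|x_j^n|$ stay bounded.'' That last step is a non sequitur: $L=1$ only says the \emph{diameter} of the configuration remains bounded along the subsequence, and is perfectly compatible with the entire cluster drifting to infinity as a block. Your final paragraph hints at the patch you would use in that scenario (showing $\mu_d$ of a translated configuration tends to $\mu_d^\infty$, which is $\leq (k+1)m_0<\mu_{k+1,d}$ by Corollary~\ref{cor413} and Lemma~\ref{lem414}), but this is precisely the ``subtlest ingredient'' you name without proving, and it effectively amounts to re-deriving a translated version of Lemma~\ref{lem mu}, which is not available off the shelf.

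The paper sidesteps this entirely with a much lighter argument: if $\max_i |x_i^n|$ is unbounded, pick a single escaping index, say $|x_1^n|\to\infty$, and use only the always-valid sub-additivity direction of Corollary~\ref{cor413},
\[
\mu_{k+1,d}=\lim_{n\to\infty}\mu_d(\boldsymbol x_n)\le \limsup_{n\to\infty}\mu_d(\{x_j^n\}_{j\neq 1})+\lim_{n\to\infty}\mu_d(x_1^n)\le \mu_{k,d}+m_0,
\]
contradicting (iii)$_k$. This treats the single-cluster drift, the multi-cluster drift, and every intermediate case in one stroke, with no clustering, no ``translated $\mu_d\to\mu_d^\infty$'' lemma, and no need to iterate the gap estimate across cluster sizes. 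I recommend replacing your clustering argument with this one-index peeling argument.
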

 
\begin{proof}
 Let $ u \in M_y $. Then $ \operatorname{supp} u \subset B(y, R_0) $, and by assumption (K4), we have $ K < 1 $ on this ball. From Lemma \ref{lem414}, it follows that
\[
\mu_d(y) \geq \mu(y) = I(u) \geq I(\widehat{u}) > I^\infty(\widehat{u}) \geq \mu_d^\infty(y) = m_0,
\]
where $ \widehat{u} := u_\delta + t u^\delta \in S_y^\infty $ for some $ t > 0 $.

To prove (ii), we consider $\widehat w_y:=(w(\cdot-y))_\delta+t_y(w(\cdot-y))^\delta\in S_{d,y}\subset S_{y}$.
By Lemma \ref{lem 4.8},  \( t_y \) is bounded. Together with the fact that $K(x)\to1$ as $|x|\to\infty$, 
we  obtain 
\[
\limsup_{|y|\to +\infty}\mu_d(y)\leq \limsup_{|y|\to +\infty} I (\widehat w_y)\leq\limsup_{|y|\to +\infty}I^\infty(\widehat w_y)+\limsup_{|y|\to +\infty}\int_{\R^N}(1-K(x))\widehat w_y^2\leq \limsup_{|y|\to +\infty}I^\infty(w) =m_0.
\]Since \( \mu_d(y) > m_0 \) for large \( y \) by (i), we conclude that \( \lim_{|y| \to +\infty} \mu_d(y) = m_0 \). 

We prove (iii) and (iv) by induction. Consider the case that $k=1$. By (i), we have $\mu_{1,d}\geq\mu_1>m_0$. To show $\mu_{1,d}$ can be attained, we consider a sequence $\{x_n\}\subset\R^N$ such that $\mu_d(x_n)\to\mu_{1,d}$ as $n\to\infty$.
By the continuity of $\mu_d$ given in Lemma \ref{lem mu}, it suffices to 
prove that $|x_n|$ is bounded. Otherwise, we assume without loss of generality that
$|x_n|\to +\infty$. Then by (ii), we have $\mu_{1,d}=\lim_{n\to+\infty}\mu_d(x_n)=m_0$,
which contradicts to (i). Let $y\in  \R^N$ be such that $\mu_{1,d}=\mu_d(y)$.
By (K4), choose
$z\in\R^N$ with $|z|$ sufficiently large such that $K(x)<1$ for $x\in B(z,R_0)$ and $\min_{1\leq i\leq k}|y-z|\geq 2R_0$.
Then by Corollary \ref{cor413} and  (i),
\begin{equation*}
	\mu_{2,d}\geq \mu_d(\{y,z\})= \mu_d(y)+\mu_d(z)> \mu_{1,d}+m_0.
\end{equation*} 
Thus we have proved that (iii) and (iv) hold for $k=1$.

To complete the proof, we assume that (iii) and (iv) hold for some $k\geq 1$. 
Let $\boldsymbol{y}=\{y_i\}_{i=1}^k\in  \mathcal{K}_k$
be such that $\mu_{k,d}=\mu_d(\boldsymbol{y})$.
By (K4), choose
$y_{k+1}\in\R^N$ with $|y_{k+1}|$
sufficiently large such that $K(x)<1$ for $x\in B(y_{k+1},R_0)$ and $\min_{1\leq i\leq k}|y_{k+1}-y_i|\geq 2R_0$.
Then by Corollary \ref{cor413} and  (i),
\begin{equation*}
\mu_{k+1,d}\geq \mu_d(\boldsymbol{y}\cup \{y_{k+1}\})= \mu_d(\boldsymbol{y})+\mu_d(y_{k+1})> \mu_d(\boldsymbol{y})+m_0=\mu_{k,d}+m_0.
\end{equation*} 
Consider a sequence $\boldsymbol x_n=\{x_i^n \}_{i=1}^{k+1}\in \mathcal K_{k+1}$ such that
$\mu_d(\boldsymbol x_n)\to\mu_{k+1,d}$ as $n\to\infty$.
By  the  continuity of $\mu_d$ given in Lemma \ref{lem mu}, to prove (iv) holds for $k+1$, it suffices to 
show that $\max_{1\leq i\leq k+1}|x_i^n|$ is bounded. Otherwise, we assume without loss of generality that
$|x_1^n|\to +\infty$.
Then by Corollary \ref{cor413} and (ii), we have 
\[
\mu_{k+1,d}=\lim_{n\to+\infty}\mu_d(\boldsymbol x_n)\leq
\limsup_{n\to+\infty}\mu_d(\{x_j^n\}_{j\neq 1}) +\lim_{n\to+\infty}\mu_d(x_1^n)\leq \mu_{k,d}+m_0,
\]
which contradicts to $\mu_{k+1,d}>\mu_{k,d}+m_0$.
\end{proof}

\section{Existence of solutions}
This section is dedicated to completing the proof of Theorem \ref{them1.1} on the existence of infinitely many multi-bump solutions. The overarching strategy is to analyze the behavior of the minimizers constructed in Section 4 as the potential 
$K$ approaches the $1$. To this end, we consider a sequence of potentials $(K_n)_n$
such that $\|K_n-1\|_{p,loc}\to 0$. For each $K_n$ we define the corresponding energy functional 
 $$I_n(u) =\frac12\int_{\RN}|\nabla u|^2-K_n(x) u^2 dx
 +\int_{\RN}  \frac1q|u|^q.$$
 Consequently, all related concepts, such as the sets of constrained functions 
$S^n_{\boldsymbol x,d}$ and the sets of minimizers $M^n_{\boldsymbol x,d}$, are defined in the same way as their counterparts in Section 4, but with respect to the functional $I_n$. Our goal is to show that for large 
$n$, any minimizer $u_n\in M^n_{\boldsymbol x,d}$
not only solves the localized constrained problem but indeed satisfies the original equation on the entire $\R^N$.
 This is achieved by proving that the Lagrange multipliers associated with the barycenter constraints vanish for large $n$, which relies crucially on the qualitative stability estimates from Section 3 and the precise control of the support of $u_n$.

\subsection{Behavior of $M_{\boldsymbol x}$ as $K(x)\to 1$}
\begin{lemma}\label{lem regu}
  Let $k \in \mathbb{N}\setminus\{0\}$, $ \boldsymbol x =\{x_i\}_{i=1}^k \in\mathcal K_k$, $d\in(0,\sigma_0]$, and $R\in (\rho,R^*]$.
Assume $u\in M_{\boldsymbol{x}, d}$ and set $v=\sum_{i=1}^k w(\cdot-x_i)$. 
Then there exists a constant $C_R > 0$, depending only on 
  $R$, but independent of $k$, $\boldsymbol{x}$, and $d$, 
 \[
 \|u-v\|_{L^\infty(A(\boldsymbol{x}, d) \setminus \bigcup_{i=1}^{k}\overline {B(x_i,R)})}\leq
 C_R  \left(\max_{1\leq i \leq k}  \|u-v\|_{L^2(B(x_i, R^*+d))}+\sigma(\boldsymbol{x}) +\|K-1\|_{p,loc}\right),\]
\end{lemma}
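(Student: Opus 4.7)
The plan is to view $\phi := u - v$ as a solution of a linear elliptic inequality on $\Omega_R := A(\boldsymbol{x}, d) \setminus \bigcup_{i=1}^{k} \overline{B(x_i, R)}$, then apply a local $L^2$-to-$L^\infty$ bound whose constant depends only on $R$, keeping all estimates on individual balls $B(x_j, R^*+d)$ so that the resulting bound is uniform in $k$. Since $R > \rho$, Lemma~\ref{lem2.10} yields $-\Delta u - K u + u^{q-1} = 0$ with $0 \leq u < \delta$ weakly on $\Omega_R$. Subtracting this from the sum of the autonomous equations $-\Delta w_i - w_i + w_i^{q-1} = 0$, where $w_i := w(\cdot - x_i)$, gives
\[
-\Delta \phi - \phi + (u^{q-1} - v^{q-1}) = (K-1)\,u + \Bigl( \sum_{i=1}^{k} w_i^{q-1} - v^{q-1} \Bigr) =: H \quad \text{in } \Omega_R.
\]
Because $s \mapsto s^{q-1}$ is increasing on $[0, \infty)$, the nonlinearity $u^{q-1} - v^{q-1}$ shares the sign of $\phi$, and Kato's inequality produces
\[
-\Delta |\phi| \leq |\phi| + |H|
\]
weakly on $A(\boldsymbol{x}, d) \setminus \bigcup_i \overline{B(x_i, R')}$ for any $R' \in (\rho, R)$. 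Since $u, v \in H_0^1(A(\boldsymbol{x}, d))$, extension by zero preserves the inequality on $\mathbb{R}^N \setminus \bigcup_i \overline{B(x_i, R')}$.

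Next I would apply a purely interior $L^p$--De Giorgi--Moser estimate. The buffer $R - R' > 0$ ensures that for every $x_0 \in \Omega_R$, the ball $B(x_0, R - R')$ avoids $\bigcup_i \overline{B(x_i, R')}$, so standard theory (with $p > N/2$) yields
\[
\sup_{B(x_0, (R-R')/2)} |\phi| \leq C_R \Bigl( \|\phi\|_{L^2(B(x_0, R-R'))} + \|H\|_{L^p(B(x_0, R-R'))} \Bigr).
\]
By the finite-overlap estimate~\eqref{eq 9}, the ball $B(x_0, R - R')$ meets at most $k_0$ of the balls $B(x_j, R^*+d)$, and $\phi$, $H$ are supported in their union; hence the right-hand side is controlled by $k_0$-independent multiples of the maxima over $j$ of the corresponding norms on $B(x_j, R^*+d)$. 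Taking supremum over $x_0 \in \Omega_R$ delivers the $k$-independent estimate
\[
\|\phi\|_{L^\infty(\Omega_R)} \leq C_R \max_{1 \leq j \leq k} \Bigl( \|\phi\|_{L^2(B(x_j, R^*+d))} + \|H\|_{L^p(B(x_j, R^*+d))} \Bigr).
\]

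Controlling $\|H\|_{L^p}$ closes the argument. A uniform $L^\infty$-bound on $u$ (from the $W^{2,p}$-regularity noted in Remark~\ref{rek4.2}) gives $\|(K-1)u\|_{L^p(B(x_j, R^*+d))} \leq C \|K - 1\|_{p,loc}$. The interaction term $\sum_i w_i^{q-1} - v^{q-1}$ is supported on the pairwise overlaps $B(x_i, R^*) \cap B(x_j, R^*)$ with $i \neq j$, and Lemma~\ref{lem 2.3} (applied with $r = p$) bounds its $L^p$-norm by $C\sigma(\boldsymbol{x})^{\alpha}$ for some positive exponent $\alpha = \alpha(q, N, p)$; since $\sigma \leq \sigma_0$ is uniformly bounded, this is absorbed into a multiple of $\sigma(\boldsymbol{x})$ by the constant $C_R$.

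The principal obstacle is preserving the $k$-independence of $C_R$. A naive energy test against $\phi$ on the whole $\Omega_R$ would introduce a spurious factor of $k^{1/2}$ through $\|\phi\|_{L^2(\Omega_R)}^2 \leq k \max_i \|\phi\|_{L^2(B(x_i, R^*+d))}^2$. This is precisely what the pointwise Kato inequality and the local Moser bound are designed to circumvent, leaving only the combinatorial dependence on the overlap count $k_0$, which by~\eqref{eq 9} depends solely on $N$.
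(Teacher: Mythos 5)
Your overall strategy—use Lemma~\ref{lem2.10} to get the equation on the exterior region, compare with the autonomous equation satisfied by $v$, reduce to a linear elliptic inequality, and apply a local $L^2$-to-$L^\infty$ estimate ball by ball to keep the constant $k$-independent—is precisely the paper's strategy. The paper factors the sublinear term as $(V-K)\phi$ with $V=(u^{q-1}-v^{q-1})/(u-v)$ of the favorable sign, while you discard it via Kato's inequality; these are two equivalent ways of exploiting the monotonicity of $s\mapsto s^{q-1}$. Writing the potential term as $(K-1)u$ rather than the paper's $(K-1)v$ is also harmless on the exterior region where $u<\delta$.

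There is, however, a genuine gap in the final estimate of the interaction term. You apply Lemma~\ref{lem 2.3} with $r=p$ and claim the resulting $C\sigma(\boldsymbol{x})^{\alpha}$, $\alpha=\frac{2(q-1)}{2-q}+\frac{N+1}{2p}$, can be ``absorbed into a multiple of $\sigma(\boldsymbol{x})$.'' This only works when $\alpha\geq 1$, since for $\alpha<1$ and $\sigma\to 0^+$ one has $\sigma^{\alpha}/\sigma=\sigma^{\alpha-1}\to\infty$, so the absorption fails. As $p$ is only assumed $>N/2$ and can be arbitrarily large, and $\frac{2(q-1)}{2-q}<1$ whenever $q<4/3$, the exponent $\alpha$ can indeed drop below $1$. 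The paper avoids this by taking the Lebesgue index $p'=\min\{p,(N+1)/2\}$ in the Moser step: then $\frac{N+1}{2p'}\geq 1$ guarantees $\alpha>1$, while $p'>N/2$ still holds so the De Giorgi--Moser estimate remains applicable. You should replace $r=p$ by $r=p'=\min\{p,(N+1)/2\}$ in your argument; with that change the proof is complete.
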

\begin{proof}
According to Lemma \ref{lem2.10}, the following equation weakly holds in $A(\boldsymbol{x}, d) \setminus \bigcup_{i=1}^{k}\overline {B(x_i,\rho)}$:
  \[
  -\Delta(u- v) -K(u-v) +  u^{q-1}- v^{q-1} = (K-1)v +(\sum_{i=1}^k w(\cdot-x_i)^{q-1}-v^{q-1}).
  \]
  Setting $V=(u^{q-1}-v^{q-1})/(u-v)$ and $h=(K-1)v +(\sum_{i=1}^k w(\cdot-x_i)^{q-1}-v^{q-1})$, then we have the following equation  
 \[
  -\Delta(u- v)+(V-K)(u-v) = h\ \ \text{in }A(\boldsymbol{x}, d) \setminus \bigcup_{i=1}^{k}\overline {B(x_i,\rho)}.
  \]
 Since $u,v\in[0,\delta)$ outside $\bigcup_{i=1}^k\overline {B(x_i,\rho)}$ and $K\leq a_1$, we have $V-K>0$ in $A(\boldsymbol{x}, d) \setminus \bigcup_{i=1}^{k}\overline {B(x_i,\rho)}$. 
Noting that $u, v=0$ outside $A(\boldsymbol{x}, d)$ and
$R^*+d\leq R^*+\sigma_0<R_0$, by standard Moser's iteration, we have for each $i\in\{1,\cdots,k\}$ that
   \[
 \|u-v\|_{L^\infty(B(x_i,R^*+d) \setminus\overline {B(x_i,R)})}\leq
 C  \left( \|u-v\|_{L^2(B(x_i, R_0))}+\|h\|_{L^{p'}(B(x_i,R_0))}\right),\]
 for $p'=\min\{p, (N+1)/{2}\}>N/2$.
 By \eqref{eq 9}, $R^*+\sigma_0<R^*+\sigma_0^{\frac{2-q}{2}}=R_0$, $\supp h\subset \A 0$, and Lemma \ref{lem 2.3}, 
 \begin{equation*} 
 \|h\|_{L^{p'}(B(x_i,R_0))} \leq \sum_{j\in \mathscr{I}_{i}}\|h\|_{L^{p'}(B(x_j,R^*))} \leq Ck_0(\sigma(\boldsymbol{x})+\|K(x)-1\|_{p,loc}).
 \end{equation*}
 And similarly, 
  \begin{equation*} 
 \|u-v\|_{L^2(B(x_i, R_0))}\leq \sum_{j\in \mathscr{I}_{i}}\|u-v\|_{L^2(B(x_j,R^*+d))}\leq k_0\max_{1\leq j \leq k}  \|u-v\|_{L^2(B(x_j, R^*+d))}.
 \end{equation*}
Then the conclusion follows from the abitrary choice of $i\in\{1,\cdots,k\}$.
\end{proof}

\begin{proposition}\label{pro3.1} 
Let $k_n \in \mathbb{N}\setminus\{0\}$, $\boldsymbol{x}_n = \{x_i^n\}_{i=1}^{k_n} \in \mathcal{K}_{k_n}$, and $d_n \geq 0$ be such that $\sigma(\boldsymbol{x}_n)\to 0$, $\|K_n -1\|_{p,loc}\to 0$,  $ d_n \to 0$.
 Then there is $C>0$ indendent of $n$ such that  for any $u_n\in M^n_{\boldsymbol x_n,d_n}$,  it holds that 
 \begin{enumerate}
 	\item \(  \|u_n\|_{\boldsymbol{x}_n, d_n}\leq C \).
 	\item \( \lim_{n\to+\infty}    \max_{1\leq j\leq k_n} \|u_n - w(\cdot-x_j^n)\|_{L^2(B(x_j^n, R^*))}=0 \).
 	\item  
 $\supp(u_n)^\delta\subset \bigcup_{i=1}^{k_n}B(x_i^n,\rho-\sigma_0/2)$ for large $n$.
 \end{enumerate}
\end{proposition}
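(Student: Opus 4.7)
The proof has three stages, one for each claim, and the bootstrap inside (ii) is the technical heart.

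For (i), the point is to localize one bump at a time. Fixing $j$ and removing the $j$-th emerging part yields $\tilde u_n := u_n - (u_n)_j^\delta \in S^n_{\boldsymbol x_n \setminus \{x_j^n\},\, d_n}$, since the other bumps' Nehari and barycenter constraints are untouched. Corollary \ref{cor413}, Lemma \ref{lem414}, and the continuity in Lemma \ref{lem mu} (giving $\mu^n_{d_n}(x_j^n) \to m_0$ uniformly in $j$) then yield
\[
J_\delta\bigl((u_n)_j^\delta\bigr) = I_n(u_n) - I_n(\tilde u_n) \leq \mu^n_{d_n}(x_j^n) \leq m_0 + o_n(1).
\]
The Nehari--Pohozaev computation in the proof of Lemma \ref{lem2.5} turns this per-bump energy bound into a uniform $H^1$-bound on each emerging part. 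Outside the $\rho$-balls, Lemma \ref{lem2.10} combined with elliptic regularity (applied to a PDE with bounded coefficients and $0 \leq u_n \leq \delta$) gives the matching $H^1$-bound on the complementary annular region. Summing the two contributions proves (i).

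For (ii), I argue by contradiction: extract a subsequence and indices $j_n$ with $\|u_n - w(\cdot - x_{j_n}^n)\|_{L^2(B(x_{j_n}^n, R^*))} \geq \eta > 0$, translate so $x_{j_n}^n = 0$, and pass to a weak $H^1$-limit $u_0$ on $B(0, R^*+d_0)$ (strong in $L^r$ for $r < 2^*$ by Rellich). Because $\sigma(\boldsymbol x_n) \to 0$, only the central bump is visible near the origin, so the constraints $\beta_1(u_0) = 0$ and $(I^\infty)'(u_0)(u_0)^\delta = 0$ pass to the limit, as does the Lagrange equation from Lemma \ref{lem lambda}. To identify $u_0 = w$ via Lemma \ref{lemmaR*}(v) one needs $u_0|_{B(0,R^*)} \in H_0^1(B(0,R^*))$, which in turn demands the support restriction $\supp u_n \subset \bigcup_i B(x_i^n, R_n)$ with $R_n \to R^*$ required by Lemma \ref{lem2.2}. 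This I obtain by a bootstrap combining (i), elliptic regularity on the equation from Lemma \ref{lem2.10} (yielding $C^0$-convergence on the annular regions outside the $\rho$-balls), and iterated application of Lemma \ref{R0} with $\kappa_n \to 0$ harvested from the resulting smallness. The Lagrange multiplier $\lambda_0$ is then killed by the same divergence/orthogonality computation used at the end of Lemma \ref{lem2-3}'s proof, leaving $u_0$ as a nonnegative Dirichlet solution of $-\Delta u_0 - u_0 + u_0^{q-1} = 0$ on $B(0, R^*)$; Lemma \ref{lemmaR*}(v) forces $u_0 = w$, contradicting $\|u_0 - w\|_{L^2(B(0,R^*))} \geq \eta$.

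For (iii), Lemma \ref{lem regu} (now applicable by (i) and (ii)) gives
\[
\|u_n - v_n\|_{L^\infty\bigl(A(\boldsymbol x_n, d_n) \setminus \bigcup_i \overline{B(x_i^n, R)}\bigr)} \to 0 \qquad \text{for any } R \in (\rho, R^*],
\]
with $v_n = \sum_i w(\cdot - x_i^n)$. The Lagrange equation from Lemma \ref{lem lambda} with bounded $\lambda_n$ lifts the $L^2$-convergence of (ii) to $C^0$-convergence on compact subsets of $B(x_i^n, \rho)$, in particular covering the annulus $\{|x - x_i^n| \in [\rho - \sigma_0/2,\, \rho]\}$. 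On this annulus the strict monotonicity of $w$ (Lemma \ref{lemmaR*}(ii)) and the choice $\delta = w(R^* - 4\sigma_0)$ yield $w(\cdot - x_i^n) \leq w(\rho - \sigma_0/2) < \delta$ with a fixed positive gap; the uniform convergence therefore forces $u_n < \delta$ there for large $n$, which is exactly (iii).

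The dominant obstacle is the support-shrinking bootstrap inside (ii): one must contract the a priori containment $\supp u_n \subset \bigcup_i B(x_i^n, R_0)$, valid only with the fixed radius $R_0 > R^*$, down to $\bigcup_i B(x_i^n, R_n)$ with $R_n \to R^*$, all before $u_0$ has been identified with $w$. The engine is Lemma \ref{R0} fed by elliptic-regularity-based smallness on the annular regions, which is driven only by the a priori $H^1$-bound from (i); everything downstream is relatively routine.
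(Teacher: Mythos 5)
There are three genuine problems, the first two of which are fatal gaps.

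\textbf{The per-bump energy bound in (i) does not go through as stated.} You set $\tilde u_n := u_n - (u_n)_j^\delta$ and claim $\tilde u_n \in S^n_{\boldsymbol x_n \setminus \{x_j^n\},\, d_n}$. But $S^n_{\boldsymbol y, d_n}$ requires membership in $H_0^1(A(\boldsymbol y, d_n))$, and $\tilde u_n$ still equals $u_n$ on the submerged part inside $B(x_j^n, R^*+d_n)$, so $\supp \tilde u_n \subset A(\boldsymbol x_n, d_n)$ but not $A(\boldsymbol x_n\setminus\{x_j^n\}, d_n)$. Hence the lower bound $I_n(\tilde u_n)\geq \mu^n_{d_n}(\boldsymbol x_n\setminus\{x_j^n\})$ is not justified. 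Without it, Corollary \ref{cor413} alone yields only $\sum_j J_\delta((u_n)_j^\delta) \leq \mu^n_{d_n}(\boldsymbol x_n) \leq k_n m_0 + o(k_n)$, which is useless for a bound uniform in $k_n$. The paper circumvents this entirely: it does not prove (i) as a standalone per-bump energy estimate. It tests the PDE from Lemma \ref{lem2.10} against $\phi_n^2 u_n$ (using $u_n\leq\delta$ on $\supp\nabla\phi_n$) to get the annular $H^1$-bound directly, and obtains the bound inside $B(x_j^n,\rho+\sigma_0/2)$ only afterwards, as a byproduct of the strong $H^1$-convergence $u_n(\cdot+x_{j_n}^n)\to w$ proved in Step~3. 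Items (i) and (ii) come out together, not sequentially.

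\textbf{The ``dominant obstacle'' is not an obstacle.} You propose a support-shrinking bootstrap to contract $\supp u_n$ from $\bigcup_i B(x_i^n,R_0)$ down to $\bigcup_i B(x_i^n, R_n)$ with $R_n\to R^*$, calling this the technical heart of the proof. But by definition $u_n\in M^n_{\boldsymbol x_n,d_n}\subset H_0^1(A(\boldsymbol x_n,d_n))$, so $\supp u_n\subset \bigcup_i \overline{B(x_i^n, R^*+d_n)}$ already, and $d_n\to 0$ is a hypothesis. Lemma \ref{lem2.2} applies with $R_n:=R^*+d_n$ immediately, as the paper does in Step~1. The bootstrap you describe is solving a nonexistent problem.

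\textbf{The identification of the weak limit with $w$ is circular.} You propose to kill the limiting Lagrange multiplier $\lambda_0$ by ``the same divergence/orthogonality computation used at the end of Lemma \ref{lem2-3}'s proof'' and then invoke Lemma \ref{lemmaR*}(v). But the computation that kills $\lambda'$ in Proposition \ref{pro.lim} (Step 2) relies on the limit already being $w$: it tests against $\nabla w\cdot\lambda'$ and uses the specific identity $\int (\lambda'\cdot x)w^\delta(\nabla w\cdot\lambda') = -\tfrac12|\lambda'|^2\int(w^\delta)^2$. For a general limit $u_0\neq w$, testing the PDE against $\partial_e u_0$ gives only a Pohozaev-type boundary identity $-\tfrac12\int_{\partial B}( \partial_\nu u_0)^2(\nu\cdot e) = -\tfrac{|\lambda_0|}{2}\int(u_0^\delta)^2$, from which one cannot conclude $\lambda_0=0$ without additional symmetry of $u_0$. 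Also, Lemma \ref{lem2-3} requires a priori $L^2$-closeness of $u$ to $w$, precisely what you are trying to prove. The paper avoids the Lagrange multiplier entirely at this stage: Steps 2--3 show that the localized slice $v_n$ of $u_n$ satisfies $I_n(v_n)\leq m_0+o_n(1)$ (energy splitting plus a competitor built from $w$), so a scaled version of $v_n$ is a minimizing sequence for $\mu^\infty(0)=m_0$, whose unique minimizer is $w$ by Proposition \ref{pro2.5} and Lemma \ref{lem414}. This variational identification is what makes the argument close.

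Your treatment of (iii) — elliptic regularity giving uniform $C^0$-convergence on the annulus, then using the strict gap $w(\rho-\sigma_0/2)<\delta$ — is essentially correct and matches the paper's Step~4, but it presupposes a correct proof of (ii) and a valid control of the Lagrange multipliers via Lemma \ref{lem2-3}, which the paper obtains only \emph{after} establishing the $L^2$-smallness in (ii).
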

\begin{proof}
Recalling \eqref{26} that $u_n$ satisfies
 \begin{equation*}  
        -\Delta u_n-K_n(x)u_n+u_n^{q-1}=0,\quad\ 0\leq   u_n < \delta\ \text{in\ }A(\boldsymbol{x_n}, d_n) \setminus \bigcup_{j=1}^{k_n}\overline {B(x_j^n,\rho)}.
\end{equation*}

\noindent\textbf{Step 1.} For any $j_n\in\{1,\cdots,k_n\}$ , there is $\bar v\in H^1_0(B(0,R^*))$ such that
\begin{equation}\label{eq 45}
u_n(\cdot+x_{j_n}^n)\rightharpoonup \bar v\ \text{in }
H^1\big(B(0,R^*)\setminus \overline{B(0,\rho+\sigma_0/2)}\big).
\end{equation}
Let
$\phi\in C_0^\infty(B(0,R^*+2\sigma_0)\setminus \overline{B(0,\rho)},[0,1])$ be a cut-off function such that  
$\phi=1$ in $B(0,R^*+\sigma_0)\setminus B(0,\rho+\sigma_0/2)$  and $|\nabla\phi|\leq C$. 
Then by $\rho= R^*-3\sigma_0$, $|x_i^n-x_j^n|\geq 2R^*-\sigma(\boldsymbol{x}_n)=2R^*+o_n(1)$ for $i\neq j$, we know that $\supp \phi(\cdot-x_{j_n}^n)\subset B(x_{j_n}^n,R^*+2\sigma_0)\setminus \bigcup_{j=1}^{k_n}\overline {B(x_j^n,\rho)}$. Setting $\phi_n=\phi(\cdot-x_{j_n}^n)$, we have 
$$\phi_nu_n,\ \phi_n^2u_n\in H_0^1\big(A(\boldsymbol{x_n}, d_n) \setminus \bigcup_{j=1}^{k_n}\overline {B(x_j^n,\rho)}\big).$$
So we can test
the equation with $\phi_n^2 u_n$, and we get
$$\int_{\R^N} |\nabla (\phi_nu_n)|^2+\phi_n^2(\delta^{q-2}-a_1)u_n^2  \leq \int_{\R^N}|\nabla\phi_n|^2u_n^2 \leq C.$$
This, together with the fact $\phi_n\in C_0^\infty(B(x_{j_n}^n,R^*+2\sigma_0))$, implies the boundedness of $\{\phi_nu_n\}$ in $H^1(\R^N)$. In particular, 
\begin{equation}\label{eq5.2}
\|u_n\|_{H^1(B(x_{j_n}^n, R^*+\sigma_0)\setminus \overline{B(x_{j_n}^n,\rho+\sigma_0/2)})}\leq C.	
\end{equation}
Moreover, $\phi_nu_n=0$ in $B(x_{j_n}^n,R^*+2\sigma_0)\setminus \cup_{j\in\mathscr I_{j_n}^n }B(x_j^n, R^*+d_n)$ with \(\mathscr{I}_{j_n}^n = \{ j \mid x_j^n \in B(x_{j_n}^n, 2R_0) \}\). According to Lemma \ref{lem2.2},  
$\phi u_n(\cdot+x_{j_n}^n) \rightharpoonup  v$ in $H^1(B(0, R^*+2\sigma_0))$, and $\bar v:=v|_{B(0, R^*)} \in H_0^1(B(0, R^*)$. Recalling that $\phi=1$ in $B(0,R^*)\setminus B(0,\rho+\sigma_0/2)$, we have \eqref{eq 45}.

\noindent\textbf{Step 2.}
Let
$$v_{n}:=(1-\varphi(\cdot-x_{j_n}^n)) u_n
+  (\varphi \bar v)(\cdot-x_{j_n}^n) \in H_0^1(B(x_{j_n}^n,R^*)),$$
 with $\varphi\in C_0^\infty(\R^N\setminus \overline{B(0,\rho+\sigma_0/2)},[0,1])$ being  a cut-off function such that $\varphi=1$ in $\R^N\setminus B(0,\rho+\sigma_0)$.
 We prove that  
\begin{equation}\label{eq 46}
I_n (u_n) 
\geq I_n (v_{n})+ I_n(u_n-v_n)+o_n(1).
\end{equation} 
In fact, we have 
$$\int_{\R^N}|\nabla u_n|^2=\int_{\R^N}|\nabla (u_n-v_n)|^2+|\nabla v_n|^2+2\nabla (u_n-v_n)\nabla v_n,$$
and
\begin{equation*}
\begin{aligned}
\int_{\R^N}\nabla(u_n-v_n)\nabla v_n
=\int _{B(0,R^*)}\nabla\Big(\varphi\big(u_n(\cdot+x_{j_n}^n)-\bar v\big)\Big) \nabla \Big((1-\varphi) u_n(\cdot+x_{j_n}^n)
+  \varphi \bar v\Big).\\
\end{aligned}
\end{equation*}
Since $u_n(\cdot+x_{j_n}^n)\rightharpoonup \bar v\ \text{in }
H^1(B(0,R^*)\setminus B(0,\rho+\sigma_0/2))$, we have 
\begin{equation*}
\begin{aligned}
\int_{\R^N}\nabla(u_n-v_n)\nabla v_n
&=\int _{B(0,R^*)\setminus B(0,\rho+\sigma_0/2)}\varphi(1-\varphi)\nabla\big(u_n(\cdot+x_{j_n}^n)-\bar v\big) \nabla  u_n(\cdot+x_{j_n}^n)+o_n(1)\\
&=\int _{B(0,R^*)\setminus B(0,\rho+\sigma_0/2)}\varphi(1-\varphi)\big(|\nabla u_n(\cdot+x_{j_n}^n)|^2-|\nabla \bar v|^2\big) +o_n(1)\\
&\geq o_n(1),
\end{aligned}
\end{equation*}
where the last inequality follows from the facts that
$u\to\int_{B(0,R^*)\setminus B(0,\rho+\sigma_0/2)}\varphi(1-\varphi)|\nabla u|^2$ is continuous and convex on $H^1(B(0,R^*)\setminus B(0,\rho+\sigma_0/2))$, and hence is weakly lower semi-continuous. Thus we have proved that
$$\int_{\R^N}|\nabla u_n|^2\geq\int_{\R^N}|\nabla (u_n-v_n)|^2+|\nabla v_n|^2+o_n(1).$$
Moreover, by  \eqref{eq 45}, we have
$ 
u_n(\cdot+x_{j_n}^n)\to \bar v\ \text{in }
L^q(B(0,R^*)\setminus B(0,\rho+\sigma_0/2)),
$
and hence
$$\|u_n-v_n\|_{L^q(B(x_{j_n}^n,R^*))}=\|\varphi(u_n(\cdot+x_{j_n}^n)-v)\|_{L^q(B(0,R^*))}=o_n(1).$$
Combining this with $v_n=0$ outside $B(x_{j_n}^n,R^*)$, we deduce that
$$
\begin{aligned}
\int_{\R^N}|u_n|^q
&=\int_{B(x_{j_n}^n,R^*)}|u_n|^q+\int_{\R^N\setminus B(x_{j_n}^n,R^*)}|u_n|^q\\
&=\int_{B(x_{j_n}^n,R^*)}|u_n-v_n|^q+|v_n|^q+\int_{\R^N\setminus B(x_{j_n}^n,R^*)}|u_n-v_n|^q+|v_n|^q
+o_n(1).
\end{aligned}$$
Similarly,
$$
\begin{aligned}
\int_{\R^N}K_n(x)|u_n|^2
=\int_{\R^N}K_n(x)|u_n-v_n|^2+K_n(x)|v_n|^2+o_n(1).
\end{aligned}$$
By these,  \eqref{eq 46} is true.

\noindent\textbf{Step 3.} $\bar v=w$ in $B(0,R^*)\setminus B(0,\rho+\sigma_0/2)$, and it holds
 \begin{equation*} 
 u_n(\cdot+x_{j_n}^n)\rightharpoonup w\ \ \text{in }
H^1(B(0,R^*)\setminus \overline{B(0,\rho+\sigma_0/2)});\ \ \ 
u_n(\cdot+x_{j_n}^n)\to w\ \ \text{in }
H^1(B(0,\rho+\sigma_0/2)).
\end{equation*} 
Let $t_n>0$ be such that
 $w_{n}:=(w(\cdot-x_{j_n}^n))_\delta+t_{n}(w(\cdot-x_{j_n}^n))^\delta\in S^n_{x_{j_n}^n,d_n}$. Then by $u_n-v_n=0$ in $B(x_{j_n}^n,\rho+\sigma_0/2)$, $u_n-v_n=u_n$ in
 $A(\boldsymbol{x_n}, d_n) \setminus  B(x_{j_n}^n,R^*)$,
 $|u_n-v_n|=|\varphi(\cdot-x_{j_n}^n) (u_n-\bar v)|\in[0,\delta]$ in $B(x_{j_n}^n,R^*)\setminus B(x_{j_n}^n,\rho+\sigma_0/2)$,
  $u_n\in M_{\boldsymbol x_n,d_n}^n$,   and 
 $\sigma(\boldsymbol{x}_n)\to 0$, it holds 
$w_{n}\vee (u_n-v_n)\in S_{\boldsymbol x_n, d_n}^n$.
  So we have
\begin{equation}\label{37}
I_n (u_n)\leq I_n(w_{n}\vee (u_n-v_n))= I _n(w_{n})+I_n(u_n-v_n)-I_n(w_{n}\wedge (u_n-v_n))
\leq m_0+ I_n(u_n-v_n)+o_n(1),
\end{equation}
where $I_n(w_n)\leq m_0+o_n(1)$ since $\{t_n\}$ are bounded and $I_n(w_n)=I^\infty(w_n)+o_n(1)\leq I^\infty(w)+o_n(1)$,
$I_n(w_{n}\wedge (u_n-v_n))\geq 0$ since $|w_{n}\wedge (u_n-v_n)|\leq \delta$. 
By \eqref{eq 46} and \eqref{37},
\begin{equation} \label{eq 48}
I_n (v_{n}) 
\leq m_0 +o_n(1).
\end{equation}
Recalling the definition of $v_n\in H_0^1(B(x_{j_n}^n,R^*))$, we know that $v_n=u_n$ in $B(x_{j_n}^n,\rho+\sigma_0/2)$ and
$v_n\in[0,\delta]$ in $B(x_{j_n}^n,R^*)\setminus \overline{B(x_{j_n}^n,\rho+\sigma_0/2)}$.
Thus  $v_{n}\in  S_{x_{j_n}^n  }^n$, combining which with Lemma \ref{lem2.5},
we obtain that $\{v_n\}$ are bounded in $X_q$.
Let $\tilde v_{n}= v_{n} +s_{n} (v_n)^\delta\in S^\infty_{0}$. Then  by 
$$
\begin{aligned}
0&=\frac{d}{dt}I^\infty((v_n)_\delta+t(v_n)^\delta)|_{t=1+s_n}\\
&=(1+s_n)\int_{\R^N}|\nabla (v_n)^\delta|^2-((v_n)^\delta)^2dx
+\int_{\R^N}(\delta+(1+s_n)(v_n)^\delta)^{q-1}(v_n)^\delta dx
-\delta\int_{\R^N} (v_n)^\delta dx,
\end{aligned}
$$
Lemma \ref{lem2.3}, and the bounded-ness of $\{v_n\}$ in $X_q$, we have $|s_n|\leq C$. Then by Lemma \ref{lem2.4},
$$m_0\leq I^\infty(\tilde v_n)=I_n(\tilde v_n)+o_n(1)
\leq I_n(v_n)-c_0\min\set{(s_n)^2,1}+o_n(1).$$
This, together with \eqref{eq 48}, implies that $s_n\to 0$ as $n\to\infty$.
Thus 
  $ \tilde v_{n} (\cdot + x_{j_n}^n) \in S_{0}^\infty$ is a minimizing sequence for $\mu^\infty(0)$ since
  $I^\infty(\tilde v_n)=I^\infty(v_n)+o_n(1)\leq m_0+o_n(1)$.
 Applying Proposition \ref{pro2.5} with $K\equiv1$ and Lemma  \ref{lem414}, we obtain $\tilde v_{n} (\cdot + x_{j_n}^n) \to w$ in $X_q$. Since $s_n\to 0$, it holds $v_n(\cdot + x_{j_n}^n)\to w$ in $X_q$. On the other hand, by \eqref{eq 46} and the definition of $v_n$, we have $v_n(\cdot+x_{j_n}^n)\rightharpoonup v\ \text{in }
H^1(B(0,R^*)\setminus \overline{B(0,\rho+\sigma_0/2)})$, and hence
$v=w$ in $B(0,R^*)\setminus \overline{B(0,\rho+\sigma_0/2)}$. Taking this into \eqref{eq 45}, we get
 \begin{equation*} 
u_n(\cdot+x_{j_n}^n)\rightharpoonup w\ \ \text{in }
H^1(B(0,R^*)\setminus \overline{B(0,\rho+\sigma_0/2)}).
\end{equation*}
In $B(0,\rho+\sigma_0/2)$, $v_n(\cdot + x_{j_n}^n)=u_n(\cdot + x_{j_n}^n)$, and hence
 \begin{equation*} 
u_n(\cdot+x_{j_n}^n)\to w\ \ \text{in }
H^1(B(0,\rho+\sigma_0/2)).
\end{equation*} 
By these, \eqref{eq 9} and \eqref{eq5.2}, we have (i) and (ii). 
 
\noindent\textbf{Step 4.} It holds $\supp(u_n)^\delta\subset \bigcup_{i=1}^{k_n}B(x_i^n,\rho-\sigma_0/2)$ for large $n$.

Let $i_n\in\{1,\cdots,k_n\}$ be arbitrary.
According to Step 1--3, $u_n(\cdot+x_{i_n}^n)\to w$ in 
$H^1(B(0,\rho+\sigma_0/2))$. Moreover,
by Lemma \ref{lem2-3} and  Lemma \ref{lem lambda}, we know that for some $\lambda_n\to 0$,
\begin{equation} \label{eq 49}
-\Delta u_n  -u_n  + u_n^{q-1}  =  (K_n-1) u_n +\lambda_{n} \cdot(x-x_{i_n}^n) (u_n)_{i_n}^\delta\  \text{  in } H^1(B(x_{i_n}^n, \rho)).
\end{equation} 
 By elliptic regularity theory, $u_n(\cdot+x_{i_n}^n)$ is bounded in $C^{\beta}(U)$ for some $\beta\in(0,2-n/p)$ in any compact subset $U\subset B(0,\rho)$.
Therefore, $u_n(\cdot+x_{i_n}^n)$ converges uniformly to $w$ in any compact subset $U\subset B(0,\rho)$.
 Noting that
$w(x)<\delta$ for $|x|> \rho-\sigma_0$, up to a subsequence if necessary, we have
\begin{equation*} 
u_{n}<\delta\ \ \text{for }x\in B(x_{i_n}^{n},\rho-\sigma_n)\setminus B(x_{i_n}^{n},\rho-\frac12\sigma_0),\ \ \text{where }\sigma_n\in(0,\frac12\sigma_0),\ \sigma_n\to 0\ \text{as }n\to\infty.
\end{equation*}
Then we can write 
\[
(u_n)_{i_n}^\delta=u_{n,1} +u_{n,2}, \text{ with } \supp u_{n,1}\subset B(x_{i_n}^n,\rho-\sigma_0/2),\ \ \supp u_{n,2} \subset B(x_{i_n}^n, \rho)\setminus B(x_{i_n}^n, \rho-\sigma_n).
\]
Next we prove $u_{n,2}=0$ for large $n$.
  Testing \eqref{eq 49} by $u_{n,2}$, since $u_{n,1}$ and $u_{n,2}$ has disjoint
support, we get
\begin{equation*}
\begin{aligned}
\int|\nabla u_{n,2}|^2
-K_n(\delta+u_{n,2}) u_{n,2} 
+\int (\delta+u_{n,2})^{q-1} u_{n,2} 
=\int(u_{n,2})^2( \lambda_n \cdot (x-x_{i_n}^n)). 
\end{aligned}
\end{equation*}
 By this and the same proof as \eqref{eq10}, since $\lambda_n\to 0$,  we obtain that
\begin{equation*} 
	\begin{aligned}
		\int_{\R^N}|\nabla u_{n,2}|^2-(K_n(x)+o_n(1))(u_{n,2})^2dx
		&\leq -(1-t_*)\int_{\R^N}(u_{n,2})^{q}dx,
	\end{aligned}
\end{equation*}
where $t_*\in(0,2-q)$ is given in \eqref{eq9}.
Then by \eqref{GN} and the Young inequality, we get
$$
\begin{aligned}
	\|u_{n,2}\|_{q}^q\leq C_{q,N}\|u_{n,2}\|_{q}^2,
\end{aligned}
$$
which implies that $\|u_{n,2}\|_{q}\geq c>0$ if $u_{n,2}\neq 0$.
Noting the following fact
\begin{equation*}
\begin{aligned}
\int|u_{n,2}|^q
\leq |\supp (u_{n,2})|^{\frac{2-q}{2}}\Big(\int|u_{n,2}|^2\Big)^{q/2}
= o_n(1), 
\end{aligned}
\end{equation*} 
we get
 $u_{n,2}=0$ for large $n$. Consequently,  $\supp (u_n)_{i_n}^\delta \subset B(x_{i_n}^n,\rho-\frac12\sigma_0)$ for large $n$.  
\end{proof}
In the following proof, let $\alpha_1 > 0$ be the constant given in Lemma \ref{sss}, and let $\varepsilon_1 > 0$ and $\sigma_1 > 0$ be the constants provided by Proposition \ref{prop3.1}. Then, by combining Lemma \ref{lem2-3}, Lemma \ref{lem2.10}, Lemma \ref{lem lambda}, and Proposition \ref{pro3.1}, we obtain the following corollary.

\begin{corollary}\label{cor3.2}
There exist small $d_0\in(0,\sigma_0)$, $\sigma_2\in(0, \sigma_1)$, and $\alpha_2\in(0,\alpha_1)$, such that  for any 
$k\in\N\setminus\{0\}$,  $\boldsymbol x =\{x_i\}_{i=1}^k\in\mathcal K_{k}$ satisfying $\sigma(\boldsymbol{x})\leq \sigma_2$, any $K$ with  $\|K-1\|_{p,loc}\leq \alpha_2$, 
and  $u\in M_{\boldsymbol x, d_0}$,   there holds 
\[ 
 \max_{1\leq j\leq k} \|u-w(\cdot-x_j )\|_{L^2(B(x_j, R^*))}
  \leq \e_1,
\]
and
$\supp u_i^\delta \subset B(x_i, \rho-\sigma_0/2)$ for each $i=1,\cdots,k$. Moreover, 
 there exist $\lambda_{i,d_0}\in \R^N$ with $|\lambda_{i,d_0}|\leq 1$ for each $i=1,\cdots,k$, such that 
\begin{equation*}
  -\Delta u-K(x)u+ u^{q-1} 
=\sum_{i=1}^{k}\lambda_{i,d_0}\cdot(x-x_i)u_i^\delta\ \ \text{in }\A {d_0}.
\end{equation*} 
In particular, $\|I'(u)\|_{*,u}=0$.
\end{corollary}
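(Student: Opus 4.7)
The approach is to derive everything from Proposition \ref{pro3.1} via a contradiction/compactness argument, and then read off the equation and multiplier bound from Lemma \ref{lem2.10}, Lemma \ref{lem lambda}, and Lemma \ref{lem2-3}. First I would argue by contradiction for the $L^2$-closeness and support claims. Suppose no triple $(d_0,\sigma_2,\alpha_2)$ works. Then for each $n$ one can find $k_n$, a configuration $\boldsymbol{x}_n\in\mathcal K_{k_n}$ with $\sigma(\boldsymbol{x}_n)\le 1/n$, a potential $K_n$ with $\|K_n-1\|_{p,\mathrm{loc}}\le 1/n$, a radius $d_n\le 1/n$, and a minimizer $u_n\in M^n_{\boldsymbol{x}_n,d_n}$ violating either $\max_j\|u_n-w(\cdot-x_j^n)\|_{L^2(B(x_j^n,R^*))}\le\varepsilon_1$ or the support inclusion $\supp(u_n)^\delta\subset\bigcup_i B(x_i^n,\rho-\sigma_0/2)$. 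But Proposition \ref{pro3.1}(ii)--(iii) says exactly that both conclusions hold along \emph{every} such sequence for all large $n$, a contradiction. Fixing $d_0,\sigma_2,\alpha_2$ small enough gives the first two statements of the corollary.

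Next, once $\supp u_i^\delta\subset B(x_i,\rho-\sigma_0/2)\subsetneq B(x_i,\rho)$, Lemma \ref{lem2.10} applied with $\rho_0:=\rho-\sigma_0/2$ yields
\[
-\Delta u-K(x)u+u^{q-1}=0\qquad\text{in }A(\boldsymbol{x},d_0)\setminus\bigcup_{i=1}^k\overline{B(x_i,\rho-\sigma_0/2)}.
\]
On each $B(x_i,\rho)$, Lemma \ref{lem lambda} delivers $\lambda_{i,d_0}\in\R^N$ with
\[
I'(u)h=\int_{B(x_i,\rho)}u^\delta h\,\bigl(\lambda_{i,d_0}\cdot(x-x_i)\bigr)\,dx\quad\forall\,h\in C_0^\infty(B(x_i,\rho)),
\]
which, since $u_i^\delta\equiv u^\delta$ on $B(x_i,\rho)$ by the emerging decomposition, means $-\Delta u-K u+u^{q-1}=\lambda_{i,d_0}\cdot(x-x_i)u_i^\delta$ weakly on $B(x_i,\rho)$. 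Because $u_i^\delta$ vanishes on the annulus $B(x_i,\rho)\setminus\overline{B(x_i,\rho-\sigma_0/2)}$, the two identities agree on their overlaps, so a partition-of-unity gluing produces the claimed single equation
\[
-\Delta u-K(x)u+u^{q-1}=\sum_{i=1}^k\lambda_{i,d_0}\cdot(x-x_i)u_i^\delta\qquad\text{in }A(\boldsymbol{x},d_0).
\]

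To bound $|\lambda_{i,d_0}|$, I would rewrite this equation on $B(x_i,\rho)$ as $-\Delta u-u+|u|^{q-2}u=\lambda_{i,d_0}\cdot(x-x_i-\beta_i(u))u_i^\delta+h$ with $h:=(1-K)u$, using $\beta_i(u)=0$ since $u\in S_{\boldsymbol{x},d_0}$. Lemma \ref{lem2-3} then gives
\[
|\lambda_{i,d_0}|\le C\bigl(\|u(\cdot+x_i)-w\|_{L^2(B(0,\rho))}+\|(1-K)u\|_{H^{-1}(B(x_i,\rho))}\bigr),
\]
provided $\|u(\cdot+x_i)-w\|_{L^2}\le\varepsilon_0$, which holds after possibly shrinking $\varepsilon_1$. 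The first term is $\le C\varepsilon_1$, and the second is controlled by $C\|K-1\|_{p,\mathrm{loc}}\|u\|_{H^1}\le C\alpha_2$ using the uniform $H^1$-bound from Proposition \ref{pro3.1}(i). Shrinking $\varepsilon_1,\sigma_2,\alpha_2$ once more forces $|\lambda_{i,d_0}|\le 1$. The concluding identity $\|I'(u)\|_{*,u}=0$ is then immediate from the definition of $\|\cdot\|_{*,u}$ as the infimum over Lagrange multipliers of $I'(u)-\sum_i\lambda_i(x-x_i)u_i^\delta$ in $H^{-1}$, since the displayed equation says this residual is literally zero.

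The only subtle point is uniformity in $k$ and $\boldsymbol{x}$, but since Proposition \ref{pro3.1} is formulated and proved with constants independent of $k_n$, this uniformity propagates automatically through the above contradiction argument and the pointwise-in-$i$ application of Lemmas \ref{lem lambda}, \ref{lem2.10}, and \ref{lem2-3}.
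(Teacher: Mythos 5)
Your proof is correct and follows essentially the same route the paper sketches (the paper gives only the one‑sentence hint ``by combining Lemma~\ref{lem2-3}, Lemma~\ref{lem2.10}, Lemma~\ref{lem lambda}, and Proposition~\ref{pro3.1}''): Proposition~\ref{pro3.1} via contradiction gives the $L^2$‑closeness, the uniform $H^1$ bound, and the tightened support $\supp u_i^\delta\subset B(x_i,\rho-\sigma_0/2)$; Lemma~\ref{lem2.10} and Lemma~\ref{lem lambda} then glue into the Euler--Lagrange identity on $\A{d_0}$; and Lemma~\ref{lem2-3} bounds the multipliers. One small imprecision: the phrase ``shrinking $\varepsilon_1$'' is misleading because $\varepsilon_1$ is fixed by Proposition~\ref{prop3.1}; what actually makes $|\lambda_{i,d_0}|\le 1$ is that Proposition~\ref{pro3.1}(ii) gives the $L^2$‑closeness as $o(1)$ (not merely $\le\varepsilon_1$) as $\sigma_2,\alpha_2,d_0\to 0$, and this $o(1)$ bound, together with $\|K-1\|_{p,\mathrm{loc}}\le\alpha_2$, is what the Lemma~\ref{lem2-3} estimate feeds on.
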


For the fixed $d_0$, if we  appropriately reduce $\sigma(\boldsymbol{x})$ and $\|K - 1\|_{p, loc}$,  we will obtain the following proposition.

\begin{proposition}\label{cor5.4}
There exist  $\sigma_3\in(0,\sigma_2)$ and $\alpha_3\in(0,\alpha_2)$,   such that
 for any $k \in \mathbb{N} \setminus \{0\}$, and any   $\boldsymbol{x} = \{x_i\}_{i=1}^k \in \mathcal{K}_k$ satisfying $\sigma(\boldsymbol{x}) \leq \sigma_3$, any $K$ with $\|K - 1\|_{p, loc} \leq \alpha_3$,  and any $u \in M_{\boldsymbol{x}, d_0}$, there holds,
\[
u = 0 \quad \text{in } \A {d_0} \setminus \A{d_0/2}, 
\]
In particular,
\begin{equation}\label{eq5.7}
-\Delta u - K(x)u + u^{q-1} = \sum_{i=1}^k \lambda_i \cdot (x - x_i) u_i^\delta \quad \text{in } \mathbb{R}^N,
\end{equation}
for some $\lambda_i \in \mathbb{R}^N$ with $|\lambda_i|\leq 1$, $i = 1, \cdots, k$.  
\end{proposition}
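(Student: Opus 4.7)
The strategy is to combine the local stability estimate of Proposition~\ref{prop3.1} with Corollary~\ref{cor3.2} to show that $u$ is pointwise very close to $v:=\sum_{i=1}^{k}w(\cdot-x_i)$ outside $\bigcup_i B(x_i,R^*)$, by a quantity controlled by $\sigma(\boldsymbol{x})+\|K-1\|_{p,loc}$. Since $v\equiv 0$ there, the support-propagation result of Lemma~\ref{R0} will then force $u$ to vanish in $A(\boldsymbol{x},d_0)\setminus A(\boldsymbol{x},d_0/2)$ once $\sigma_3$ and $\alpha_3$ are taken small enough. Throughout, Corollary~\ref{cor3.2} already gives that $u\in H_{\boldsymbol{x},d_0}$, that $\supp u_i^\delta\subset B(x_i,\rho-\sigma_0/2)$, that $\max_j\|u-w(\cdot-x_j)\|_{L^2(B(x_j,R^*))}\leq\varepsilon_1$, and that $\|I'(u)\|_{*,u}=0$.

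First I would derive a uniform bound on $\|u-v\|_{\boldsymbol{x},d_0}$. Using the inequality $\|\cdot\|_{*,u}\leq\|\cdot\|_{*,\boldsymbol{x},d_0}$ together with $\|I'(u)\|_{*,u}=0$, the triangle inequality gives
\[
\|(I^\infty)'(u)-(I^\infty)'(v)\|_{*,u}\leq \|(I^\infty)'(u)-I'(u)\|_{*,\boldsymbol{x},d_0}+\|(I^\infty)'(v)\|_{*,\boldsymbol{x},d_0}.
\]
The first term on the right is bounded by $C\|K-1\|_{p,loc}\|u\|_{\boldsymbol{x},d_0}$ via \eqref{K-1}, and the second by $C\sigma(\boldsymbol{x})$ via \eqref{Iinfw}. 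A uniform bound $\|u\|_{\boldsymbol{x},d_0}\leq C$, independent of $k$, $\boldsymbol{x}$ and $K$, follows from the $L^2$-closeness delivered by Corollary~\ref{cor3.2} combined with a standard elliptic $W^{2,p}$ estimate on each ball $B(x_j,R^*+d_0)$ applied to the equation of that corollary. Feeding these bounds into Proposition~\ref{prop3.1} yields
\[
\|u-v\|_{\boldsymbol{x},d_0}\leq C\bigl(\sigma(\boldsymbol{x})+\|K-1\|_{p,loc}\bigr).
\]

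Next I would upgrade this $H^1$ closeness to a pointwise bound outside the core balls. Lemma~\ref{lem regu} applied with $R=R^*$ gives
\[
\|u-v\|_{L^\infty(A(\boldsymbol{x},d_0)\setminus\bigcup_i B(x_i,R^*))}\leq C\bigl(\sigma(\boldsymbol{x})+\|K-1\|_{p,loc}\bigr)=:\kappa,
\]
and since $v\equiv 0$ on this region, this reads $0\leq u\leq\kappa$ there. Extending $u$ by zero to all of $\mathbb{R}^N$ and invoking the nonpositivity of the outward normal derivative on $\partial A(\boldsymbol{x},d_0)$ established in Remark~\ref{rek4.2}(ii), the extension weakly satisfies $-\Delta u-a_1u+u^{q-1}\leq 0$ in $\mathbb{R}^N\setminus\bigcup_i B(x_i,R^*)$ with $u\leq\kappa$ there. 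Choosing $\sigma_3,\alpha_3$ so small that $\kappa\leq\delta$ and $\kappa^{(2-q)/3}<d_0/2$, Lemma~\ref{R0} then gives $\supp u\subset\bigcup_i\overline{B(x_i,R^*+\kappa^{(2-q)/3})}\subset A(\boldsymbol{x},d_0/2)$. Hence $u\equiv 0$ in $A(\boldsymbol{x},d_0)\setminus A(\boldsymbol{x},d_0/2)$, and the equation of Corollary~\ref{cor3.2}, originally valid only in $A(\boldsymbol{x},d_0)$, extends trivially to all of $\mathbb{R}^N$, because both sides of \eqref{eq5.7} vanish on $\mathbb{R}^N\setminus A(\boldsymbol{x},d_0/2)$ (note $\supp u_i^\delta\subset B(x_i,\rho-\sigma_0/2)$). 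The bound $|\lambda_i|\leq 1$ is inherited directly from Corollary~\ref{cor3.2}.

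The hardest part of this plan is the $k$-uniformity of all the constants involved. Ensuring that the a priori bound $\|u\|_{\boldsymbol{x},d_0}\leq C$ and the constants appearing in \eqref{K-1}, \eqref{Iinfw}, Proposition~\ref{prop3.1} and Lemma~\ref{lem regu} are simultaneously independent of the number of bumps is exactly what the region-wise maximum norms introduced in Section~3 were designed to deliver; this uniformity is what allows a single pair $(\sigma_3,\alpha_3)$ to work for every $k\in\mathbb{N}\setminus\{0\}$ at once, and it is the crucial ingredient that will, in the next section, permit the passage to configurations with infinitely many bumps.
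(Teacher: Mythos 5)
Your proof is correct and follows essentially the same route as the paper's: derive $\|u-v\|_{\boldsymbol{x},d_0}\leq C(\sigma(\boldsymbol{x})+\|K-1\|_{p,loc})$ from Proposition~\ref{prop3.1}, Corollary~\ref{cor3.2}, \eqref{K-1}, and \eqref{Iinfw}; upgrade to an $L^\infty$ bound outside the core balls via Lemma~\ref{lem regu}; then invoke the comparison Lemma~\ref{R0} (via the zero extension and the normal-derivative sign argument of Remark~\ref{rek4.2}) to confine the support inside $A(\boldsymbol{x},d_0/2)$ once $\sigma_3,\alpha_3$ are small. You are somewhat more explicit than the paper about where the $k$-uniform bound $\|u\|_{\boldsymbol{x},d_0}\leq C$ (needed for \eqref{K-1}) comes from: the paper simply cites Proposition~\ref{pro3.1}(i) for this, whereas you sketch a direct $W^{2,p}$ elliptic route from the Euler--Lagrange equation of Corollary~\ref{cor3.2}; both are acceptable, and your remarks about region-wise uniformity match the paper's design intent.
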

\begin{proof}   Set 
  \[
  v=\sum_{j= 1}^k  w(\cdot-x_j).
  \]
According to Proposition \ref{pro3.1}, Corollary \ref{cor3.2}, Proposition \ref{prop3.1}, \eqref{K-1} and \eqref{Iinfw},
for some constant $C>0$, it holds 
\begin{equation}\label{equ-v}
   \begin{aligned}   \|u-v\|_{\boldsymbol x, d_0}
   	\leq & C \|(I^\infty)'(u)-(I^\infty)'(v)\|_{*,u} \\
   	\leq & C  \|I'(u)\|_{*,u}+\|(I^\infty)'(u)-I'(u)\|_{*,u}+ \|(I^\infty)'(v)\|_{*,u}\\
    \leq  & C \|K - 1\|_{p, loc} + C\sigma(\boldsymbol{x}).
  \end{aligned} 
\end{equation}
Then Lemma \ref{lem regu}    implies 
 \[ \|u-v\|_{L^\infty(\A{d_0}\setminus \A 0)}\leq  C(  \|K - 1\|_{p, loc} +  \sigma(\boldsymbol{x})).
 \]
According to   Lemma \ref{R0}, $\supp u\subset \A {C(  \|K - 1\|_{p, loc} +  \sigma(\boldsymbol{x}))^\frac{2-q}{3}}$.
Then  we can  choose $\alpha_3,\sigma_3$ smaller if necessary to obtain the conclusion.
\end{proof}
 
\begin{remark}\label{rk5.4}
Suppose that
   $\boldsymbol{x}_n=\{x_i^n\}_{i=1}^{k_n}\in \mathcal K_{k_n}$  satisfy $\sigma(\boldsymbol{x}_n)\to 0$, $K_n$ satisfy $\|K_n-1\|_{p,loc}\to 0$. Then from \eqref{equ-v}, there holds 
   $\|u_n-\sum_{i=1}^{k_n} w(\cdot-x_i^n)\|_{H^1_{loc}}\to 0$ for $u_n\in M_{\boldsymbol{x}_n, d_0}$.
 By \eqref{eq5.7} and
 standard elliptic regularity theory,  we get  $\|u_n \|_{C^\beta(\R^N)}\leq C$,  and hence, by the Arzel\`a--Ascoli theorem and Lemma \ref{lem regu}, we have
  \[ 
   \|u_n-\sum_{i=1}^{k_n} w(\cdot-x_i^n)\|_{L^\infty(\R^N)}+\|u_n-\sum_{i=1}^{k_n} w(\cdot-x_i^n)\|_{H^1_{loc}}\to 0.
  \]
\end{remark}

\subsection{Proof of existence of infinitely many solutions }
The following Proposition states that if $\mu_{d}(\boldsymbol{x})=\mu_{k,d}$, then $\sigma(\boldsymbol{x})\to 0$  as $\|K(x)-1\|_{p,loc}\to0$.
\begin{proposition}\label{pro 3.1}
  For any $\sigma >0$, there is $\alpha_\sigma>0$ such that if
   $\|K(x)-1\|_{p,loc}\leq \alpha_\sigma$, 
  then for any $d\geq 0$, $k\geq 1$,   any $\boldsymbol{x}\in \mathcal K_k$ satisfying $\mu_{d}(\boldsymbol{x})=\mu_{k,d}$ 
 obeys
 $\sigma(\boldsymbol{x})\leq \sigma$.
\end{proposition}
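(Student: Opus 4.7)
\noindent I would argue by contradiction. Suppose there exist $\sigma>0$, $(K_n)$ with $\|K_n-1\|_{p,\mathrm{loc}}\to 0$, $d_n\ge 0$, $k_n\ge 1$, and $\boldsymbol{x}_n=\{x_i^n\}_{i=1}^{k_n}\in\mathcal{K}_{k_n}$ attaining $\mu_{d_n}^n(\boldsymbol{x}_n)=\mu_{k_n,d_n}^n$ with $\sigma(\boldsymbol{x}_n)>\sigma$. Then $k_n\ge 2$, and there is a close pair $x_{i_n}^n,x_{j_n}^n$ with $|x_{i_n}^n-x_{j_n}^n|\le 2R^*-\sigma$. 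Using (K4), I would pick $y_n$ with $|y_n|$ so large that $K_n<1$ on $B(y_n,R_0)$ and $|y_n-x_\ell^n|\ge 2R_0$ for every $\ell$, and set $\boldsymbol{x}_n':=(\boldsymbol{x}_n\setminus\{x_{i_n}^n\})\cup\{y_n\}\in\mathcal{K}_{k_n}$. The equality case of Corollary~\ref{cor413} gives
\[
\mu_{d_n}^n(\boldsymbol{x}_n')=\mu_{d_n}^n(\boldsymbol{x}_n\setminus\{x_{i_n}^n\})+\mu_{d_n}^n(y_n),
\]
and the plan is to show $\mu_{d_n}^n(\boldsymbol{x}_n')>\mu_{d_n}^n(\boldsymbol{x}_n)$ for large $n$, contradicting optimality.

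The central ingredient is a \emph{quantitative strict subadditivity}: there exists $\tau=\tau(\sigma)>0$, independent of $n$, such that
\begin{equation*}
\mu_{d_n}^n(\boldsymbol{x}_n)\le \mu_{d_n}^n(\boldsymbol{x}_n\setminus\{x_{i_n}^n\})+\mu_{d_n}^n(x_{i_n}^n)-\tau. \qquad (\star)
\end{equation*}
Taking minimizers $v_1\in M_{\boldsymbol{x}_n\setminus\{x_{i_n}^n\},d_n}^n$ and $v_2\in M_{x_{i_n}^n,d_n}^n$ from Proposition~\ref{pro2.5}, their maximum lies in $S_{\boldsymbol{x}_n,d_n}^n$ (as in the proof of Corollary~\ref{cor413}), yielding
\[
\mu_{d_n}^n(\boldsymbol{x}_n)\le I_n(v_1)+I_n(v_2)-I_n(v_1\wedge v_2).
\]
Because the emerging cores of $v_1,v_2$ are confined to the pairwise disjoint balls $B(x_\ell^n,\rho)$, one has $v_1\wedge v_2\le\delta$ pointwise, so $I_n(v_1\wedge v_2)>0$ by the paper's choice of $\delta$. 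The heart of the proof is the uniform lower bound $I_n(v_1\wedge v_2)\ge\tau(\sigma)>0$. I would argue that, as $\|K_n-1\|_{p,\mathrm{loc}}\to 0$, the single-bump minimizer $v_2$ converges to $w(\cdot-x_{i_n}^n)$ uniformly on $B(x_{i_n}^n,R_0)$ (the single-bump specialization of the compactness analysis in Proposition~\ref{pro3.1} and Remark~\ref{rk5.4}), while $v_1$ retains a positive bump near $x_{j_n}^n$; using the boundary profile $w(x)\sim C_q(R^*-|x|)_+^{2/(2-q)}$ from Lemma~\ref{lemmaR*}(iii) and the fact that $B(x_{i_n}^n,R^*)\cap B(x_{j_n}^n,R^*)$ has transverse width $\ge\sigma$, a direct overlap-volume computation yields a strictly positive lower bound on $I_n(v_1\wedge v_2)$ depending only on $\sigma,q,N$.

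Combining $(\star)$ with the swap identity,
\[
\mu_{d_n}^n(\boldsymbol{x}_n')-\mu_{d_n}^n(\boldsymbol{x}_n)\ge \mu_{d_n}^n(y_n)-\mu_{d_n}^n(x_{i_n}^n)+\tau.
\]
To close, I would establish that $\mu_{d_n}^n(x)\to m_0$ uniformly in $x\in\mathbb{R}^N$ as $\|K_n-1\|_{p,\mathrm{loc}}\to 0$: on bounded families one has $|I_n(u)-I^\infty(u)|\lesssim\|K_n-1\|_{p,\mathrm{loc}}$, while a Nehari renormalization $u\mapsto u_\delta+t_n u^\delta$ with $t_n\to 1$ transports admissible functions between $S_{x,d}^n$ and $S_{x,d}^\infty$ at cost $o_n(1)$, which combined with Lemma~\ref{lem414} gives the claim. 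Hence $\mu_{d_n}^n(y_n)-\mu_{d_n}^n(x_{i_n}^n)=o_n(1)$, so the right-hand side exceeds $\tau/2>0$ for large $n$, producing the contradiction.

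The main obstacle is the quantitative overlap-energy lower bound in $(\star)$: producing a bound on $I_n(v_1\wedge v_2)$ that is uniform in $n$, $k_n$, and $d_n$, and depends only on $\sigma$ (and $q,N$). This requires pairing a uniform single-bump stability statement (for $v_2$ as $\|K_n-1\|_{p,\mathrm{loc}}\to 0$) with a careful computation of the ground-state overlap energy using the boundary asymptotic in Lemma~\ref{lemmaR*}(iii). All remaining ingredients are direct consequences of results already established in the paper or standard perturbation estimates.
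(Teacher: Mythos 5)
Your overall strategy — contradiction via quantitative strict subadditivity plus an overlap–energy lower bound — is the same as the paper's. The swap endgame you propose (exchanging $x_{i_n}^n$ for a distant $y_n$, invoking the equality case of Corollary~\ref{cor413}, and using that $\mu_{d_n}^n(\cdot)\to m_0$ uniformly) is a perfectly good alternative to the paper's direct appeal to Proposition~\ref{muy}(iii), which states $\mu_{k,d_n}>\mu_{k-2,d_n}+2m_0$ and so closes the argument without any swap. Both conclusions are sound once $(\star)$ is in hand.

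The genuine gap is exactly where you flagged it: the uniform lower bound $I_n(v_1\wedge v_2)\ge\tau(\sigma)$. You take $v_1\in M^n_{\boldsymbol{x}_n\setminus\{x_{i_n}^n\},d_n}$ to be a \emph{minimizer} on the remaining $k_n-1$ bumps, and to lower-bound the overlap energy you need $v_1$ to be quantitatively positive in the lens $B(x_{i_n}^n,R^*)\cap B(x_{j_n}^n,R^*)$. The tools in the paper that would give such profile control on $v_1$ near $x_{j_n}^n$ (Proposition~\ref{pro3.1}, Remark~\ref{rk5.4}, Proposition~\ref{prop3.1}) all require $\sigma(\boldsymbol{x}_n\setminus\{x_{i_n}^n\})\to 0$, and that is precisely what you \emph{cannot} assume here: the contradiction hypothesis is that $\sigma(\boldsymbol{x}_n)>\sigma$ is bounded away from zero, and removing one point of a close pair does not remove all close pairs. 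Lemma~\ref{lem2.3} only controls the total $L^q$ mass of $(v_1)^\delta_{j_n}$, not where it sits relative to the lens, and the lens lies outside $B(x_{j_n}^n,\rho)$ where $v_1<\delta$ and obeys no useful lower bound (indeed $v_1$ is forced to vanish on $\partial B(x_{j_n}^n,R_0)$ by Lemma~\ref{R0}, so no Harnack-type bound up to the boundary is available). So $I_n(v_1\wedge v_2)$ could, a priori, be arbitrarily small.

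The paper avoids this by not using $v_1$ at all on the two close bumps. It takes the explicit Nehari-rescaled translates $w_{n,i}=(w_{x_i^n})_\delta+t_{n,i}(w_{x_i^n})^\delta\in S^n_{x_i^n}$, $i=1,2$ (with $t_{n,i}\to1$), keeps the minimizer only on the remaining $k_n-2$ bumps, and compares $\mu_{k_n,d_n}$ against $I_n(w_{n,1}\vee w_{n,2}\vee v_n)$. The overlap energy $I_n(w_{n,1}\wedge w_{n,2})$ is then an overlap of two \emph{explicit}, nearly translated ground states, for which the lens computation using Lemma~\ref{lemmaR*}(iii) gives the uniform bound $\ge 2c_0(\sigma)>0$ with no stability input. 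If you replace the part of $v_1$ near $x_{j_n}^n$ by $w_{n,j_n}$ in your $(\star)$ estimate, the gap closes and your argument becomes essentially the paper's; as written, the step you identified as "the main obstacle" is indeed missing, and it is the crux of the proposition.
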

\begin{proof} The case $k=1$ is trivial since $\sigma(\boldsymbol{x})=0$  by definition. Thus, we may assume  $k\geq 2$.
Suppose,  for contradiction, that $\|K_n(x)-1\|_{p,loc}\to 0$, and there exist  
$\boldsymbol x_n=\{x_{i}^n\}_{i=1}^k\in\mathcal K_{k_n}$ satisfying
$\mu_{d_n}(\boldsymbol x_n )=\mu_{k_n,d_n} $ but  
$|x_1^n-x_2^n|\leq 2R^*-\sigma$ for some constant $\sigma>0$. 
Then there are $t_{n,i}>0$ such that
$w_{n,i}=(w_{x_i^n})_\delta+t_{n,i}(w_{x_i^n})^\delta\in S_{x_i^n}^n$, $i=1,2$. 
Set $w_n:=w_{n,1}\vee w_{n,2}$,
and take $v_n\in M_{x_3^n,\dots,x_{k_n}^n,d_n}^n$.
Easily to see, $w_n\vee v_n\in S_{x_1^n,\dots,x_{k_n}^n,d_n}^n$, and $t_{n,i}\to 1$. Hence we have
\begin{equation}\label{eq 17}
\begin{aligned}
\mu_{k_n,d_n}
\leq I_n(w_n\vee v_n)
&\leq I_n(w_n)+I_n(v_n)\\
&\leq I_n(w_{n,1})+I_n(w_{n,2})
-I_n(w_{n,1}\wedge w_{n,2})
+\mu_{k_n-2,d_n}\\
&= 2m_0+\mu_{k_n-2}-I_n(w_{n,1}\wedge w_{n,2})+o_n(1).
\end{aligned}
\end{equation} 
Since $|x_1^n-x_2^n|\leq 2R^*-\sigma$, we have
$I_n(w_{n,1}\wedge w_{n,2})\geq 2c_0>0$. Taking this into \eqref{eq 17}, we deduce that, for large $n$,
$$\mu_{k_n}
\leq 2m_0+\mu_{k_n-2}-c_0,$$
which contradicts to Proposition \ref{muy}.
\end{proof}

\begin{proof}[Proof of Theorem 1]
By Proposition \ref{pro 3.1}, for $d=d_0$ given in Proposition \ref{cor5.4}, 
if $\|K-1\|_{p,loc}\leq \alpha_0:=\min\{ \alpha_3, \alpha_{\sigma_3}\}$, then for 
any $u\in M_{\boldsymbol{x}, d}$ with $\mu_d(\boldsymbol{x})=\mu_{k,d}$, we have $\sigma(\boldsymbol{x})\leq \sigma_3$. So by Proposition \ref{cor5.4}, $u$ satisfies
\[
-\Delta u-K(x)u+ u^{q-1}
=\sum_{i=1}^{k}\lambda_i\cdot(x-x_i)u_i^\delta\ \ \text{in }\R^N.
\]
We show that $\lambda_i=0$ when $\|K-1\|_{p,loc}$ is small enough.
Assume by contradiction that there exist $\boldsymbol{x}_n=\{x_i^n\}_{i=1}^{k_n}$, $K_n$, $u_{n}\in M^n_{\boldsymbol x_n, d}$, and 
$\lambda_i^n\in\R^N$, $i=1,\cdots, k_n$ such that 
$\|K_n-1\|_{p,loc}\to 0, I (u_n)=\mu^n_{k_n,d},$
and 
\begin{equation}\label{eq55}
 -\Delta u_n-K_n u_n+u_n^{q-1}
=\sum_{i=1}^{k_n}\lambda_i^n\cdot(x-x_i^n)(u_n)_i^\delta  \quad  \text{in }\R^N,
\end{equation}
 but   $\max_{1\leq i\leq k_n}|\lambda_i^n|\neq 0$. 
Assume without loss of generality that $$|\lambda_1^n|=\max_{1\leq i\leq k_n}|\lambda_i^n|.$$
By Lemma \ref{lem2-3} and Remark \ref{rk5.4}, we know 
\[
|\lambda_1^n|\to 0.
\]
Define $\boldsymbol y_n:=\{y_i^n\}_{i=1}^{k_n}$ such that
$$ y_1^n= x_1^n+ \frac{\lambda_1^n}{nk_n}, \text{ and }
y_{i}^n= x_i^n, i=2,\cdots,k_n.
$$ 
Let $v_n\in M_{\boldsymbol y_n, d}^n$. Then we have
\begin{equation}\label{eq53}
I_n(v_n)\leq I_n(u_n).
\end{equation}
In the following we will prove $I_n(v_n)>I_n(u_n)$ for large $n$, which gives a contradiction to \eqref{eq53}.

According to Proposition \ref{cor5.4}, since $\sigma(\boldsymbol{y}_n)\to 0$,
we have for large $n$,
\[
   -\Delta v_n-K_n v_n+v_n^{q-1}
=\sum_{i=1}^{k_n}\bar\lambda_i^n\cdot(x-y_i^n)(v_n)_i^\delta\quad \text{in }\R^N.  
\]
By Remark \ref{rk5.4}, we have
\begin{equation}\label{e65}
\|v_n-\sum_{i=1}^{k_n} w(\cdot-x_i^n)\|_{L^\infty(\RN)}+\|v_n-\sum_{i=1}^{k_n} w(\cdot-x_i^n)\|_{H^1_{loc}}=o_n(1).
\end{equation} 
Let $h_n, \bar h_n\in H^{-1}(A(\boldsymbol x_n,  d_n))$ be such that $\langle h_n,\varphi \rangle=\int (K_n-1)u_n\varphi$ and $\langle \bar h_n,\varphi \rangle=\int (K_n-1)v_n\varphi$ for any $\varphi\in H_0^1(A(\boldsymbol x_n,  d_n))$.  
By 
  Proposition \ref{pro.lim}, we have
  \[ 
  \|u_n - v_n\|_{\boldsymbol x_n, d} 
  \leq C (\|K_n-1\|_{p,loc}\|u_n-v_n\|_{\boldsymbol x_n, d} +\max_{1\leq j\leq k_n}|\beta_{j}(u_n)-\beta_{j}(v_n)|).
  \]
  Since $\max_{1\leq j\leq k_n}|\beta_{j}(u_n)-\beta_{j}(v_n)|= |\lambda_1^n|/(nk_n)$ and 
  $\|K_n-1\|_{p,loc}\to 0$, we know 
  \[
  \|u_n - v_n\|_{\boldsymbol x_n, d} \leq C  \frac{|\lambda_1^n|}{nk_n}.
  \]
 So by $\frac1q v_n^q-\frac1q u_n^q-u_n^{q-1}(v_n-u_n)\geq 0$, we have
\[ 
\begin{aligned}
&I_n(v_n)-I_n(u_n)\\
=&I_n'(u_n)(v_n-u_n)
 +\int\left[\frac12|\nabla v_n-\nabla u_n|^2-\frac12K_n|v_n-u_n|^2
+\frac1q v_n^q-\frac1q u_n^q-u_n^{q-1}(v_n-u_n)\right]\\
\geq &I_n'(u_n)(v_n-u_n)-\int\frac12K_n|v_n-u_n|^2\\
=&I_n'(u_n)(v_n-u_n)+O(\frac {|\lambda_1^n|^2}{n^2k_n}).
\end{aligned}
\]
By \eqref{eq55} and Lemma \ref{lem2-4},
$$
\begin{aligned}
I_n'(u_n)(v_n-u_n)= & \sum_{i=1}^{k_n}\int_{B(x_i^n,\rho)}
u_n^\delta(v_n-u_n)(\lambda_i^n\cdot (x-x_i^n)) \\
=&\sum_{i=1}^{k_n} \lambda_i^n \left(\beta_i(v_n)\|(v_n)_i^\delta\|_{L^2}^2-\beta_i(u_n)\|(u_n)_i^\delta\|_{L^2}^2\right) 
+ O( \sum_{i=1}^{k_n}\|u_n-v_n\|_{L^2(B(x_i^n,\rho))}^2)\\
=&\lambda_1^n  \beta_1(v_n)\|(v_n)_1^\delta\|_{L^2}^2 
+O(\frac {|\lambda_1^n|^2}{n^2k_n}) \\
=&  \frac{|\lambda_1^n|^2}{nk_n}\left(\|(v_n)_1^\delta\|_{L^2}^2 
+O(\frac1n)\right).
\end{aligned}$$
     By \eqref{e65}, we know $\|(v_n)_1^\delta\|_{L^2}^2=\|w^\delta\|_{L^2}+o(1)$.
     Therefore, $I_n(v_n)>I_n(u_n)$ for large $n$.
\end{proof}

\section{Solutions with infinity bumps}
In this section, we will construct a solution to equation \eqref{1.1} possessing infinitely many bumps, thereby proving Theorem \ref{them2}.
Starting from the sequence of finite-bump solutions $\{u_k\}$ constructed in previous sections, we first observe that for any $j \in \mathbb{N}$, the number of bumps of $u_k$ contained in $B(0,R_j)$ exceeds $j$ for sufficiently large $k$. This key density property allows us to employ a diagonal selection argument, extracting a subsequence whose bump centers stabilize within every ball $B(0,R_j)$. The resulting subsequence converges locally uniformly to a limit function $v$, which inherits the same multi-bump structure: $v$ possesses at least $j$ bumps in each ball $B(0,R_j)$. Standard compactness arguments combined with elliptic regularity theory ensure that $v$ solves the equation \eqref{1.1}. 

\ \ \ \ \ \ 

According to the discussions in the Section 1--5, for
$d=d_0$, there is a small constant $\alpha'>0$, such that
if $\|K-1\|_{p,loc}\leq\alpha'$, then for any $k \in \mathbb{N} \setminus \{0\}$,
any $u\in M_{\boldsymbol{x}, d}$ with $\mu_d(\boldsymbol{x})=\mu_{k,d}$, we have
\begin{equation}\label{ead}
\sigma(\boldsymbol{x})\leq \sigma_3,\ \ \ \
\|K-1\|_{p,loc}\leq\alpha_0,
\end{equation}
where $d_0, \sigma_3,\alpha_0$ are given in Proposition \ref{cor5.4}. Moreover, $u$ satisfies 
\begin{equation*} 
-\Delta u-K(x)u+ u^{q-1}
=0\ \ \text{in }\R^N,\ \ \ \
I(u)=\mu_{k,d}=\mu_d(\boldsymbol{x}).
\end{equation*}

For any $R>0$, we denote by $\tau(u,R)$ the number of points around which $u$ is emerging and that are contained in $B(0,R)$. Then we have the following lemma.
\begin{lemma}\label{lem6.1}
Let  $\boldsymbol x\in \mathcal K_k$ and
$u_k\in M_{\boldsymbol{x}, d}$ satisfy $\mu_d(\boldsymbol{x})=\mu_{k,d}$. Then  for each $j$, there are $R_j>0$ and $k_j\in\mathbb N\setminus\{0\}$ such that
$\tau(u_k,R_j)\geq j$ for all $k\geq k_j$.
\end{lemma}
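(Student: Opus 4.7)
The plan is to argue by induction on $j$. In both the base case and the inductive step I will use the same \emph{swap argument}: if the optimal configuration $\boldsymbol{x}_k=\{x_i^k\}_{i=1}^k$ attaining $\mu_d(\boldsymbol{x}_k)=\mu_{k,d}$ has too few points near the origin, I will produce a competitor $\boldsymbol{y}_k\in\mathcal{K}_k$ of strictly larger energy by removing one distant point of $\boldsymbol{x}_k$ and inserting a new point at a location where $\mu_d$ exceeds $m_0$ by a fixed positive gap. This contradicts the definition of $\mu_{k,d}$ as a supremum. The two main ingredients are Corollary \ref{cor413} (both its unconditional upper bound and its equality case under $2R_0$ separation) and Proposition \ref{muy}, especially parts (i), (ii), and (iv).

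For the base case $j=1$, I suppose the conclusion fails and extract by a diagonal argument a sequence $k_n\to\infty$ with every point of $\boldsymbol{x}_{k_n}$ lying outside $B(0,n)$. Fixing $y^*$ with $\mu_d(y^*)=\mu_{1,d}>m_0$ (Proposition \ref{muy}(iii)--(iv)) and picking any $x^n_*\in\boldsymbol{x}_{k_n}$, I form the swap $\boldsymbol{y}_{k_n}:=(\boldsymbol{x}_{k_n}\setminus\{x^n_*\})\cup\{y^*\}$. For $n$ large every retained point $x$ satisfies $|y^*-x|>2R_0$, so Corollary \ref{cor413} applies with equality to $\{y^*\}$ versus the rest, and combining this with the always-valid upper bound of the same corollary applied to $\{x^n_*\}$ versus the rest yields
\[
\mu_d(\boldsymbol{y}_{k_n})-\mu_d(\boldsymbol{x}_{k_n})\geq\mu_d(y^*)-\mu_d(x^n_*)=\mu_{1,d}-\mu_d(x^n_*).
\]
Since $|x^n_*|>n\to\infty$, Proposition \ref{muy}(ii) forces $\mu_d(x^n_*)\to m_0<\mu_{1,d}$, so the right-hand side is positive for large $n$, contradicting $\mu_d(\boldsymbol{x}_{k_n})=\mu_{k_n,d}$.

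For the inductive step I assume that suitable $R_{j-1}$ and $k_{j-1}$ exist and, without loss of generality, enlarge $R_{j-1}$ so that $R_{j-1}>R_0+a_2(K)$. If the conclusion for $j$ fails, a diagonal extraction supplies $k_n\to\infty$ with $\tau(u_{k_n},n)<j$; the inductive hypothesis combined with the monotonicity of $\tau(u_k,\cdot)$ forces, once $n\geq R_{j-1}$ and $k_n\geq k_{j-1}$, that $\boldsymbol{x}_{k_n}$ consists of exactly $j-1$ points lying in $B(0,R_{j-1})$ together with $k_n-j+1$ points outside $B(0,n)$. On the closed set $\{|y|\geq R_{j-1}+2R_0\}$, Proposition \ref{muy}(i) gives $\mu_d>m_0$ while Proposition \ref{muy}(ii) gives $\mu_d\to m_0$ at infinity, so the continuous function $\mu_d-m_0$ attains a strictly positive maximum $\eta$ at some point $y^{**}$ in that set. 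I repeat the swap with $y^{**}$ replacing $y^*$ and with $x^n_*$ chosen as one of the outside points; the bound $|y^{**}|>R_{j-1}+2R_0$ forces separation $>2R_0$ from every inside point of $\boldsymbol{x}_{k_n}$, and taking $n$ larger than $|y^{**}|+2R_0$ forces the same separation from every retained outside point. Corollary \ref{cor413} then delivers $\mu_d(\boldsymbol{y}_{k_n})-\mu_d(\boldsymbol{x}_{k_n})\geq\eta-o(1)>0$ for large $n$, again contradicting $\mu_d(\boldsymbol{x}_{k_n})=\mu_{k_n,d}$.

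The main obstacle will be securing the admissibility of the inserted point $y^{**}$, namely that $y^{**}$ lies at distance at least $2R_0$ from every retained point of $\boldsymbol{x}_{k_n}$, since this is precisely the hypothesis required to invoke the equality case of Corollary \ref{cor413} and obtain a clean energy decomposition. The uniform bound $R_{j-1}$ on the reach of the inside bumps, supplied by the inductive hypothesis, together with Proposition \ref{muy}(i)--(ii) applied on the complement of a slightly enlarged ball, is exactly what furnishes a point $y^{**}$ admissible against the inside bumps and carrying a quantitative positive gap $\eta>0$; separation from the outside bumps is then free because the diagonal hypothesis places those bumps arbitrarily far from the origin.
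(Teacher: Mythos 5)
Your proof is correct and shares the same engine as the paper's: swap a center of the optimal configuration that sits far from the origin for a separated admissible point $y$ with $\mu_d(y) > m_0$, and use Corollary \ref{cor413} (both the subadditivity and the equality case) together with Proposition \ref{muy}(i)--(ii) to derive $\mu_d(\boldsymbol y_{k_n}) > \mu_d(\boldsymbol x_{k_n}) = \mu_{k_n,d}$, a contradiction. The only genuine difference is organizational: the paper runs a single contradiction argument for an arbitrary threshold $m$, extracting a subsequence in which at most $m-1$ centers converge while the remaining centers escape uniformly to infinity, and then swaps once; you instead induct on $j$, replaying the same swap at each level at the cost of extra bookkeeping but gaining a slightly sharper geometric picture (exactly $j-1$ centers trapped in $B(0,R_{j-1})$, the rest outside $B(0,n)$). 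Your use of the maximizers $y^*$ and $y^{**}$ is correct but unnecessary --- any fixed $y$ with $|y| \geq R_0 + a_2(K)$ separated from all retained centers already has $\mu_d(y) > m_0$ by Proposition \ref{muy}(i), which is all the strict inequality requires.
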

\begin{proof}
We assume by contradiction that there exist $m\in\mathbb N\setminus\{0\}$ and sequences $\{R_n\}\subset \mathbb{R}^+ \setminus \{0\},\ \{k_n\} \subset \mathbb{N}$, such that $R_n \to +\infty, k_n \to +\infty$ as $n \to \infty$, but $\tau(u_{k_n}, R_n)< m$ for $u_{k_n}\in M_{\boldsymbol{x}_{k_n}, d}$ with $\boldsymbol x_{k_n}\in \mathcal K_{k_n}$.

We denote by $\boldsymbol x_{k_n}=\{x_i^n\}_{i=1}^{k_n}$ the points around which $u_{k_n}$ are emerging. 
Up to a subsequence, we assume that for $j <m$
\[
x_i^n\to x_i, \text{ for } i\leq j, \quad \min_{j<i\leq k_n}|x_i^n| \to +\infty.
\]
Then    there is  $y\in \R^N\setminus B(0,R_0+a_2(K))$ satisfying $\min_{1\leq i\leq k_n} |y-x_i^n|\geq 2R_0$ for large $n$.
 By Corollary \ref{cor413}, we have 
 \begin{gather*}
 	\mu_d(\boldsymbol{x}_{k_n})\leq\mu_d(\boldsymbol{y}_n)+\mu_d(x_{k_n}^n),\ 
 \mu_d(\boldsymbol{y}_n\cup\{y\})=\mu_d(\boldsymbol{y}_n)+\mu_d(y),\quad \text{where } \boldsymbol{y}_n= \{x_i^n\}_{i\neq k_n}\in\mathcal K_{k_n-1}.
 \end{gather*}
  By Proposition \ref{muy} and 
  $|x_{k_n}^n|\geq R_n\to+\infty$, there holds for large $n$ that
\[
 \mu_d(\boldsymbol{x}_{k_n})-\mu_d(\boldsymbol{y}_n)
  \leq \mu_d(x_{k_n}^n)= m_0+o_n(1)<\mu_d(y).
\]
This gives a contradiction that for large $n$,
\[
\mu_d(\boldsymbol{y}_n\cup\{y\})\leq \mu_{k_n,d}=\mu_d(\boldsymbol{x}_{k_n})< \mu_d(\boldsymbol{y}_n)+\mu_d(y)=\mu_d(\boldsymbol{y}_n\cup\{y\}). \qedhere
\]
\end{proof}
 
Now we are ready to find a solution with infinitely many bumps.

\begin{proof}[Proof of Theorem \ref{them2}] For $k\geq 1$,
let 
$u_k\in M_{\boldsymbol{x}, d}$ with $\mu_d(\boldsymbol{x})=\mu_{k,d}$.
According to Lemma \ref{lem6.1},
 for all $j\in\mathbb{N}$, $R_{j}\in\mathbb{R}^{+}\setminus\{0\}$ and $k_{j}\in\mathbb{N}$ exist so that $\tau(u_{k},R_{j})\geq j$, for all $k>k_{j}$. On the other hand, since the points around which any $u_{k}$ is emerging have interdistances greater or equal than $2R^*-\sigma_0$, a number $L_j\geq j$ exists so that $\tau(u_{k},R_{j})\leq  L_j$, for all $k\in\mathbb{N}$. Thus, $\lim_{j\to+\infty}R_{j}=+\infty$ and, without any loss of generality, we  assume $R_{j}< R_{j+1}$, for all $j\in\mathbb{N}$.

Therefore, we can choose, for all $j$, a subsequence $(u_{k_{n}}^{j})_{n}$ of $(u_{k})_{k}$ in such a way that
\begin{align*}
&\forall j\in\mathbb{N},\quad (u_{k_{n}}^{j+1})_{n} \text{ is a subsequence of } (u_{k_{n}}^{j})_{n}, \\
&\forall j\in\mathbb{N},\ \forall n\in\mathbb{N},\quad \tau(u_{k_{n}}^{j},R_{j})=\hat{j},\quad j\leq \hat{j}\leq M_j,
\end{align*}
and for all $j\in\mathbb{N}$, the sequences of $\hat{j}$-tuples $((x_{1}^{n})^{j},\cdots,(x_{\hat{j}}^{n})^{j})_{n}$, consisting of the $\hat{j}$ points around which $u_{k_{n}}^{j}$ is emerging and that are contained in $B(0,R_j)$, are converging as $n\to\infty$.

Consider the sequence $(v_{n})_{n}$, where
\[
v_{n}:=u_{k_{n}}^{n}.
\]
By construction, $(v_{n})_{n}$ is a subsequence of $(u^{j}_{{k}_{n}})_{n}$ for all $j$; moreover the sequences of points, around which the functions of $(v_{n})_{n}$ are emerging, are converging as $n$ goes to infinity and their limit points have interdistances greater or equal than $2R^*-\sigma_0$ and make up an unbounded numerable subset of $\mathbb{R}^{N}$.
According to \eqref{ead} and Corollary \ref{cor3.2},
we have $\|v_{n}\|_{H^1(B(0,R_j))}\leq C_j$ for any  $j\in\mathbb{N}$. Considering that $R_{j}\to+\infty$ as $j\to+\infty$ and that, for all $n$, $v_{n}$ solves \eqref{1.1}, we can infer that, up to a subsequence, $(v_{n})_{n}$ uniformly converges on every compact set of $\mathbb{R}^{N}$ and that the limit function is a solution of \eqref{1.1} which has at least $j$ emerging parts around points belonging to $B(0,R_j)$.
\end{proof}

 \vskip .1in
 \noindent{\bf Conflict of Interest.}
 On behalf of all authors, the corresponding author states that there is no conflict of interest.
 \vskip .1in
 \noindent{\bf Data Availability.}
 No datasets were generated or analysed during the current study.
 
\vskip .1in
 \noindent{\bf Acknowledgement.} 
 The  research was supported by   NSFC-12371107, NSFC-11901582.
  
\vspace{0.4cm}
\bibliographystyle{abbrv}  

\end{document}